\newtheorem{thm}{Theorem}[section]
\newtheorem{lemma}[thm]{Lemma}
\newtheorem{prop}[thm]{Proposition}
\newtheorem{cor}[thm]{Corollary}
\theoremstyle{definition}
\newtheorem{defn}[thm]{Definition}
\newtheorem{exmp}[thm]{Example}
\theoremstyle{remark}
\newtheorem{rem}[thm]{Remark}
\let\c@equation\c@thm
\numberwithin{equation}{section}
\title[Hypersurfaces of Prescribed Curvature ]{Lipschitz Continuous Hypersurfaces with Prescribed Curvature and Asymptotic Boundary in Hyperbolic Space}
\author{Zhenan Sui}
\address{Institute for Advanced Study in Mathematics of HIT, Harbin Institute of Technology, Harbin, China}
\email{sui.4@osu.edu}
\author{Wei Sun}
\address{Institute of Mathematical Sciences, ShanghaiTech University, Shanghai, China}
\email{sunwei@shanghaitech.edu.cn}
\begin{document}

\begin{abstract}
We prove the existence of a complete locally Lipschitz continuous hypersurface in weak sense with prescribed Weingarten curvature and asymptotic boundary at infinity in hyperbolic space under certain assumptions.
\end{abstract}

\subjclass[2010]{Primary 53C21; Secondary 35J65, 58J32}

\maketitle


\section {\large Introduction}

\vspace{4mm}

This paper is devoted to the study of asymptotic Plateau type problem in hyperbolic space, for which,
we shall use the half space model
\[\mathbb{H}^{n+1} = \{ (x, x_{n+1}) \in \mathbb{R}^{n+1} \big\vert x_{n+1} > 0\}\]
with the metric
\[ d s^2 = x_{n+1}^{- 2} \sum_{i = 1}^{n+1} d x_i^2. \]
Given a smooth positive function $\psi$ in $\mathbb{H}^{n + 1}$ and a disjoint collection of smooth closed $(n - 1)$ dimensional submanifolds $\Gamma = \{\Gamma_1, \ldots, \Gamma_m\}$ at $\partial_{\infty} \mathbb{H}^{n+1} = \mathbb{R}^n \times \{0\}$, we want to find a complete connected admissible vertical graph $\Sigma = \{ (x, u(x) ) | x \in \Omega\}$ satisfying
\begin{equation} \label{eqn9}
\left\{ \begin{aligned}
f ( \kappa [ u ] ) =  & \,\sigma_k^{\frac{1}{k}} ( \kappa ) = \psi(x, u) \quad & \mbox{in} \,\, \Omega, \\
u  = & \,  0 \quad & \mbox{on} \,\, \Gamma,
\end{aligned} \right.
\end{equation}
where $\kappa = (\kappa_1, \ldots, \kappa_n)$ are the hyperbolic principal curvatures of $\Sigma$ with respect to the upward normal, the $k$th-Weingarten curvature
\[\sigma_k (\kappa) = \sum_{1 \leq i_1 < \ldots < i_k \leq n} \kappa_{i_1} \cdots \kappa_{i_k} \]
is defined on $k$-th G\r arding's cone
\[\Gamma_k \equiv \{ \kappa \in \mathbb{R}^n \vert \sigma_j(\kappa) > 0,\, j = 1, \ldots, k \},\]
and $\Omega$ is the bounded domain enclosed by $\Gamma$ on $\mathbb{R}^n \times \{ 0 \}$.
We say $\Sigma$ is admissible if $\kappa \in \Gamma_k$.

The difficulty for Plateau type problem \eqref{eqn9} lies in the singularity at $\Gamma$. A common method to deal with such problem is by studying approximating Dirichlet problem
\begin{equation} \label{eqn11}
\left\{ \begin{aligned} f ( \kappa [ u ] ) =  & \, \psi(x, u) \quad & \mbox{in} \,\, \Omega, \\
u   = & \, \epsilon \quad & \mbox{on} \,\, \Gamma,
\end{aligned} \right.
\end{equation}
where $\epsilon$ is a small positive constant. When $\psi = \sigma \in (0, 1)$ is a prescribed constant and $f$ satisfies certain assumptions, extensive study can be found in \cite{GS00, GSS09, GS10, GS11, GSX14}, where the estimates for solutions to \eqref{eqn11} have to be $\epsilon$-independent in order to prove existence results for asymptotic problem \eqref{eqn9}.
For nonconstant $\psi$,  Szapiel \cite{Sz05} investigated the existence of strictly locally convex solutions to the approximating problem \eqref{eqn11}.

In \cite{Sui2019}, the author constructed a new approximating Dirichlet problem by assuming the existence of a strictly locally convex asymptotic subsolution. Combined with interior estimates for the case $k = 2$, existence results can be concluded for strictly locally convex solutions to asymptotic problem \eqref{eqn9}, even when the estimates for the approximating problem depend on $\epsilon$. In this paper, we shall continue to adopt this idea to find admissible hypersurfaces.

Assume that there exists an admissible $\underline{u} \in C^4(\Omega) \cap C^0(\overline{\Omega})$ such that
\begin{equation} \label{eqn8}
\left\{ \begin{aligned}
f(\kappa [ \underline{u} ]) \geq  & \, \psi(x, \underline{u}) \quad & \mbox{in} \,\, \Omega, \\
\underline{u}  = & \, 0 \quad  & \mbox{on} \,\, \Gamma.
\end{aligned} \right.
\end{equation}
Denote the $\epsilon$-level set of $\underline{u}$ and its enclosed region in $\mathbb{R}^n$ by
\[ \Gamma_{\epsilon} =  \{ x \in \Omega \,\big\vert\, \underline{u}(x)  = \epsilon\}, \quad \quad
\Omega_{\epsilon} = \{ x \in \Omega \,\big\vert\, \underline{u}(x)  > \epsilon\}. \]
We assume that $\Gamma_{\epsilon}$ is a regular boundary of $\Omega_{\epsilon}$ when $\epsilon > 0$ is sufficiently small. That is to say, $\Gamma_{\epsilon}$ has dimension $n - 1$, $\Gamma_{\epsilon} \in C^4$ and $\underline{u}_{\gamma} = |D \underline{u}| > 0$ on $\Gamma_{\epsilon}$, where $\gamma$ is the unit interior normal vector field to $\Gamma_{\epsilon}$ on $\Omega_{\epsilon}$. Here the requirement for $\underline{u}$ to be $C^4$ is for second order boundary estimate.
Throughout this paper, we shall consider the following approximating Dirichlet problem
\begin{equation} \label{eqn10}
\left\{\begin{aligned}
f ( \kappa [ u ] ) =  & \, \psi(x, u) \quad & \mbox{in} \,\, \Omega_{\epsilon}, \\
u  = & \, \epsilon \quad & \mbox{on} \,\, \Gamma_{\epsilon}.
\end{aligned} \right.
\end{equation}

Before we state our main theorems, let us first impose some compatibility conditions, which are needed for boundary gradient estimate on $\Gamma_{\epsilon}$.
For any $\epsilon > 0$ sufficiently small, let $\sigma \in (0, 1)$ be a constant which satisfies
\begin{equation} \label{eqn19}
\psi(x, u)  >  \sigma_k^{\frac{1}{k}} ( \sigma, \ldots, \sigma )  \quad\quad\mbox{on} \quad \overline{\Omega_{\epsilon}}
\end{equation}
for any admissible solution $u \geq \underline{u}$ to \eqref{eqn10}.
Note that such $\sigma$ exists in view of Remark \ref{Remark1}, and $\sigma$ may depend on $\epsilon$. Denote by $r_0^{\epsilon}$ the maximal radius of exterior spheres to $\Gamma_{\epsilon}$ in $\mathbb{R}^n$.
We impose the following compatibility conditions for \eqref{eqn10} and $\underline{u}$:
\begin{equation} \label{eq1-4}
0 < \epsilon < r_0^{\epsilon} \sigma \quad \mbox{and} \quad
\sigma - \frac{\sqrt{1 - \sigma^2}}{r_0^{\epsilon}} \,\epsilon - \frac{1 + \sigma}{(r_0^{\epsilon})^2}\,\epsilon^2 > 0.
\end{equation}
We note that the compatibility conditions are mild and can embrace the case when $\psi$ approaches $0$ on $\Gamma$, at which problem \eqref{eqn9} becomes both singular and degenerate.

Our first result is on the existence of admissible hypersurfaces to approximating problem \eqref{eqn10}.
\begin{thm} \label{Theorem1}
Suppose that $0 < \psi(x, u) \in C^{\infty} (\mathbb{H}^{n + 1})$ satisfies
\begin{equation} \label{eq1-2}
\psi_u - \frac{\psi}{u} \geq 0,
\end{equation}
and there exists an admissible $\underline{u} \in C^4(\Omega) \cap C^0(\overline{\Omega})$ satisfying \eqref{eqn8} and
\begin{equation} \label{eq1-3}
- \lambda (D^2 \underline{u}) \in \Gamma_{k + 1}  \quad  \mbox{near} \,\, \Gamma.
\end{equation}
For sufficiently small constant $\epsilon > 0$, assume that the compatibility conditions \eqref{eq1-4} hold for \eqref{eq3-12}, \eqref{eq3-13} and $\underline{u}$. Then there exists a unique smooth admissible solution $u^{\epsilon} \geq \underline{u}$ to the approximating problem \eqref{eqn10} in $\overline{\Omega_{\epsilon}}$. When $k = n$, condition \eqref{eq1-2}, \eqref{eq1-3} and \eqref{eq1-4} can be removed and our conclusion remains true except the uniqueness.
\end{thm}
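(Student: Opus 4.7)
The plan is to prove Theorem \ref{Theorem1} by the method of continuity, which reduces the problem to establishing a priori $C^{2,\alpha}$ estimates for admissible solutions of the approximating problem \eqref{eqn10} (together with its intermediate deformations \eqref{eq3-12}, \eqref{eq3-13}). Openness along the continuity path is standard: on admissible solutions, the operator $F(D^2u, Du, u) = \sigma_k^{1/k}(\kappa[u])$ is elliptic, and condition \eqref{eq1-2} guarantees the correct sign in the zero-order term of the linearization, so the implicit function theorem applies. Closedness is the entire content of the a priori estimates, and existence then follows by connecting the actual problem to an auxiliary one that admits $\underline{u}$ (or a direct modification of it) as a genuine solution.

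The a priori estimates are carried out in the usual four stages. For the $C^0$ estimate, the lower bound $u \geq \underline u$ follows from a comparison argument based on admissibility and condition \eqref{eq1-2}, while the upper bound is produced by a straightforward height barrier. For the gradient estimate, the interior bound follows from a maximum principle argument on a suitable auxiliary function involving $\log|Du|$ and the height $u$, adapted to the hyperbolic metric as in \cite{GS10, GSX14}. The boundary gradient estimate on $\Gamma_\epsilon$ is precisely where the compatibility conditions \eqref{eq1-4} are used: one constructs an upper barrier of the form $\epsilon + h(d)$, where $d$ is the distance to an exterior sphere of radius $r_0^\epsilon$, and the two inequalities in \eqref{eq1-4} are exactly what is required for this barrier to be admissible and to dominate the solution in a neighborhood of $\Gamma_\epsilon$.

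For the $C^2$ estimate, the double-tangential boundary derivatives are read off from the boundary data; the mixed tangential-normal derivatives are controlled by a barrier built from $\underline u$ and the distance to $\Gamma_\epsilon$; and the purely normal second derivative, which is the classical bottleneck, is handled by reducing the question to a lower bound for the tangential principal curvatures and then invoking condition \eqref{eq1-3}. The $(k+1)$-admissibility of $-\lambda(D^2 \underline u)$ near $\Gamma$ provides the required nondegeneracy in the quadratic expansion of $\sigma_k$ so that $u_{\gamma\gamma}$ can be bounded from above. Interior $C^2$ estimates are then obtained from a maximum principle argument on the largest eigenvalue of the Weingarten map, again following the hyperbolic-space adaptations in \cite{GS00, GSS09, GSX14}. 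Once uniform $C^2$ bounds are in place, the equation is uniformly elliptic and concave in the Hessian, so Evans--Krylov yields $C^{2,\alpha}$ estimates and Schauder theory bootstraps to $C^\infty$; uniqueness is immediate from the comparison principle for admissible solutions.

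The main obstacle I expect is the boundary $C^2$ estimate in the double-normal direction, since the curvature equation is singular and potentially degenerate on $\Gamma$ (as $\psi$ is allowed to vanish there), and because admissibility must be preserved by all barriers used along the way. The case $k=n$ is substantially easier: the equation becomes Monge--Ampère type, strict convexity replaces the need for \eqref{eq1-3}, and the gradient barrier construction simplifies so that \eqref{eq1-2} and \eqref{eq1-4} can be dispensed with, which explains the final sentence of the theorem.
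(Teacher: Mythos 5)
Your proposal follows the same overall strategy as the paper: continuity method reduced to a priori $C^{2,\alpha}$ estimates, built up through $C^0$, gradient, and second-order stages. A few details, however, do not match and would need to be corrected in a fleshed-out version. The existence step is more delicate than a single continuity path with the implicit function theorem: since $\underline u$ is only a subsolution of \eqref{eqn10}, the paper runs a \emph{two-step} deformation through the auxiliary problems \eqref{eq3-12} and \eqref{eq3-13}. In the first step, $\underline u$ solves the initial equation exactly and the linearization is shown to be invertible by Lemma~\ref{Lemma6-1} (which exploits that the right-hand side of \eqref{eq3-12} is a multiple of $u$, not condition \eqref{eq1-2}); in the second step, uniqueness along the path is not assumed, and existence is obtained from Li's degree theory for second-order elliptic operators on a bounded open set $\mathcal{O}$, not from the implicit function theorem alone. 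Your account of where \eqref{eq1-3} enters is also slightly off: it is used primarily in the barrier construction for the \emph{mixed} tangential-normal second derivative estimate, where $-\lambda(D^2\underline u)\in\Gamma_{k+1}$ forces the principal curvatures $\kappa'$ of $\Gamma_\epsilon$ (with respect to $\gamma$) into $\Gamma_k'$, so that the Lin--Trudinger barrier $-a_0|x|^2 + c_0(e^{-b_0 d}-1)$ can be made to satisfy $\lambda\big(u(B_{ij}-d_0\delta_{ij}) + \delta_{ij} + u_iu_j\big)\in\Gamma_{k+1}$; the double-normal estimate then reuses this barrier together with the CNS-type argument controlling the distance from $\kappa'$ to $\partial\Gamma_{k-1}'$. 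Finally, the $C^0$ upper bound is not a flat height barrier but a comparison with geodesic spheres $S^\sigma_R$ via the tangency principle in Lemma~\ref{LemmaA}, which is essential in the hyperbolic half-space model. With these corrections your plan aligns with the paper's argument.
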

The proof of Theorem \ref{Theorem1} relies on the establishment of a priori second order estimates for admissible solutions $u \geq \underline{u}$ of \eqref{eqn10}. These estimates depend on $\epsilon$, which is the minimum of $u$ on $\overline{\Omega_{\epsilon}}$, and thus we are able to apply techniques for usual Dirichlet problems. For the special case $k = n$, we refer the readers to \cite{Sui2019}, where the estimates can be derived by property of strict local convexity. For general $k$, we adopt the idea of Guan-Spruck \cite{GS10} to derive $C^0$ estimate and boundary gradient estimate, which rely directly and inevitably on the geometry of hyperbolic space.
For global curvature estimate, we construct a test function making use of geometric quantities in half space model, which, easily brings us to derivations similar to Jin-Li \cite{JL05}, where they used spherical coordinates to find starshaped compact radial graphs.

For second order boundary estimate, we shall generalize the idea of Ivochkina, Lin and Trudinger \cite{Ivochkina89, Lin-Trudinger94} to hyperbolic space.
For barrier construction, following \cite{Ivochkina89, Lin-Trudinger94}, we need
to guarantee that the principal curvatures $\kappa' = (\kappa'_1, \ldots, \kappa'_{n - 1})$ of $\Gamma_{\epsilon}$ with respect to $\gamma$ satisfy
$\kappa' \in \Gamma'_k$ on $\Gamma_{\epsilon}$, where
\[ \Gamma'_{k} \equiv \{ \kappa' \in \mathbb{R}^{n - 1} \vert \sigma_j(\kappa') > 0,\, j = 1, \ldots, k \}. \]
By the relation
\[ D_{\alpha \beta} \underline{u} = - \underline{u}_{\gamma} \kappa'_{\alpha} \delta_{\alpha \beta} \quad \text{ on } \Gamma_{\epsilon}, \]
where $\alpha$, $\beta < n$ run over the principal directions on $\Gamma_{\epsilon}$,
we observe that $\lambda( D_{\alpha \beta} \underline{u})$ and $\kappa'$ differ by a negative sign, which is very different from the Euclidean space where we would have $\underline{u}_{\gamma} \leq 0$. Therefore, we add condition \eqref{eq1-3}. As a result, we find two big differences due to the hyperbolic space and our problem setting. First, condition \eqref{eq1-3} can not be weakened to $- \lambda (D^2 \underline{u}) \in \Gamma_{k}$ as in Euclidean space \cite{Ivoch-Lin-Tru96}. Second, our second order boundary estimate depends on $\inf_{\Omega_{\epsilon}} \psi$, while in the Euclidean space \cite{Lin-Trudinger94}, it can be independent.

To solve the asymptotic problem \eqref{eqn9}, we utilize the interior gradient estimate to give a $\epsilon$-independent $C^1$ bound for solution sequence  $u^{\epsilon}$ of \eqref{eqn10} with $0 < \epsilon < \frac{\epsilon_0}{2}$ on fixed $\overline{\Omega_{\epsilon_0}}$. By diagonal process, we can then prove the existence of a locally Lipschitz continuous hypersurface to \eqref{eqn9} in weak sense.
At this point, we mention that condition \eqref{eq1-2} is indispensable in $\mathbb{H}^{n + 1}$ when $k < n$. It is needed for both global gradient estimate to \eqref{eqn10} and interior gradient estimate (see Weng \cite{Weng2019}).
In $\mathbb{R}^{n + 1}$, condition \eqref{eq1-2} becomes $\psi_u \geq 0$, which was used by Wang \cite{WangXJ98} to obtain interior gradient estimate. Before we state the existence theorem to problem \eqref{eqn9}, we give an example of what our data can be.
\begin{exmp}
Let
\[\Omega = \big\{ x \in \mathbb{R}^n \big\vert \, |x| < (1 - \sigma_1^2)^{\frac{1}{2}}  R \big\}, \]
where $\sigma_1 \in (0, 1)$ and $R > 0$ are constants. Let
$\psi = \alpha u^2$, where
\[ \alpha = \frac{\sigma_k^{\frac{1}{k}} (\sigma_1, \ldots, \sigma_1)}{(1 - \sigma_1)^2 R^2}. \]
Choose $\underline{u} = \sqrt{R^2 - |x|^2} - \sigma_1 R$. It will become clear in section 2 that
$\kappa[\underline{u}] = (\sigma_1, \ldots, \sigma_1)$. For any $0 < \epsilon < (1 - \sigma_1) R$, we may pick
\[ \sigma  =  \frac{\sigma_1 \epsilon^2}{2 (1 - \sigma_1)^2 R^2}. \]
Note that $r_0^{\epsilon} = \infty$. We can verify that all the assumptions in Theorem \ref{Theorem1} are satisfied.
\end{exmp}

\begin{thm} \label{Theorem2}
Under the assumptions of Theorem \ref{Theorem1}, there exists a sequence of admissible solution $u^{\epsilon} \in C^{\infty}(\overline{\Omega_{\epsilon}})$ to \eqref{eqn10} such that $\epsilon \searrow 0$ and $u^{\epsilon}$ converges to $u \in C^{0, 1}_{loc}(\Omega) \cap C^0(\overline{\Omega})$. We call $u$ weak admissible solution to asymptotic Plateau problem \eqref{eqn9}. When $k = n$, condition \eqref{eq1-2}, \eqref{eq1-3} and \eqref{eq1-4} can be removed and our conclusion remains true. When $k = n = 2$ or $k = 1$, $u$ can be smooth.
\end{thm}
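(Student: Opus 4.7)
The plan is to construct $u$ as the locally uniform limit of a subsequence of the approximating solutions supplied by Theorem \ref{Theorem1}. For each sufficiently small $\epsilon > 0$, Theorem \ref{Theorem1} furnishes a smooth admissible $u^\epsilon \in C^\infty(\overline{\Omega_\epsilon})$ with $u^\epsilon \geq \underline{u}$ solving \eqref{eqn10}. I fix an auxiliary scale $\epsilon_0 > 0$ and restrict attention to the family $\{u^\epsilon : 0 < \epsilon < \epsilon_0/2\}$ on the fixed compact subdomain $\overline{\Omega_{\epsilon_0}}$, as foreshadowed in the introduction.

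The next step is an $\epsilon$-independent $C^1$ bound for this family on $\overline{\Omega_{\epsilon_0}}$. The pointwise lower bound $u^\epsilon \geq \underline{u}$ is immediate from Theorem \ref{Theorem1}. For a uniform upper bound I would compare $u^\epsilon$ against a fixed geometric supersolution over $\Omega$ that vanishes on $\Gamma$ (for instance a suitable spherical cap sitting over $\partial_{\infty} \mathbb{H}^{n+1}$ in the half-space model), using hypothesis \eqref{eq1-2} to ensure the maximum principle for the $\sigma_k^{1/k}$ operator applies to admissible functions. With $C^0$ control in hand, the crucial input is the interior gradient estimate for curvature equations in $\mathbb{H}^{n+1}$ due to Weng \cite{Weng2019}, whose structural assumption is exactly \eqref{eq1-2}; this produces a bound
\[ \sup_{\overline{\Omega_{\epsilon_0}}} |D u^\epsilon| \leq C(\epsilon_0) \]
independent of $\epsilon < \epsilon_0/2$.

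Arzel\`a--Ascoli on $\overline{\Omega_{\epsilon_0}}$, followed by a standard diagonal procedure through $\epsilon_0 \searrow 0$, then extracts a subsequence (still denoted $u^\epsilon$) converging locally uniformly on $\Omega$ to a limit $u \in C^{0,1}_{loc}(\Omega)$ with $u \geq \underline{u}$. To see that $u \in C^0(\overline{\Omega})$ with $u|_\Gamma = 0$, I sandwich each $u^\epsilon$ between $\underline{u}$ and the geometric supersolution constructed above: both vanish continuously along $\Gamma$ by \eqref{eqn8} and by choice, the sandwich passes to the limit, and this forces $u \to 0$ as $x \to \Gamma$.

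For the special cases: when $k = n$ the strictly locally convex framework of \cite{Sui2019} applies directly and makes \eqref{eq1-2}, \eqref{eq1-3}, \eqref{eq1-4} unnecessary; when $k = 1$ or $k = n = 2$, interior $C^2$ estimates for the corresponding curvature equation in $\mathbb{H}^{n+1}$ are classical, and Evans--Krylov plus Schauder bootstrapping upgrade the $C^{0,1}_{loc}$ limit to a smooth solution of \eqref{eqn9} on $\Omega$. The main obstacle in the general case is the interior gradient estimate: everything else is a routine compactness and barrier argument. This is precisely why hypothesis \eqref{eq1-2} is indispensable when $k < n$, and why for such $k$ the conclusion must be weakened to a locally Lipschitz (weak admissible) solution rather than a smooth one.
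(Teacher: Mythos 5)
Your proposal tracks the paper's proof: Theorem \ref{Theorem1} gives $u^\epsilon$, the interior gradient estimate of Section 6.1 (which the paper proves directly, noting it is close in spirit to Weng \cite{Weng2019}) combined with the uniform $C^0$ bound from Lemma \ref{lemma5-1} yields an $\epsilon$-independent $C^1$ bound on each fixed $\overline{\Omega_{\epsilon_0}}$, and Arzel\`a--Ascoli plus a diagonal extraction produce the $C^{0,1}_{loc}$ limit, with boundary continuity controlled by the spherical barriers of Lemmas \ref{lemma5-1} and \ref{LemmaB} and the special cases $k=n$, $k=n=2$, $k=1$ handled exactly as you describe. One small misattribution: the $C^0$ upper bound rests on the geometric comparison Lemma \ref{LemmaA} against spherical caps (and the boundary modulus on a \emph{family} of exterior spheres at each point of $\Gamma$, not a single cap over $\Omega$), so condition \eqref{eq1-2} is not needed there; it enters only in the gradient estimate and the comparison/uniqueness theorem.
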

Our definition of weak admissible solution may be interpreted in the spirit of Trudinger \cite{Trudinger97}, which was originally defined for Hessian equations. In Section 7, we shall prove that $u$ is indeed a viscosity solution to \eqref{eqn9}, as defined in \cite{Trudinger90}. In \cite{Sui2019}, the author applied Guan-Qiu's idea \cite{GQ17} to derive interior $C^2$ estimate for strictly locally convex solutions when $k = 2$. As a result, smooth solution to asymptotic problem \eqref{eqn9} can be found in the special case $k = n = 2$. However, interior $C^2$ estimate cannot be derived for higher order Weingarten curvature equations ($k \geq 3$) in view of the counterexamples given by Pogorelov \cite{Po78} and Urbas \cite{Ur90}. Thus, in Section 6, we formulate some possible domains on which we wish to establish Pogorelov type interior curvature estimate, but then we find an obstruction due to the hyperbolic space. Therefore, we wish to answer the following questions in future work: whether there exists a non-smooth locally Lipschitz continuous viscosity solution to \eqref{eqn9} when there is an asymptotic subsolution. If so, what is the optimal regularity of our weak admissible solution $u$.

The rest of this paper is organized as follows: the proof of Theorem \ref{Theorem1} is covered in Section 2--5. Combined with interior gradient estimate in Section 6, we finish the proof of Theorem \ref{Theorem2}.

\vspace{4mm}

\section{ \large $C^1$ estimate}

\vspace{4mm}

First, we shall present some preliminary knowledge which may be found in \cite{GSS09, GS10, GS11, GSX14, Sui2019}. The coordinate vector fields on vertical graph of $u$ are given by
\[  \partial_i + u_i \partial_{n + 1}, \quad i = 1, \ldots, n, \]
where $\partial_{i} = \frac{\partial}{\partial x_{i}}$ for $i = 1, \ldots, n + 1$ are the coordinate vector fields in $\mathbb{R}^{n+1}$.

When $\Sigma = \{ (x, u(x)) | x \in \Omega \}$ is viewed as a hypersurface in $\mathbb{R}^{n + 1}$, its upward unit normal,  metric, inverse of the metric and second fundamental form are respectively
\[\nu = \frac{1}{w} ( - D u, 1 ), \quad w = \sqrt{ 1 + |D u |^2},  \]
\[ \tilde{g}_{ij} = \delta_{ij} + u_i u_j, \quad  \tilde{g}^{ij} =  \delta_{ij} - \frac{u_i u_j}{w^2},  \quad  \tilde{h}_{ij} = \frac{u_{ij}}{w}. \]
The Euclidean principal curvatures $\tilde{\kappa}$ are the eigenvalues of the symmetric matrix
\[ \tilde{a}_{ij} = \frac{1}{w} \gamma^{ik} u_{kl} \gamma^{lj} \quad \mbox{with} \,\,
 \gamma^{ik} = \delta_{ik} - \frac{u_i u_k}{w ( 1 + w )}, \quad
 \gamma_{ik} = \delta_{ik} + \frac{u_i u_k}{1 + w}. \]
Note that $\gamma^{ik} \gamma_{kj} = \delta_{ij}$ and $\gamma_{ik} \gamma_{kj} = \tilde{g}_{ij}$.

When $\Sigma = \{ (x, u(x)) | x \in \Omega \}$ is viewed as a hypersurface in $\mathbb{H}^{n + 1}$, its unit upward normal, metric, second fundamental form are given as follows
\[ {\bf n} = u \nu, \quad  g_{ij} = \frac{1}{u^2} ( \delta_{ij} + u_i u_j ), \quad
 h_{ij} = \frac{1}{u^2 w} ( \delta_{ij} + u_i u_j + u u_{ij} ). \]
The hyperbolic principal curvatures $\kappa [u]$ are the eigenvalues of the symmetric matrix $A [u] = \{ a_{ij} \}$, whose entries are given by
\[ a_{ij} =  u^2 \gamma^{ik} h_{kl} \gamma^{lj}
       = \frac{1}{w} \gamma^{ik} ( \delta_{kl} + u_k u_l + u u_{kl} ) \gamma^{lj} =  \frac{1}{w} ( \delta_{ij} + u \gamma^{ik} u_{kl} \gamma^{lj} ). \]
Equation \eqref{eqn9} can be written as
\begin{equation} \label{eq1-1}
  f( \kappa [ u ] ) = f( \lambda( A[ u ] )) = F( A[ u ] ) =  \psi(x, u).
\end{equation}

From the above discussion, we obtain the following relations.
\begin{equation} \label{eq0-3}
h_{ij} = \frac{1}{u} \tilde{h}_{ij} + \frac{\nu^{n+1}}{u^2} \tilde{g}_{ij},
\end{equation}
where $\nu^{n+1} = \nu \cdot \partial_{n + 1}$ and $\cdot$ is the inner product in $\mathbb{R}^{n+1}$. Note that this formula indeed holds for any local frame on any hypersurface $\Sigma$
which may not be a graph. In addition, we have
\begin{equation} \label{eqn14}
\kappa_i =  u \tilde{\kappa_i} + \nu^{n+1},  \quad i = 1, \ldots, n.
\end{equation}

In the rest of this section and section 3, 4, we will establish $C^2$ a priori estimate for admissible solutions $u \geq \underline{u}$ to approximating problem \eqref{eqn10}. Our estimate will depend on $\epsilon$.

We shall need the following type of maximum principle in hyperbolic space, which originally appears in \cite{Sz05}.

\begin{lemma} \label{LemmaA}
Let $\Omega' \subset \Omega$ be a domain and $u$, $v$ be positive $C^2$ functions on $\Omega'$, where $u$ is admissible and $\kappa[v] \in \overline{\Gamma_k}$. Assume that $f(\kappa[ v ]) < f(\kappa[ u ])$ in $\Omega'$. If $u - v$ has a local maximum at $x_0 \in \Omega'$, then $u(x_0) \neq v(x_0)$.
\end{lemma}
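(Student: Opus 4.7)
The plan is to argue by contradiction, reducing the statement to a matrix ordering on $A[u]$ and $A[v]$ at $x_0$ and then invoking ellipticity of $F$ on the admissible cone.

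Suppose $u(x_0) = v(x_0)$. Since $x_0$ is an interior local maximum of $u - v$, the first- and second-order conditions give $Du(x_0) = Dv(x_0)$ and $D^2 u(x_0) \le D^2 v(x_0)$ as symmetric matrices. Because $u$ and $v$ agree to first order at $x_0$, the scalar $w$ and the matrix $\gamma^{ik}$ appearing in
\[ a_{ij} = \frac{1}{w}\bigl(\delta_{ij} + u\,\gamma^{ik} u_{kl}\,\gamma^{lj}\bigr) \]
are identical whether computed from $u$ or from $v$ at $x_0$. Using $u(x_0) > 0$ and the fact that conjugation by the symmetric matrix $\gamma$ preserves the semidefinite order, one obtains
\[ A[u](x_0) - A[v](x_0) \;=\; \frac{u(x_0)}{w}\,\gamma\bigl(D^2 u - D^2 v\bigr)(x_0)\,\gamma \;\le\; 0. \]

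To convert this matrix inequality into a contradiction with $f(\kappa[v])(x_0) < f(\kappa[u])(x_0)$, I would invoke the monotonicity of $F$ on the admissible cone. Since $\kappa[u](x_0) \in \Gamma_k$ and $\kappa[v](x_0) \in \overline{\Gamma_k}$, convexity and openness of the G\r arding cone ensure that the convex combination $A_t := (1-t) A[u](x_0) + t A[v](x_0)$ remains admissible for every $t \in [0,1)$. The fundamental theorem of calculus then yields
\[ F(A[v]) - F(A[u]) \;=\; \int_0^1 F^{ij}(A_t)\,\bigl(A[v] - A[u]\bigr)_{ij}\,dt \;\ge\; 0 \]
at $x_0$, because $F^{ij}$ is positive semidefinite along the admissible segment while $A[v] - A[u] \ge 0$ by the previous display. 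Hence $f(\kappa[v])(x_0) \ge f(\kappa[u])(x_0)$, which contradicts the hypothesis $f(\kappa[v]) < f(\kappa[u])$.

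The only mild technicality I anticipate is the step showing $A_t$ stays admissible: this follows because $A[u](x_0)$ lies in the open matrix G\r arding cone while $A[v](x_0)$ lies in its closure, and for any open convex set $C$, points of the form $(1-t)x + ty$ with $x \in C$ and $y \in \overline{C}$ belong to $C$ for all $t \in [0,1)$. Once that is in hand, the whole argument is essentially linear-algebraic and the hyperbolic ingredients enter only through the explicit form of the matrix $A[u]$.
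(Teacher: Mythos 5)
Your argument is correct and matches the paper's proof step for step: argue by contradiction, use the first- and second-order conditions at the interior local maximum to get $A[u](x_0)\le A[v](x_0)$, and conclude $f(\kappa[u])(x_0)\le f(\kappa[v])(x_0)$ by monotonicity of $F$ on the G\aa rding cone, contradicting the hypothesis. The only difference is that you spell out the monotonicity step in detail (via the segment $A_t=(1-t)A[u]+tA[v]$ staying admissible for $t\in[0,1)$ and the sign of $F^{ij}(A_t)(A[v]-A[u])_{ij}$), whereas the paper simply asserts the resulting inequality.
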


\begin{proof}
Prove by contradiction. Suppose that $u(x_0) = v(x_0)$. By assumption we know that $D u (x_0) = D v (x_0)$ and $D^2 u(x_0) \leq D^2 v(x_0)$. Therefore at $x_0$,
\[ A [ u ] = \frac{1}{w} \big( \delta_{ij} + u \gamma^{ik} u_{kl} \gamma^{lj} \big) \leq  \frac{1}{w} \big( \delta_{ij} + v \gamma^{ik} v_{kl} \gamma^{lj} \big) = A[ v ]. \]
Consequently, $f(\kappa[ u ]) ( x_0 ) \leq f(\kappa[ v ]) ( x_0 )$. This is a contradiction.
\end{proof}

\subsection{$C^0$ estimate}

\vspace{2mm}

For $\sigma \in [0, 1)$, let $B^{\sigma} = B_R^{\sigma} =  B^{\sigma}_R(a)$ be a ball in $\mathbb{R}^{n+1}$ of radius $R$ centered at $a = (a', \,- \sigma R)$ and $S^{\sigma} = S_R^{\sigma} = \partial B_R^{\sigma} \cap \mathbb{R}^{n+1}_+$. By \eqref{eqn14}, we know that $\kappa_i[S^{\sigma}] = \sigma$ for all $i$ with respect to its outward normal.

\begin{lemma} \label{lemma5-1}
There exists a ball $B^{\sigma}_{R}(a)$ such that for any admissible solution $u \geq \underline{u}$ to \eqref{eqn10}, the graph $\Sigma^{\epsilon} = \{ (x, u(x)) | x \in \Omega_{\epsilon} \}$ is contained in $B^{\sigma}_{R}(a)$.
\end{lemma}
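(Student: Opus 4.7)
The plan is to bound the graph of $u$ from above by the upper cap of a single fixed Euclidean ball of the required form $B^{\sigma}_R(a)$, using the parameter $\sigma$ supplied by \eqref{eqn19} as the common hyperbolic curvature of that cap. I pick $a' \in \mathbb{R}^n$ (for example the center of a minimal Euclidean ball containing $\Omega$) and then take $R > 0$ so large that both the base disk $\{|x - a'| \leq R\sqrt{1-\sigma^2}\}$ strictly contains $\overline{\Omega}$ and the spherical cap
\[
v(x) \,=\, -\sigma R + \sqrt{R^2 - |x - a'|^2}
\]
satisfies $v > \epsilon$ on all of $\overline{\Omega}$. Both requirements reduce to taking $R$ larger than an explicit threshold depending only on $\sigma$, $\epsilon$, and the Euclidean size of $\Omega$, independent of $u$. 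Setting $a = (a', -\sigma R)$, the graph of $v$ is exactly the upper hemisphere $S^{\sigma}$ of $B^{\sigma}_R(a)$, and the relation \eqref{eqn14} together with $\tilde{\kappa}_i = -1/R$ for the upward-facing cap gives $\kappa[v] \equiv (\sigma, \dots, \sigma)$, so $f(\kappa[v]) = \sigma_k^{1/k}(\sigma,\dots,\sigma)$.

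I then aim to show $u \leq v$ on $\overline{\Omega_\epsilon}$, which is exactly the assertion $\Sigma^{\epsilon} \subset B^{\sigma}_R(a)$. Suppose for contradiction that $u - v$ attains a positive maximum at some $x_0 \in \overline{\Omega_\epsilon}$. Since $u \equiv \epsilon < v$ on $\Gamma_\epsilon$, the maximum cannot occur on $\Gamma_\epsilon$; hence $x_0$ is an interior point of $\Omega_\epsilon$, and the standard first and second derivative tests yield $Du(x_0) = Dv(x_0)$, $D^2 u(x_0) \leq D^2 v(x_0)$, and $u(x_0) > v(x_0) > 0$.

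From this point I adapt the computation behind the proof of Lemma~\ref{LemmaA}. Direct calculation shows $D^2 v$ is strictly negative definite because $v$ is a spherical cap; the matrix $\gamma = (\gamma^{ij})$ depends only on $Du$, so it coincides for $u$ and $v$ at $x_0$, as does $w$. Combining $u_{kl}(x_0) \leq v_{kl}(x_0)$ with $u(x_0) \geq v(x_0) > 0$ and the negative semidefiniteness of $\gamma^{ik} v_{kl} \gamma^{lj}$ (a congruence transform of a negative definite matrix), one obtains
\[
u\,\gamma^{ik} u_{kl}\gamma^{lj} \,\leq\, u\,\gamma^{ik} v_{kl}\gamma^{lj} \,\leq\, v\,\gamma^{ik} v_{kl}\gamma^{lj} \qquad \text{at } x_0,
\]
so that $A[u](x_0) \leq A[v](x_0)$ as symmetric matrices. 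Both matrices lie in $\Gamma_k$, so monotonicity of $F = \sigma_k^{1/k}$ on the G\r arding cone together with Weyl's inequality yields $\psi(x_0, u(x_0)) = f(\kappa[u])(x_0) \leq f(\kappa[v])(x_0) = \sigma_k^{1/k}(\sigma,\dots,\sigma)$, contradicting \eqref{eqn19}. The main obstacle I anticipated was that upward vertical translation in the half-space is not a hyperbolic isometry, so a naive sliding family $\{v+t\}_{t\geq 0}$ does not preserve the hyperbolic curvature $\sigma$ and even loses admissibility once $t > \sigma R$; the argument above sidesteps this by working with a single fixed cap and exploiting the strict Euclidean concavity of $v$ to push the matrix comparison through even when $u(x_0) > v(x_0)$ strictly.
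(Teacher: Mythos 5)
Your proof is correct, and it takes a genuinely different route from the paper's. The paper compares $\Sigma^{\epsilon}$ with the equidistant cap $S^{\sigma}$ by a sliding argument: it dilates $B^{\sigma}_R(a)$ homothetically from the boundary point $(a',0)$ — such dilations are hyperbolic isometries, so the slid caps still have $\kappa \equiv \sigma$ — and then applies Lemma~\ref{LemmaA} verbatim at the first point of contact, where the two graphs automatically have equal height. You instead fix a single cap $v$ once and for all, and push the matrix comparison through even when the maximum of $u-v$ is strictly positive, by exploiting the strict Euclidean concavity of the cap: since $\gamma^{ik}v_{kl}\gamma^{lj}$ is negative definite and $u(x_0) \geq v(x_0) > 0$, multiplying by the larger scalar only helps, giving $A[u](x_0)\leq A[v](x_0)$. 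This is a genuine strengthening of the pointwise computation behind Lemma~\ref{LemmaA} (which only covers $u(x_0)=v(x_0)$) to the case of a concave comparison function, and it lets you dispense with the sliding family entirely. Your closing remark about vertical translation not being an isometry is correct but slightly beside the point: the paper avoids vertical translations precisely by using homothetic dilation from a boundary point, which \emph{is} an isometry, so the sliding approach is not actually blocked; your fixed-cap comparison is an alternative, not a rescue. What each approach buys: the paper's sliding argument is shorter and reuses Lemma~\ref{LemmaA} without modification, while yours is more self-contained at the cost of re-deriving (and slightly extending) the pointwise estimate, and it makes explicit the quantitative role of the cap's concavity.
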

\begin{proof}
Let $\sigma \in [0, 1)$ be a constant satisfying \eqref{eqn19}.
Since $\Gamma_{\epsilon} \times \{ \epsilon \}$ is compact, we can choose a ball $B^{\sigma}_R (a)$ such that $\Gamma_{\epsilon} \times \{ \epsilon \} \subset B^{\sigma}_R(a)$. Let $\Sigma^{\epsilon}$ be an admissible hypersurface to \eqref{eqn10}. Suppose $\Sigma^{\epsilon}$ is not contained in $B^{\sigma}_R(a)$. Expand $B^{\sigma}$ continuously by homothetic dilation from $(a', 0)$ until $B^{\sigma}$ contains $\Sigma^{\epsilon}$ and then reverse the procedure until $S^{\sigma}$ has a first contact with $\Sigma^{\epsilon}$. However,  $S^{\sigma}$ and $\Sigma^{\epsilon}$ can not have a first contact by Lemma \ref{LemmaA}. Hence $\Sigma^{\epsilon} \subset B^{\sigma}_R (a)$.
\end{proof}

\begin{rem}  \label{Remark1}
We can indeed obtain the $C^0$ estimate
\[ \epsilon \leq \underline{u} \leq u \leq C_0 \quad \text{on  } \overline{\Omega_{\epsilon}} \]
for any admissible solution $u \geq \underline{u}$ to \eqref{eqn10}, where $\epsilon > 0$ is any sufficiently small constant, $\psi$ is any prescribed positive function on $\overline{\Omega_{\epsilon}}$, and
$C_0$ is a positive constant independent of $\epsilon$ and $\psi$. In fact, we can pick a ball $B^{\sigma}_R (a)$ containing all $\Gamma_{\epsilon} \times \{ \epsilon \}$ for sufficiently small $\epsilon$ and pick $\sigma = 0$ in the proof of Lemma \ref{lemma5-1}.
\end{rem}

\subsection{Boundary gradient estimate}

\vspace{2mm}

For $\sigma \in (0,  1)$,
let $B^{\sigma} = B^{\sigma}_R = B^{\sigma}_R(b)$ be a ball in $\mathbb{R}^{n+1}$ of radius $R$ centered at $b = (b', \sigma R)$  and $S^{\sigma} = S^{\sigma}_R = \partial B^{\sigma}_R \cap \mathbb{R}^{n+1}_+$. Then $\kappa_i[S^{\sigma}] = \sigma$ for all $i$ with respect to its inward normal by \eqref{eqn14}.

\begin{lemma} \label{LemmaB}
For $\epsilon > 0$, let $\sigma \in (0, 1)$ be a constant satisfying \eqref{eqn19}. Let $B_R^{\sigma} (b)$ be a ball such that $b' \notin \overline{\Omega_{\epsilon}}$ and $\mbox{dist}(b', \Gamma_{\epsilon}) > \frac{\epsilon}{\sigma}$. If $B_R^{\sigma}(b) \cap (\Omega_{\epsilon} \times \{ \epsilon \}) = \emptyset$, then any admissible hypersurface $\Sigma^{\epsilon} = \{ (x, u(x) ) | x \in \Omega_{\epsilon} \}$ with $u \geq \underline{u}$ to \eqref{eqn10} satisfies $B^{\sigma}_R(b) \cap \Sigma^{\epsilon} = \emptyset$.
\end{lemma}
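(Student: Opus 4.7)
The plan is to argue by contradiction, sliding a one-parameter family of $\sigma$-curvature spheres up to $B^\sigma_R(b)$ and reducing to Lemma~\ref{LemmaA} at a first-contact point. The key observation is that homothetic dilation from the ideal boundary point $(b',0)\in\partial_\infty\mathbb{H}^{n+1}$ is a hyperbolic isometry, so dilating $B^\sigma_R(b)$ by a factor $t$ yields the ball $B^\sigma_{tR}(b',\sigma tR)$, which by \eqref{eqn14} still has hyperbolic principal curvatures identically $\sigma$ with respect to its inward normal.

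Assume for contradiction that $B^\sigma_R(b)\cap\Sigma^\epsilon\neq\emptyset$. I would consider the nested family $\{B^\sigma_{tR}(b',\sigma tR)\}_{t\in(0,1]}$: for $tR<\mathrm{dist}(b',\Gamma_\epsilon)$ the ball's lateral extent cannot reach $\overline{\Omega_\epsilon}$, so the ball is disjoint from $\Sigma^\epsilon$, while the standing hypothesis puts $1$ in $\{t:B^\sigma_{tR}\cap\Sigma^\epsilon\neq\emptyset\}$. Set $t_0=\inf\{t\in(0,1]:B^\sigma_{tR}\cap\Sigma^\epsilon\neq\emptyset\}$; an openness argument at $t=1$ forces $t_0<1$, and continuity shows that at $t=t_0$ the sphere $S^\sigma_{t_0R}$ first touches $\overline{\Sigma^\epsilon}$ at some point $p$ while the open ball is still disjoint from $\Sigma^\epsilon$. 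The horizontal slice of $B^\sigma_{tR}(b',\sigma tR)$ at height $\epsilon$ is the disk about $b'$ of radius $\rho(tR):=\sqrt{(tR)^2-(\epsilon-\sigma tR)^2}$, and since $\rho^2(s)=(1-\sigma^2)s^2+2\sigma\epsilon s-\epsilon^2$ is strictly increasing in $s>0$, the hypothesis $B^\sigma_R\cap(\Omega_\epsilon\times\{\epsilon\})=\emptyset$ together with $t_0<1$ gives the strict bound $\rho(t_0R)<\rho(R)\leq\mathrm{dist}(b',\Gamma_\epsilon)$, so $p$ cannot lie in $\Gamma_\epsilon\times\{\epsilon\}$. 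Thus $p=(x_0,u(x_0))$ for some $x_0\in\Omega_\epsilon$.

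The hardest step, which I expect to be the main obstacle, is to verify that $p$ lies on the \emph{lower} hemisphere $v_-(x)=\sigma t_0R-\sqrt{(t_0R)^2-|x-b'|^2}$ rather than on the upper one: only the lower hemisphere carries admissible curvatures $(\sigma,\ldots,\sigma)\in\Gamma_k$, whereas the upper hemisphere has curvatures $-\sigma\notin\overline{\Gamma_k}$ and is therefore invisible to Lemma~\ref{LemmaA}. This is exactly where the condition $\mathrm{dist}(b',\Gamma_\epsilon)>\epsilon/\sigma$ enters: since the ball must laterally reach $\overline{\Omega_\epsilon}$ at first contact, $t_0R\geq\mathrm{dist}(b',\Gamma_\epsilon)$, whence $\sigma t_0R>\epsilon$, placing the ball's center strictly above the level $\{x_{n+1}=\epsilon\}$. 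Combined with $u(x)\to\epsilon$ as $x\to\Gamma_\epsilon$, this forces the growing ball to approach $\Sigma^\epsilon$ from above as $t$ increases, so the tangency must occur on the lower hemisphere.

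With $p$ on the lower hemisphere, the remainder is a direct application of Lemma~\ref{LemmaA}. Since $\Sigma^\epsilon$ lies outside the open ball near $p$, we have $u\leq v_-$ in a neighbourhood of $x_0$ with equality at $x_0$, so $u-v_-$ attains a local maximum at $x_0$; by \eqref{eqn19},
\[ f(\kappa[v_-])=\sigma_k^{1/k}(\sigma,\ldots,\sigma)<\psi(x_0,u(x_0))=f(\kappa[u]). \]
Lemma~\ref{LemmaA} then yields $u(x_0)\neq v_-(x_0)$, contradicting the tangency and establishing the claim.
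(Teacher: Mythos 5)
Your proof is correct and follows the same strategy as the paper's: homothetic dilation from $(b',0)$, first contact at an interior point $x_0\in\Omega_\epsilon$ on the lower hemisphere (with the equator ruled out by matching tangent planes), and then an application of Lemma~\ref{LemmaA}. You have filled in substantially more detail than the paper's terse sketch, in particular the nesting of the dilated balls and the strict bound $\rho(t_0R)<\rho(R)\le\mathrm{dist}(b',\Gamma_\epsilon)$ keeping the first contact off $\Gamma_\epsilon\times\{\epsilon\}$. The one informal step is the final sentence of your lower-hemisphere paragraph; it can be made fully rigorous by noting that the connected open ball $B^{\sigma}_{t_0R}$ is disjoint from the boundary of the compact set $K=\{(x,y):x\in\overline{\Omega_\epsilon},\ \epsilon\le y\le u(x)\}$ and dips below $\{x_{n+1}=\epsilon\}$, hence lies entirely outside $K$; combined with $\sigma t_0R-h>\epsilon$ for the vertical half-chord $h$ of the ball over $x_0$, this forces $u(x_0)=\sigma t_0R-h$, i.e.\ the lower hemisphere.
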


\begin{proof}
Suppose that $B^{\sigma} \cap (\Omega_{\epsilon} \times \{ \epsilon \}) = \emptyset$ and $B^{\sigma} \cap \Sigma^{\epsilon} \neq \emptyset$. Shrink $B^{\sigma}$ by homothetic dilations from $(b', 0)$ until $B^{\sigma} \cap \Sigma^{\epsilon} = \emptyset$. Then reverse the procedure until $S^{\sigma}$ first touches $\Sigma^{\epsilon}$ at some point $(x_0, u(x_0))$ where $x_0 \in \Omega_{\epsilon}$. Since $\Sigma^{\epsilon}$ is a $C^2$ graph,  $(x_0, u(x_0))$ must lie on the lower half of $S^{\sigma}$ (not including the equator). Note that $S^{\sigma}$ is locally a graph around $x_0$. Thus we reach a contradiction by Lemma \ref{LemmaA}.
\end{proof}

We have the following lemma for boundary gradient estimate.
\begin{lemma} \label{Lemma1-2}
Let $\epsilon$ be a sufficiently small constant which satisfies the compatibility conditions \eqref{eq1-4}. Then any admissible solution $u \geq \underline{u}$ to \eqref{eqn10} satisfies
\[ \frac{1}{\nu^{n+1}} <  \, \Big(\sigma - \frac{\sqrt{1 - \sigma^2}}{r_0^{\epsilon}} \,\epsilon - \frac{1 + \sigma}{(r_0^{\epsilon})^2}\,\epsilon^2\Big)^{-1} \quad\quad\mbox{on} \quad \Gamma_{\epsilon}. \]
\end{lemma}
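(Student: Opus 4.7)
The plan is to construct, for each boundary point $x_0 \in \Gamma_\epsilon$, a spherical cap $S^\sigma$ that serves as an upper barrier via Lemma~\ref{LemmaB}, touching the graph $\Sigma^\epsilon$ precisely at $(x_0,\epsilon)$. The desired gradient bound then drops out from comparing inner normal derivatives at the touching point, followed by a short algebraic reduction.

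Write $r_0 = r_0^\epsilon$ and let $\gamma$ denote the inner unit normal to $\Gamma_\epsilon$ at $x_0$. By the definition of $r_0$, there is an exterior ball $B_{r_0}(y_0) \subset \mathbb{R}^n \setminus \overline{\Omega_\epsilon}$ tangent to $\Gamma_\epsilon$ at $x_0$, with $y_0 = x_0 - r_0 \gamma$. I take $B = B_R^\sigma(b) \subset \mathbb{R}^{n+1}$ with $b = (y_0, \sigma R)$, where $R > 0$ is chosen so that the hemisphere $S^\sigma$ passes through $(x_0,\epsilon)$, i.e.,
\[
r_0^2 + (\sigma R - \epsilon)^2 = R^2,
\]
yielding $R = \frac{-\sigma\epsilon + \sqrt{\epsilon^2 + (1-\sigma^2)r_0^2}}{1-\sigma^2}$; the first compatibility condition $\epsilon < r_0 \sigma$ in \eqref{eq1-4} translates (by a direct computation) into $\sigma R > \epsilon$. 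The three hypotheses of Lemma~\ref{LemmaB} are then simultaneously met: $b' = y_0 \notin \overline{\Omega_\epsilon}$ because $y_0$ lies in the interior of the exterior ball; $\operatorname{dist}(y_0, \Gamma_\epsilon) = r_0 > \epsilon/\sigma$ by \eqref{eq1-4}; and the horizontal slice $B \cap \{x_{n+1} = \epsilon\}$ is the open disk of radius $\sqrt{R^2 - (\sigma R - \epsilon)^2} = r_0$ about $y_0$, which coincides with the interior of the exterior ball and is therefore disjoint from $\Omega_\epsilon \times \{\epsilon\}$. Lemma~\ref{LemmaB} then gives $B \cap \Sigma^\epsilon = \emptyset$.

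Let $v(x) = \sigma R - \sqrt{R^2 - |x - y_0|^2}$ be the lower hemisphere of $S^\sigma$, so $v(x_0) = \epsilon = u(x_0)$. Because $\Sigma^\epsilon$ avoids the interior of $B$ while $u$ is continuous with $u(x_0)=\epsilon$ much smaller than the upper hemisphere values near $x_0$, one has $u(x) \leq v(x)$ for $x \in \Omega_\epsilon$ in a neighborhood of $x_0$. Differentiating in the direction $\gamma$ at $x_0$ gives $\partial_\gamma u(x_0) \leq \partial_\gamma v(x_0) = \frac{r_0}{\sigma R - \epsilon}$. Since $u \equiv \epsilon$ on $\Gamma_\epsilon$, the tangential gradient vanishes at $x_0$, so $|Du(x_0)| = \partial_\gamma u(x_0)$, and using the defining relation $r_0^2 + (\sigma R - \epsilon)^2 = R^2$,
\[
\frac{1}{\nu^{n+1}(x_0)} = \sqrt{1 + |Du(x_0)|^2} \leq \frac{R}{\sigma R - \epsilon}.
\]

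It remains to show $\frac{R}{\sigma R - \epsilon} < \bigl(\sigma - \frac{\sqrt{1-\sigma^2}}{r_0}\epsilon - \frac{1+\sigma}{r_0^2}\epsilon^2\bigr)^{-1}$, which after clearing denominators is equivalent to $\frac{1}{R} < \frac{\sqrt{1-\sigma^2}}{r_0} + \frac{1+\sigma}{r_0^2}\epsilon$. Rationalizing $1/R$ against the conjugate of its radical and discarding positive remainders reduces this to $\sqrt{\epsilon^2 + (1-\sigma^2)r_0^2} < \sqrt{1-\sigma^2}\, r_0 + \epsilon$, which squares to the trivial inequality $0 < 2\sqrt{1-\sigma^2}\, r_0\, \epsilon$. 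The second compatibility condition in \eqref{eq1-4} simultaneously ensures that the right-hand side of the claimed estimate is positive, so the bound is nontrivial. The main obstacle is aligning the barrier so that all three hypotheses of Lemma~\ref{LemmaB} are met at once; this is exactly what forces the particular choice of $R$ above and what the compatibility conditions are designed to accommodate. Once the ball is placed correctly, both the comparison $u \leq v$ and the concluding algebraic manipulation are short.
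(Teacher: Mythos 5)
Your proof is correct and reconstructs precisely the barrier argument from Guan--Spruck \cite{GS10} that the paper cites for this lemma (the paper itself only says ``the proof can be found in \cite{GS10} which applies Lemma~\ref{LemmaB}''). The choice of $b = (y_0, \sigma R)$ with $y_0$ the center of an exterior tangent ball, the radius $R$ solving $r_0^2 + (\sigma R - \epsilon)^2 = R^2$, the verification of the three hypotheses of Lemma~\ref{LemmaB}, the one-sided comparison $u \le v$ near $x_0$ with $v$ the lower hemisphere, and the final algebraic reduction to $\sqrt{\epsilon^2 + (1-\sigma^2) r_0^2} < \sqrt{1-\sigma^2}\, r_0 + \epsilon$ are all sound; I checked in particular that $\sigma R > \epsilon$ is indeed equivalent to $\epsilon < r_0 \sigma$, that $\operatorname{dist}(y_0,\Gamma_\epsilon) = r_0$ since the exterior ball is disjoint from $\overline{\Omega_\epsilon} \supset \Gamma_\epsilon$, that $|Du(x_0)| = u_\gamma(x_0) \ge 0$ because $u \ge \epsilon = u|_{\Gamma_\epsilon}$, and that the ``discarded positive remainder'' step amounts to replacing the denominator $\epsilon^2 + r_0^2$ by the smaller $r_0^2$. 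The only unaddressed corner is $r_0^\epsilon = \infty$ (as in the paper's Example), where one runs the same argument with arbitrary finite $r_0$ and passes to the limit, since the resulting bound is monotone in $r_0$; this is a routine remark and does not affect the substance.
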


\begin{proof}
The proof can be found in \cite{GS10} which applies Lemma \ref{LemmaB}.
\end{proof}

\vspace{2mm}

\subsection{Global gradient estimate}

We first write \eqref{eq1-1} as
\begin{equation} \label{eq3.0}
\sigma_k^{\frac{1}{k}} ( \kappa ) = f (\kappa) = F (A [u]) =  G (D^2 u, D u, u) = \psi (x, u).
\end{equation}
For convenience, we denote
\[ f_i = \frac{\partial f}{\partial \kappa_i}, \quad F^{ij} = \frac{\partial F}{\partial a_{ij}}, \quad
  G^{st} = \frac{\partial G}{\partial u_{st}},  \quad G^s = \frac{\partial G}{\partial u_s}, \quad G_u = \frac{\partial G}{\partial u}. \]
Differentiate \eqref{eq3.0}, we obtain
\begin{equation} \label{eq2.4}
G^{st} u_{st1} = \psi_{x_1} + \psi_u u_1 - G^s u_{s1} - G_u u_1.
\end{equation}

\begin{lemma} \label{Lemma1}
We have
\begin{equation*}
 G^{st} = \frac{u}{ w} F^{ij} \gamma^{is} \gamma^{t j},
\end{equation*}
\begin{equation*}
G^s  =  - \frac{u_s}{ w^2}  F^{ij} a_{ij}
- \frac{2 (w \gamma^{is} u_q + u_i \gamma^{q s}) }{ w (1 + w)} F^{i j} a_{q j} + \frac{2}{w^2} F^{ij} \gamma^{i s} u_j,
\end{equation*}
and
\begin{equation*}
G_u  = \frac{1}{u} \big( F^{ij} a_{ij} - \frac{1}{w} \sum f_i \big).
\end{equation*}
\end{lemma}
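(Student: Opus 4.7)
The approach is pure chain rule: since $G(D^2 u, Du, u) = F(A[u])$ with $a_{ij} = \frac{1}{w}\delta_{ij} + \frac{u}{w}\gamma^{ik} u_{kl} \gamma^{lj}$, I compute $G^{st}$, $G^s$, $G_u$ by differentiating $a_{ij}$ in the indicated variable and contracting with $F^{ij}$. The formulas for $G^{st}$ and $G_u$ are immediate: only $u_{kl}$ depends on $u_{st}$, giving $\partial a_{ij}/\partial u_{st} = \frac{u}{w}\gamma^{is}\gamma^{tj}$, while only the explicit $u$ in $\frac{u}{w}$ depends on $u$, giving $\partial a_{ij}/\partial u = \frac{1}{w}\gamma^{ik} u_{kl} \gamma^{lj} = \frac{1}{u}\bigl(a_{ij} - \delta_{ij}/w\bigr)$. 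Contracting with $F^{ij}$ and using the invariance identity $\sum_i F^{ii} = \sum_i f_i$ (valid after diagonalizing $A$) yields the first and third claimed formulas.

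The formula for $G^s$ is the main calculation. Dependence on $u_s$ enters through four factors: $1/w$, $u/w$, and the two $\gamma$'s. The $1/w$ and $u/w$ contributions combine into $-\frac{u_s}{w^2}F^{ij}a_{ij}$, matching the first claimed term. A relabeling of dummy indices together with $F^{ij}=F^{ji}$ and $u_{kl}=u_{lk}$ shows that the two $\gamma$-contributions are equal, so what remains is $2F^{ij}\frac{u}{w}(\partial_s\gamma^{ik})u_{kl}\gamma^{lj}$. I would derive
\[
\partial_s\gamma^{ik} \;=\; -\frac{\gamma^{is}u_k + u_i\gamma^{sk}}{w(1+w)} + \frac{u_s u_i u_k}{w^3(1+w)^2}
\]
directly from $\gamma^{ik} = \delta_{ik} - \frac{u_i u_k}{w(1+w)}$, and use the identity
\[
\frac{u}{w}\, u_{kl}\gamma^{lj} \;=\; \gamma_{km}a_{mj} - \frac{\gamma_{kj}}{w},
\]
obtained by inverting the defining relation for $a_{ij}$, together with the contractions $u_k\gamma_{km} = w u_m$ and $\gamma^{sk}\gamma_{km} = \delta_{sm}$.

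Expanding the resulting product produces six sub-terms, three linear in $a_{qj}$ and three independent of it. The $a$-pieces combine into the middle term $-\frac{2(w\gamma^{is}u_q + u_i\gamma^{qs})}{w(1+w)}F^{ij}a_{qj}$: the $w\gamma^{is}u_q$ half captures the sub-term from $\gamma^{is}u_k \cdot \gamma_{km}a_{mj}$, while the $u_i\gamma^{qs}$ half (with $\gamma^{qs}$, not $\delta_{qs}$) is exactly what is needed to absorb both the sub-term from $u_i\gamma^{sk}\cdot\gamma_{km}a_{mj}$ and the cubic-in-$u$ sub-term from $u_s u_i u_k \cdot \gamma_{km}a_{mj}$, via the expansion $\gamma^{qs} = \delta_{qs} - \frac{u_q u_s}{w(1+w)}$. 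The remaining three non-$a$ sub-terms collapse into the trailing term $\frac{2}{w^2}F^{ij}\gamma^{is}u_j$ once the symmetry $F^{is}u_i = F^{si}u_i$ is used to cancel a cross contribution. The main (and only) obstacle is bookkeeping: recognizing this specific grouping, especially that the $\gamma^{qs}$ appearing in the middle term of the claimed formula (rather than $\delta_{qs}$) is precisely the factor that absorbs the cubic-in-$u_i$ correction generated by the last term of $\partial_s\gamma^{ik}$.
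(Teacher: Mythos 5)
Your proof is correct and follows essentially the same chain-rule computation as the paper. The only difference is that you differentiate $\gamma^{ik}$ directly, whereas the paper differentiates $\gamma_{pq}$ and then uses $\partial \gamma^{ik}/\partial u_s = -\gamma^{ip}(\partial\gamma_{pq}/\partial u_s)\gamma^{qk}$ together with $\gamma^{ip}u_p = u_i/w$; both routes yield the same $\partial_s\gamma^{ik}$ and the same final rearrangement into $a_{qj}$-terms.
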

\begin{proof}
Since
\[  G( D^2 u, D u, u ) =  F \Big( \frac{1}{w} \big( u \gamma^{ik} u_{k l} \gamma^{l j} + \delta_{ij} \big) \Big),   \]
by direct computation,
\begin{equation*}
 G^{st}  = \,\frac{\partial F}{\partial a_{ij}} \frac{\partial a_{ij}}{\partial u_{st}} = \frac{u}{ w} F^{ij} \gamma^{is} \gamma^{t j},
\end{equation*}
\begin{equation*}
G_u = \frac{\partial F}{\partial a_{ij}} \frac{\partial a_{ij}}{\partial u} = F^{ij} \frac{1}{w} \gamma^{ik} u_{k l} \gamma^{lj} = \frac{1}{u} \Big( F^{ij} a_{ij} - \frac{1}{w} \sum f_i \Big),
\end{equation*}
and
\begin{equation*}
   G^s  = \, \frac{\partial F}{\partial a_{ij}} \frac{\partial a_{ij}}{\partial u_s}
         =  F^{ij} \Big( - \frac{ u_s}{w^3} \big( u \gamma^{ik} u_{kl} \gamma^{lj} + \delta_{ij} \big) + \frac{2 u }{w} \frac{ \partial \gamma^{ik}}{\partial u_s} u_{kl} \gamma^{lj} \Big).
\end{equation*}
Note that
\[ \frac{ \partial \gamma^{ik}}{\partial u_s} = - \gamma^{ip}\, \frac{\partial \gamma_{pq}}{\partial u_s} \, \gamma^{qk}, \]
\begin{equation*}
\frac{\partial \gamma_{p q}}{\partial u_s} =
  \frac{\delta_{ps} u_q + \delta_{q s} u_p }{1 + w} - \frac{u_p u_q u_s}{(1 + w)^2 w}
=  \frac{\delta_{p s} u_q + u_p \gamma^{q s}}{1 + w },
\end{equation*}
and
\[ \gamma^{ip} \,u_p = \frac{u_i}{w}, \]
we thus have
\begin{equation*}
G^s =  - \frac{u_s}{ w^2} \, F^{ij} a_{ij}
- \frac{2 (w \,\gamma^{is}\, u_q + u_i \,\gamma^{q s}) }{ w (1 + w)} F^{i j} a_{q j} + \frac{2}{w^2} F^{ij} \gamma^{i s} u_j.
\end{equation*}
\end{proof}

Consider the test function
\begin{equation*}
\Phi =   \ln  |D u| +  A u,
\end{equation*}
where $A$ is a positive constant to be determined. 
Assume the maximum of $\Phi$ is attained at $x^0 = (x_1, \ldots, x_n) \in \Omega_{\epsilon}$. Choose the Euclidean coordinate frame ${\partial}_1, \ldots, {\partial}_n$ around $x^0$ such that at $x^0$,
\[ u_1 = |D u| \quad \mbox{and} \quad u_{\alpha} = 0 \quad \mbox{for} \quad \alpha = 2, \ldots, n. \]
We may assume that $|D u| > 1$, since otherwise we are done.
By simple calculation, we immediately obtain
\begin{equation} \label{eq3.4}
 \gamma^{ik} = \delta_{ik} - \frac{u_i u_k}{w ( 1 + w )} = \left\{ \begin{aligned}
 & 1/w, \quad \mbox{if} \quad  i = k = 1, \\
 & \delta_{ik},  \quad \mbox{otherwise}.
 \end{aligned} \right.
\end{equation}

Then $\ln  u_1 + A u$ achieves its maximum at $x^0$, at which, we have
\begin{equation} \label{eq2.1}
\frac{u_{1i}}{u_1} +  A u_i = 0,
\end{equation}

\begin{equation} \label{eq2.2}
\frac{G^{ij} u_{1ij}}{u_1} - \frac{ G^{ij} u_{1i} u_{1j} }{u_1^2} + A G^{ij} u_{ij} \leq 0.
\end{equation}

From \eqref{eq2.1}, we have
\begin{equation} \label{eq2.9}
u_{11} = - A u_1^2  \quad \mbox{and}  \quad u_{1 \alpha} = 0 \quad \mbox{for} \quad \alpha = 2, \ldots, n.
\end{equation}
We may rotate ${\partial}_2, \ldots, {\partial}_n$ such that at $x^0$, $\big\{ u_{ij} \big\}$ is diagonal, and so is $\{ a_{ij} \}$:
\begin{equation} \label{eq2.3}
 a_{ij} = \frac{1}{w} \big( \delta_{ij} + u \gamma^{ik} u_{kl} \gamma^{lj} \big) = \left\{ \begin{aligned}
 & \frac{1}{w} \Big( 1 + \frac{u u_{11}}{w^2} \Big), \quad \mbox{if} \quad i = j = 1, \\
 &  \frac{1}{w} \big( 1 + u u_{ii} \big) \delta_{ij}, \quad \mbox{otherwise}.
  \end{aligned} \right.
\end{equation}
Consequently, $\{F^{ij}\}$ is also diagonal at $x^0$.

By Lemma \ref{Lemma1} and \eqref{eq3.4}, we can see that $\{G^{ij}\}$ is diagonal at $x^0$,
\begin{equation} \label{eq2.7}
 G^{ij} =  \left\{ \begin{aligned}
 & \frac{u}{w^3} F^{11}, \quad \mbox{if} \quad i = j = 1, \\
 &  \frac{u}{w} F^{ii} \delta_{ij}, \quad \mbox{otherwise}.
  \end{aligned} \right.
\end{equation}

By Lemma \ref{Lemma1}, \eqref{eq3.4} and \eqref{eq2.3}, we have
\begin{equation}  \label{eq2.5}
 - G^s u_{s1} - G_u u_1
=  \frac{2 u u_1 u_{11}^2}{w^5} F^{11} + \frac{u_1}{u w} \sum F^{ii} + \Big( \frac{u_1 u_{11}}{w^2} - \frac{u_1}{u} \Big) \psi,
\end{equation}
\begin{equation} \label{eq2.8}
G^{ij} u_{ij} =  \frac{u}{w} F^{ii} \gamma^{ii} \gamma^{ii}  u_{ii} =  \psi - \frac{1}{w} \sum F^{ii}.
\end{equation}

By \eqref{eq2.2}, \eqref{eq2.4}, \eqref{eq2.7}, \eqref{eq2.5} and \eqref{eq2.8}, we have
\begin{equation} \label{eq2.6}
\begin{aligned}
& \Big( \frac{2 u }{w^5} - \frac{u}{u_1^2 w^3} \Big) F^{11} u_{11}^2 - \frac{1}{w} \Big( A - \frac{1}{u} \Big) \sum F^{ii} \\
& + \frac{\psi_{x_1}}{u_1} + \psi_u + \Big( A + \frac{u_{11}}{w^2} - \frac{1}{u} \Big) \psi \leq 0.
\end{aligned}
\end{equation}

By \eqref{eq2.3} and \eqref{eq2.9},
\[ a_{11} =  \frac{1}{w} \Big( 1 + \frac{u u_{11}}{w^2} \Big) = \frac{1}{w} \Big( 1 - \frac{A u  u_1^2}{w^2} \Big) < 0  \]
if $A$ is chosen sufficiently large (which depends on $\epsilon$). It follows that
\begin{equation*}
\begin{aligned}
F^{11} =   \,& \frac{1}{k} \sigma_k^{\frac{1}{k} - 1}  \sigma_{k-1}(a_{22}, \ldots, a_{nn})  \\
=  \,& \frac{1}{k} \sigma_k^{\frac{1}{k} - 1} \Big(\sigma_{k-1}
 - a_{11} \sigma_{k-2}(a_{22}, \ldots, a_{nn}) \Big)  \\
\geq \,&   \frac{1}{k} \sigma_k^{\frac{1}{k} - 1} \sigma_{k-1}.
\end{aligned}
\end{equation*}
Then by Newton-Maclaurin inequality, we have
\begin{equation} \label{eq2.10}
 c(n, k) \leq \sum F^{ii} = \frac{n - k + 1}{k} \sigma_k^{\frac{1}{k} - 1} \sigma_{k - 1} \leq (n - k + 1) F^{11},
\end{equation}
where $c(n, k)$ is a positive constant.

Choosing $A$ sufficiently large, by \eqref{eq2.6}, \eqref{eq2.10}, \eqref{eq2.9} and assumption \eqref{eq1-2}, we obtain an upper bound for $u_1$.

\vspace{4mm}

\section{Global curvature estimate}

\vspace{4mm}

In this section, we will derive second order estimate if we know them on the boundary. For a hypersurface $\Sigma$, let $g$ and $\nabla$ denote the induced metric and Levi-Civita connection on $\Sigma$ induced from $\mathbb{H}^{n+1}$, while $\tilde{g}$ and $\tilde{\nabla}$ be the ones induced from $\mathbb{R}^{n+1}$. The Christoffel symbols with respect to $\nabla$ and $\tilde{\nabla}$ are related by the formula
\[ \Gamma_{ij}^k = \tilde{\Gamma}_{ij}^k - \frac{1}{u} (u_i \delta_{kj} + u_j \delta_{ik} - \tilde{g}^{kl} u_l \tilde{g}_{ij}). \]
Consequently, for any $v \in C^2(\Sigma)$ and in any local frame on $\Sigma$,
\begin{equation} \label{eqC-3}
\nabla_{ij} v = (v_i)_j - \Gamma_{ij}^k v_k = \tilde{\nabla}_{ij} v + \frac{1}{u}( u_i v_j + u_j v_i - \tilde{g}^{kl} u_l v_k \tilde{g}_{ij} ).
\end{equation}

\begin{lemma}  \label{Lemma0-3}
In $\mathbb{R}^{n+1}$,
\begin{equation} \label{eq0-1}
\tilde{g}^{kl} u_k u_l  = |\tilde\nabla u|^2 = 1 - (\nu^{n+1})^2,
\end{equation}
\begin{equation}  \label{eq0-2}
\tilde{\nabla}_{ij} u = \tilde{h}_{ij} \nu^{n+1} \quad \mbox{and} \quad \tilde{\nabla}_{ij} x_{k} = \tilde{h}_{ij} \nu^{k}, \quad k = 1, \ldots, n,
\end{equation}
\begin{equation}  \label{eq0-4}
(\nu^{n+1})_i = - \tilde{h}_{ij} \,\tilde{g}^{j k} u_k,
\end{equation}
\begin{equation} \label{eq0-5}
\tilde{\nabla}_{ij} \nu^{n+1} = - \tilde{g}^{kl} ( \nu^{n+1} \tilde{h}_{il} \tilde{h}_{kj} + u_l \tilde{\nabla}_k \tilde{h}_{ij} ),
\end{equation}
where $\tau_1, \ldots, \tau_n$ is any local frame on $\Sigma$.
\end{lemma}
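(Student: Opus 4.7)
The plan is to treat all four identities as instances of standard Gauss--Weingarten calculus for a hypersurface in $\mathbb{R}^{n+1}$, exploiting the fact that on the graph $\Sigma = \{(x, u(x))\}$ the function $u$ coincides with the restriction of the ambient coordinate $x_{n+1}$ to $\Sigma$. I will work in an arbitrary local frame $\tau_1, \ldots, \tau_n$ on $\Sigma$, so that $\tilde g_{ij} = \langle \tau_i, \tau_j\rangle$, $\tilde h_{ij} = \langle \tilde D_{\tau_i}\tau_j, \nu\rangle$, and Weingarten reads $\tilde D_{\tau_i}\nu = -\tilde h_{ij}\tilde g^{jk}\tau_k$. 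Because $\partial_{n+1}$ is a parallel unit vector field in $\mathbb{R}^{n+1}$, and more generally each coordinate vector field $\partial_A$ is parallel, restricting them to $\Sigma$ will produce the desired formulas almost mechanically.

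For \eqref{eq0-1}, the intrinsic gradient $\tilde\nabla u$ is just the tangential projection of $\partial_{n+1}$ onto $T\Sigma$, since $u = x_{n+1}|_\Sigma$ and $\partial_{n+1}$ is the ambient gradient of $x_{n+1}$. Pythagoras applied to the orthogonal decomposition $\partial_{n+1} = \tilde\nabla u + \nu^{n+1}\nu$ gives $|\tilde\nabla u|^2 = 1-(\nu^{n+1})^2$, and in coordinates this is $\tilde g^{kl}u_k u_l$. For \eqref{eq0-2}, I apply the Gauss formula $\tilde\nabla_{ij}(x_A) = \langle \tilde D_{\tau_i}\tau_j, \partial_A\rangle = \langle \tilde h_{ij}\nu, \partial_A\rangle = \tilde h_{ij}\nu^A$, using that the ambient Hessian of any coordinate function vanishes so the only surviving contribution comes through the second fundamental form; taking $A = n+1$ yields the first formula and general $A$ the second. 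For \eqref{eq0-4}, I differentiate $\nu^{n+1} = \langle \nu, \partial_{n+1}\rangle$ along $\tau_i$; since $\partial_{n+1}$ is ambient-parallel, $(\nu^{n+1})_i = \langle \tilde D_{\tau_i}\nu, \partial_{n+1}\rangle$, and Weingarten together with $\langle \tau_k, \partial_{n+1}\rangle = u_k$ produces $-\tilde h_{ij}\tilde g^{jk}u_k$.

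Finally, for \eqref{eq0-5} I cover $(\nu^{n+1})$ with \eqref{eq0-4} once more. Using the product rule for the intrinsic connection $\tilde\nabla$, noting that $\tilde\nabla\tilde g = 0$ so the derivative of $\tilde g^{kl}$ drops out, I get
\[ \tilde\nabla_{ij}\nu^{n+1} = -(\tilde\nabla_j\tilde h_{ik})\tilde g^{kl}u_l - \tilde h_{ik}\tilde g^{kl}\tilde\nabla_j u_l. \]
The first term is rewritten via the Codazzi identity $\tilde\nabla_j\tilde h_{ik} = \tilde\nabla_k\tilde h_{ij}$ (valid because $\mathbb{R}^{n+1}$ is flat), and the second term is handled by \eqref{eq0-2}, which gives $\tilde\nabla_j u_l = \tilde h_{jl}\nu^{n+1}$. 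Combining and relabelling indices produces \eqref{eq0-5}. There is essentially no hard step here; the only thing to watch is the distinction between the ambient Euclidean derivative $\tilde D$ and the induced intrinsic connection $\tilde\nabla$ on $\Sigma$, and remembering that Codazzi and the constancy of the metric under $\tilde\nabla$ are both consequences of the Euclidean ambient being flat.
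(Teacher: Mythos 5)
Your proof is correct and follows the standard Gauss--Weingarten calculus that one would expect for these identities; the paper itself does not spell out a proof but simply cites \cite{GS11, Sui2019}, and the derivations in those references proceed by essentially the same route (decomposing $\partial_{n+1}$ into tangential and normal parts, applying the Gauss formula to coordinate functions, the Weingarten equation, and Codazzi). One small point worth tightening: in the proof of \eqref{eq0-2} the chain of equalities should read
\[
\tilde\nabla_{ij}(x_A) \;=\; \bigl(\tilde D_{\tau_i}\tau_j - \tilde\nabla_{\tau_i}\tau_j\bigr)\cdot\partial_A \;=\; \tilde h_{ij}\,\nu\cdot\partial_A \;=\; \tilde h_{ij}\nu^A,
\]
rather than $\tilde\nabla_{ij}(x_A)=\langle\tilde D_{\tau_i}\tau_j,\partial_A\rangle$ as written, since $\tilde D_{\tau_i}\tau_j$ still carries the tangential piece $\tilde\nabla_{\tau_i}\tau_j$ that must be subtracted to obtain the intrinsic Hessian (or, equivalently, one should explicitly invoke a geodesic normal frame at the point and then appeal to tensoriality). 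The conclusion is unaffected, and the rest of the argument---Pythagoras for \eqref{eq0-1}, Weingarten with $\langle\tau_k,\partial_{n+1}\rangle = u_k$ for \eqref{eq0-4}, and differentiation of \eqref{eq0-4} via $\tilde\nabla\tilde g = 0$, \eqref{eq0-2}, and Codazzi for \eqref{eq0-5}---is sound.
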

\begin{proof}
The identities in this Lemma can be found in \cite{GS11} and the proof can be found in \cite{Sui2019}.
\end{proof}

\begin{lemma}  \label{LemmaC-1}
Let $\Sigma$ be an admissible hypersurface in $\mathbb{H}^{n+1}$ satisfying equation \eqref{eq1-1}. Then in a local orthonormal frame on $\Sigma$,
\begin{equation}  \label{eqC-5}
\begin{aligned}
F^{ij} \nabla_{ij} \nu^{n+1}
= & - \nu^{n+1} F^{ij} h_{ik} h_{kj} + \big( 1 + (\nu^{n+1})^2 \big) F^{ij} h_{ij} - \nu^{n+1} \sum f_i \\ & - \frac{2}{u^2} F^{ij} h_{jk} u_i u_k + \frac{2 \nu^{n+1}}{u^2} F^{ij} u_i u_j - \frac{u_k}{u} \psi_k.
\end{aligned}
\end{equation}
\end{lemma}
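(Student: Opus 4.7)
The idea is to reduce $F^{ij}\nabla_{ij}\nu^{n+1}$ to Euclidean quantities via the conformal relation $\tilde g=u^{2}g$, apply the explicit Euclidean Hessian identity \eqref{eq0-5}, use \eqref{eq0-3} to pass between $\tilde h$ and $h$, and finally differentiate the curvature equation \eqref{eq1-1} tangentially to absorb the remaining $\tilde\nabla\tilde h$-term.

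Concretely, I would work in a local orthonormal frame $\tau_1,\dots,\tau_n$ for $g$ at the point of interest, so that $g_{ij}=\delta_{ij}$, $\tilde g_{ij}=u^{2}\delta_{ij}$, $\tilde g^{kl}=u^{-2}\delta_{kl}$, and, by \eqref{eq0-1}, $\sum_k u_k^{2}=u^{2}\bigl(1-(\nu^{n+1})^{2}\bigr)$. Applied to $v=\nu^{n+1}$, formula \eqref{eqC-3} reduces to
\[
\nabla_{ij}\nu^{n+1}=\tilde\nabla_{ij}\nu^{n+1}+\frac{1}{u}\Bigl(u_i(\nu^{n+1})_j+u_j(\nu^{n+1})_i-u_k(\nu^{n+1})_k\,\delta_{ij}\Bigr).
\]
Substituting \eqref{eq0-4} together with the pointwise form $\tilde h_{ij}=u(h_{ij}-\nu^{n+1}\delta_{ij})$ of \eqref{eq0-3}, one gets $(\nu^{n+1})_j=-u^{-1}(h_{jl}u_l-\nu^{n+1}u_j)$, and expanding $\tilde h_{ik}\tilde h_{kj}=u^{2}\bigl(h_{ik}h_{kj}-2\nu^{n+1}h_{ij}+(\nu^{n+1})^{2}\delta_{ij}\bigr)$ in \eqref{eq0-5} rewrites $\tilde\nabla_{ij}\nu^{n+1}$ entirely in terms of $h$, $\nu^{n+1}$, $u$ and the residual $-u^{-2}u_l\tilde\nabla_l\tilde h_{ij}$. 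Contracting with $F^{ij}$ and using $\sum F^{ii}=\sum f_i$, the ``algebraic'' pieces assemble, after simplification, into the right-hand side of \eqref{eqC-5} up to: (i) the residual $-u^{-2}F^{ij}u_l\tilde\nabla_l\tilde h_{ij}$, (ii) a shortfall $(1-(\nu^{n+1})^{2})F^{ij}h_{ij}$ on the $F^{ij}h_{ij}$ coefficient, and (iii) an extra $u^{-2}h_{kl}u_ku_l\sum f_i$.

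The main obstacle is absorbing this residual. My plan is to convert $\tilde\nabla_l\tilde h_{ij}$ to $\nabla_l\tilde h_{ij}$ via the Christoffel-symbol difference from \eqref{eqC-3} (each correction of order $1/u$), then differentiate the tensorial identity $\tilde h_{ij}=u h_{ij}-u\nu^{n+1}g_{ij}$ by $\nabla_l$ (using $\nabla g=0$) to reduce $\nabla_l\tilde h_{ij}$ to $u\nabla_l h_{ij}$ plus lower-order terms in $u_l$, $(\nu^{n+1})_l$, $h$, and finally differentiate the scalar equation $F(h)=\psi(x,u)$ along $\Sigma$ to obtain $F^{ij}\nabla_l h_{ij}=\psi_l$ with $\psi_l:=\tau_l\psi$. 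Euclidean Codazzi (the full symmetry $\tilde\nabla_k\tilde h_{ij}=\tilde\nabla_i\tilde h_{jk}$) is used at this stage so that contracting $u_l\tilde\nabla_l\tilde h_{ij}$ against the symmetric $F^{ij}$ can be symmetrized efficiently and so that the Christoffel correction contributes a matching symmetric piece. The corrections generated then cancel items (ii) and (iii) exactly, while the $\psi_l$ contribution produces the term $-u_k\psi_k/u$ in \eqref{eqC-5}. The individual computations are routine; the delicacy lies entirely in the bookkeeping, since the orthonormality only holds at the point, so $\nabla$ and $\tilde\nabla$ actually disagree and the many auxiliary $1/u$ corrections must match up by symmetry before \eqref{eqC-5} emerges by direct comparison of terms.
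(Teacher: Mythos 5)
Your plan is correct and follows the route the paper defers to in \cite{Sui2019}: pass from $\nabla$ to $\tilde\nabla$ via \eqref{eqC-3}, expand with \eqref{eq0-5}, translate $\tilde h$ to $h$ via \eqref{eq0-3}, and differentiate the equation to get $F^{ij}\nabla_k h_{ij}=\psi_k$. The bookkeeping checks out --- in fact Codazzi is not even needed, since the $1/u$ Christoffel corrections to $\tilde\nabla_k\tilde h_{ij}$ cancel identically when contracted against $u_k$ --- and the residual $-u^{-2}F^{ij}u_k\tilde\nabla_k\tilde h_{ij}$ supplies exactly the missing $(1-(\nu^{n+1})^2)F^{ij}h_{ij}$, absorbs the extra $u^{-2}h_{kl}u_ku_l\sum f_i$, and produces $-u_k\psi_k/u$, as you anticipated.
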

\begin{proof}
The proof can be found in \cite{Sui2019}, which utilizes the above identities.
\end{proof}

Now we state the main theorem in this section on global curvature estimate, which is equivalent to global second order estimate.
\begin{thm}
Let $\Sigma = \{( x, u (x) ) \,\vert\, x \in \Omega_{\epsilon} \}$ be an admissible $C^4$ graph in $\mathbb{H}^{n + 1}$ satisfying \eqref{eq1-1} for some positive function $\psi(x, u) \in C^2 (\mathbb{H}^{n + 1})$.
Then there exists a positive constant $C$ depending only on $n$, $k$, $\epsilon$, $\Vert u \Vert_{C^1(\Omega_{\epsilon})}$ and $\Vert\psi\Vert_{C^2}$ such that
\[ \sup\limits_{\substack{ x \in \Omega_{\epsilon} \\  i = 1, \ldots, n}}  \kappa_i (x)  \leq C \Big( 1 + \sup\limits_{\substack{x \in \Gamma_{\epsilon} \\ i = 1, \ldots, n}}  \kappa_i (x) \Big).  \]
\end{thm}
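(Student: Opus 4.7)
The plan is to run a maximum-principle argument on the test function
$$W(x) = \log \kappa_{\max}(x) + A\, u(x),$$
where $\kappa_{\max}(x)$ is the largest hyperbolic principal curvature at $x$ and $A>0$ is a constant to be chosen. The $C^1$ estimate from Section 2 supplies upper and lower bounds for $u$ and a positive lower bound for $\nu^{n+1}$ on $\overline{\Omega_\epsilon}$, so the single linear term $Au$ suffices to drive the argument; one could equivalently subtract a multiple of $\log \nu^{n+1}$. If $W$ attains its maximum on $\Gamma_\epsilon$, the conclusion is immediate from the assumed boundary control, so suppose instead that the maximum is attained at an interior point $x_0 \in \Omega_\epsilon$ and that $\kappa_{\max}(x_0) \gg 1$.

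At $x_0$ fix a local orthonormal frame $\tau_1,\ldots,\tau_n$ on $\Sigma$ that diagonalizes $h_{ij}$ with $h_{11}=\kappa_{\max}$. The non-smoothness of $\kappa_{\max}$ at points of higher multiplicity is handled by the standard device of replacing $\log \kappa_{\max}$ by $\log h_{11}$ in a neighbourhood of $x_0$; the first and second order conditions at $x_0$ then read
$$\frac{\nabla_i h_{11}}{h_{11}} + A\, \nabla_i u = 0, \qquad F^{ii}\left(\frac{\nabla_{ii} h_{11}}{h_{11}} - \frac{(\nabla_i h_{11})^2}{h_{11}^2}\right) + A\, F^{ii}\nabla_{ii}u \leq 0.$$
Commuting covariant derivatives via the Gauss equation in $\mathbb{H}^{n+1}$, namely
$$R_{ijkl} = h_{ik}h_{jl} - h_{il}h_{jk} - (\delta_{ik}\delta_{jl} - \delta_{il}\delta_{jk}),$$
yields $\nabla_{ii}h_{11} = \nabla_{11}h_{ii} + (h_{ii}-h_{11})(h_{11}h_{ii}-1)$. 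Differentiating $F(h_{ij})=\psi$ twice along $\tau_1$ and invoking concavity of $f=\sigma_k^{1/k}$ on $\Gamma_k$ gives
$$F^{ii}\nabla_{11} h_{ii} \;=\; \nabla_{11}\psi - F^{ij,kl}\nabla_1 h_{ij}\nabla_1 h_{kl} \;\geq\; \nabla_{11}\psi,$$
with $\nabla_{11}\psi$ controlled by $\kappa_{\max}$, the $C^1$ norm of $u$ and $\|\psi\|_{C^2}$ after expansion through the identities of Lemma \ref{Lemma0-3} and the Weingarten formula.

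The term $AF^{ii}\nabla_{ii}u$ is then rewritten in terms of $h_{ij}$ and $\nu^{n+1}$ by means of \eqref{eqC-3} and \eqref{eq0-2}, producing a leading contribution proportional to $A\,\nu^{n+1}\sum F^{ii}$ together with terms bounded by $C\kappa_{\max}\sum F^{ii}$. Combining all pieces, the differential inequality at $x_0$ reduces to the schematic form
$$\kappa_{\max}F^{ii}h_{ii}^{2} - \kappa_{\max}^{2}\sum f_i + A\,\nu^{n+1}\sum F^{ii} \;\leq\; C(1+\kappa_{\max})\sum F^{ii} + C\kappa_{\max},$$
in which the first two terms arise from the Gauss-equation commutator. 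Choosing $A$ sufficiently large and using the uniform lower bound $\sum F^{ii}\geq c(n,k)>0$ from \eqref{eq2.10}, together with positivity of the admissible $\kappa_i$, forces an a priori upper bound on $\kappa_{\max}(x_0)$.

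The hardest step will be controlling the third-order term $F^{ii}(\nabla_i h_{11})^2/h_{11}^2$, which is only partially absorbed by the concavity contribution $-F^{ij,kl}\nabla_1 h_{ij}\nabla_1 h_{kl}\geq 0$; a Lin--Trudinger-type inequality for $\sigma_k^{1/k}$ must be invoked to extract sufficient positivity, after which the extra terms coming from the constant sectional curvature $-1$ of $\mathbb{H}^{n+1}$ have to be absorbed. In the half-space model these curvature terms become entangled with factors of $u$ and $\nu^{n+1}$, which, as the introduction signals, leads naturally to computations structurally parallel to those of Jin--Li \cite{JL05}.
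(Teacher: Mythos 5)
Your test function $W=\log\kappa_{\max}+Au$ omits the crucial $\nu^{n+1}$ factor, and this is not a cosmetic choice: it is what makes the whole argument close. The paper maximizes $\frac{\kappa_{\max}}{\nu^{n+1}-a}\,e^{\beta/u}$. When you commute covariant derivatives via the Gauss equation in $\mathbb{H}^{n+1}$, the commutator $(\kappa_i\kappa_1-1)(\kappa_1-\kappa_i)$ contracted with $F^{ii}$ and divided by $\kappa_1$ produces the \emph{negative} term $-\sum f_i\kappa_i^2$, together with the good term $\kappa_1\psi$. The positive counterweight $\frac{\nu^{n+1}}{\nu^{n+1}-a}\sum f_i\kappa_i^2$ comes exactly from $-\frac{F^{ii}\nabla_{ii}\nu^{n+1}}{\nu^{n+1}-a}$ in the second-variation inequality, via \eqref{eqC-5}; the net is $\frac{a}{\nu^{n+1}-a}\sum f_i\kappa_i^2>0$, which is the good term that drives the contradiction in both cases of the Jin--Li split. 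A term of the form $Au$ (or $\beta/u$) contributes only $\pm C\sum F^{ii}$-type quantities via $F^{ii}\nabla_{ii}u$ (see \eqref{eq0-9}); it cannot produce the quadratic $\sum f_i\kappa_i^2$. So the bad term from the Gauss equation is uncontrolled in your scheme --- in the worst regime $\kappa_n\le-\theta\kappa_1$ it is of order $-\kappa_1^2\sum f_i$, which swamps everything else. Also, with the sign $+Au$ your $u$-term gives $AF^{ii}\nabla_{ii}u$ containing $-Au\sum F^{ii}$, which is itself a bad term; the paper's $+\beta/u$ yields $+\frac{\beta}{u}\sum f_i$.

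Two further gaps. First, you invoke ``positivity of the admissible $\kappa_i$'' to finish; for $k<n$, points of $\Gamma_k$ can have negative eigenvalues, and handling precisely these is why the argument requires the Jin--Li dichotomy ($\kappa_n\le-\theta\kappa_1$ vs.\ $\kappa_n>-\theta\kappa_1$) and, in the second case, the further split into index sets $J=\{i: f_1\ge\theta^2 f_i\}$ and $L=\{i: f_1<\theta^2 f_i\}$. Second, the third-order terms are not controlled by ``a Lin--Trudinger-type inequality'': what is actually used is the Andrews--Gerhardt concavity inequality $-F^{ij,rs}h_{ij1}h_{rs1}\ge 2\sum_{i\ge 2}\frac{f_i-f_1}{\kappa_1-\kappa_i}h_{i11}^2$, applied only to the indices in $L$ (where $f_i>4f_1$ so the coefficient exceeds $F^{ii}/\kappa_1^2$ after choosing $\theta=1/2$), while the indices in $J$ are absorbed by Cauchy--Schwarz into $\sum f_i\kappa_i^2$ and a bounded multiple of $f_1$. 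Without these structural steps the differential inequality does not close.
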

\begin{proof}
First, note that
\[ \nu^{n+1} = \frac{1}{\sqrt{1 + |D u|^2}} \geq 2 a > 0 \quad \mbox{on} \,\, \Sigma \]
for some positive constant $a$.
Let $\kappa_{\max} ({ \bf x })$ be the largest principal curvature of $\Sigma$ at ${\bf x}$. Consider
\begin{equation*}
M_0 = \sup\limits_{{\bf x} \in\Sigma}
\,\frac{\kappa_{\max\,}({\bf x})}{{\nu}^{n+1} - a} e^{\frac{\beta}{u}},
\end{equation*}
where $\beta$ is a positive constant to be determined. Assume $M_0 > 0$ is attained at an interior point ${ \bf x}_0 \in \Sigma$.
Let $\tau_1, \ldots, \tau_n$ be a local orthonormal frame about
${ \bf x}_0$ such that $h_{ij}({\bf x}_0) = \kappa_i \,\delta_{ij}$, where
$\kappa_1 \geq \ldots \geq \kappa_n$ are the hyperbolic principal curvatures of
$\Sigma$ at ${\bf x}_0$.
Thus, $\ln h_{11} - \ln ( {\nu}^{n+1} - a ) + \frac{\beta}{u}$ has a
local maximum at ${\bf x}_0$, at which,
\begin{equation} \label{eq2G-1}
\frac{h_{11i}}{h_{11}} - \frac{\nabla_i \nu^{n + 1}}{\nu^{ n + 1 } - a} - \beta \frac{u_i}{u^2} = 0,
\end{equation}
\begin{equation} \label{eq2G-2}
\frac{F^{ii} h_{11ii}}{h_{11}} - \frac{F^{ii} h_{11i}^2}{h_{11}^2} - \frac{F^{ii} \nabla_{ii} \nu^{n + 1}}{\nu^{n + 1} - a} + \frac{F^{ii} (\nu^{n + 1})_i^2}{(\nu^{n + 1} - a)^2} - \beta F^{ii} \frac{\nabla_{ii} u}{u^2} + \beta F^{ii} \frac{2 u_i^2}{u^3} \leq 0.
\end{equation}

Differentiate equation \eqref{eq1-1} twice,
\begin{equation}  \label{eq2G-3}
F^{ii} h_{ii11} + F^{ij, rs} h_{ij1} h_{rs1} = \psi_{11}  \geq  - C \kappa_1.
\end{equation}

By Gauss equation, we have the following commutation formula,
\begin{equation} \label{eq2G-4}
h_{ii11} = h_{11ii} + ( \kappa_i \kappa_1 - 1 )( \kappa_i - \kappa_1 ).
\end{equation}

By \eqref{eq0-1}, we have
\begin{equation} \label{eq2G-13}
\tilde{g}^{kl} u_k u_l  = \frac{\delta_{kl}}{u^2} u_k u_l = 1 - (\nu^{n+1})^2.
\end{equation}
By \eqref{eqC-3}, \eqref{eq0-2}, \eqref{eq2G-13} and \eqref{eq0-3}, we have
\begin{equation} \label{eq0-9}
- \beta F^{ii} \frac{\nabla_{ii} u}{u^2} + \beta F^{ii} \frac{2 u_i^2}{u^3} = \frac{\beta}{u} \sum F^{ii} - \beta \psi \frac{\nu^{n+1}}{u}.
\end{equation}

Combining \eqref{eq2G-2}, \eqref{eq2G-4}, \eqref{eq2G-3}, \eqref{eqC-5} and \eqref{eq0-9} yields,
\begin{equation} \label{eq2G-5}
\begin{aligned}
& \Big( \kappa_1 - \frac{\beta \nu^{n + 1}}{u} \Big) \psi  - C  + \Big( \frac{\beta}{u} + \frac{a}{\nu^{n+1} - a} \Big) \sum f_i \\
& + \frac{a}{\nu^{n+1} - a} \sum f_i \kappa_i^2   + \frac{2}{\nu^{n+1} - a}  \sum f_i \kappa_i \frac{u_i^2}{u^2} - \frac{2 \nu^{n+1}}{\nu^{n+1} - a} \sum f_i \frac{u_i^2}{u^2} \\
& - \frac{F^{ij, rs} h_{ij1} h_{rs1}}{\kappa_1} - \frac{F^{ii} h_{11i}^2}{\kappa_1^2} + \frac{F^{ii} (\nu^{n+1})_i^2}{(\nu^{n+1} - a)^2} \leq 0.
\end{aligned}
\end{equation}

Let $\theta \in (0, 1)$ be a constant which will be determined later. Using the idea of Jin-Li \cite{JL05}, we divide our discussion into two cases.

\vspace{2mm}

Case (i). Assume $\kappa_n \leq - \theta \kappa_1$.
By \eqref{eq2G-1} and Cauchy-Schwartz inequality,
\[
\begin{aligned}
- \frac{F^{ii} h_{11i}^2}{\kappa_1^2} + \frac{F^{ii} (\nu^{n+1})_i^2}{(\nu^{n+1} - a)^2}
\geq   - \delta_1 \frac{F^{ii} (\nu^{n+1})_i^2}{(\nu^{n+1} - a)^2} - \Big( 1 + \frac{1}{\delta_1} \Big) \beta^2 f_i \frac{u_i^2}{u^4},
\end{aligned}
\]
where $\delta_1$ is a positive constant to be determined later.
By \eqref{eq0-4} and \eqref{eq0-3},
\begin{equation} \label{eq2G-11}
(\nu^{n+1})_i = \frac{u_i}{u} (\nu^{n + 1} - \kappa_i).
\end{equation}
In view of \eqref{eq2G-13}, we have
\begin{equation} \label{eq2G-8}
\begin{aligned}
& - \frac{F^{ii} h_{11i}^2}{\kappa_1^2} + \frac{F^{ii} (\nu^{n+1})_i^2}{(\nu^{n+1} - a)^2} \\
\geq  &  - \frac{2 \delta_1}{(\nu^{n+1} - a)^2} \sum f_i \kappa_i^2 - \Big( \frac{2 \delta_1}{(\nu^{n + 1} - a)^2} + \frac{\beta^2}{u^2} \big(1 + \frac{1}{\delta_1} \big) \Big) \sum f_i.
\end{aligned}
\end{equation}

By \eqref{eq2G-13} and Cauchy-Schwartz inequality,
\begin{equation}  \label{eq2G-9}
\begin{aligned}
 & \frac{2}{\nu^{n+1} - a}  \sum f_i \kappa_i \frac{u_i^2}{u^2} -  \frac{2 \nu^{n+1}}{\nu^{n+1} - a}  \sum f_i \frac{u_i^2}{u^2} \\
\geq & - \frac{2}{\nu^{n+1} - a}  \sum f_i |\kappa_i| - \frac{2}{\nu^{n+1} - a}  \sum f_i \\
\geq & -  \frac{1}{ \delta_2 (\nu^{n+1} - a)}  \sum f_i  - \frac{\delta_2}{(\nu^{n+1} - a)} \sum f_i \kappa_i^2 - \frac{2}{\nu^{n+1} - a} \sum f_i,
\end{aligned}
\end{equation}
where $\delta_2$ is a positive constant to be determined later.

By assumption,
\begin{equation} \label{eq2G-7}
\sum f_i \kappa_i^2 \geq f_n \kappa_n^2 \geq \frac{1}{n} \sum f_i \theta^2 \kappa_1^2 = \frac{\theta^2}{n} \kappa_1^2 \sum f_i.
\end{equation}
Therefore, by \eqref{eq2G-8} with $\delta_1 = \frac{a^2}{8}$, \eqref{eq2G-9} with $\delta_2 = \frac{a}{4}$ and \eqref{eq2G-7}, inequality \eqref{eq2G-5} reduces to
\[
\begin{aligned}
&  \Big( \frac{\beta}{u} + \frac{a}{\nu^{n+1} - a} - \frac{2 \delta_1}{(\nu^{n + 1} - a)^2} - \frac{\beta^2}{u^2} \big(1 + \frac{1}{\delta_1} \big)
- \frac{1}{ \delta_2 (\nu^{n+1} - a)} - \frac{2}{\nu^{n+1} - a}  \Big) \sum f_i \\
& + \Big( \kappa_1 - \frac{\beta \nu^{n + 1}}{u} \Big) \psi - C  + \frac{a}{2(\nu^{n+1} - a)} \frac{\theta^2}{n} \kappa_1^2 \sum f_i \leq 0.
\end{aligned}
\]
Also note that $\sum f_i \geq c(n, k)$ by Newton-Maclaurin inequality, we thus obtain an upper bound for $\kappa_1$.

\vspace{2mm}

Case (ii).  Assume $\kappa_n > - \theta \kappa_1$.
Denote
\[ J = \{  i \,| \, f_1 \geq \theta^2 f_i  \},\quad \quad L = \{  i \,| \, f_1 < \theta^2 f_i  \}. \]

By \eqref{eq2G-1}, Cauchy-Schwartz inequality, \eqref{eq2G-11} and \eqref{eq2G-13},
\begin{equation} \label{eq2G-10}
\begin{aligned}
& - \sum_{i \in J} \frac{F^{ii} h_{11i}^2}{\kappa_1^2} + \frac{F^{ii} (\nu^{n+1})_i^2}{(\nu^{n+1} - a)^2}  \\
 \geq  & - \delta_3 \frac{F^{ii} (\nu^{n+1})_i^2}{(\nu^{n+1} - a)^2} - \Big( 1 + \frac{1}{\delta_3} \Big) \beta^2 \sum_{i \in J} f_i \frac{u_i^2}{u^4} \\
\geq &   - \frac{2 \delta_3}{(\nu^{n+1} - a)^2} \sum f_i - \frac{2 \delta_3}{(\nu^{n+1} - a)^2} \sum f_i \kappa_i^2 - \Big( 1 + \frac{1}{\delta_3} \Big)  \frac{\beta^2 f_1}{\theta^2 u^2}.
\end{aligned}
\end{equation}

Using an inequality of Andrews \cite{And} and Gerhardt \cite{Ger},
\[ - F^{ij, rs} h_{ij1} h_{rs1}  \geq  \sum\limits_{i \neq j} \frac{f_i - f_j}{\kappa_j - \kappa_i} h_{ij1}^2 \geq  2 \sum\limits_{i \geq 2} \frac{f_i - f_1}{\kappa_1 - \kappa_i} h_{i11}^2  \]
and taking $\theta = \frac{1}{2}$, we have
\begin{equation} \label{eq2G-12}
\begin{aligned}
  - \frac{F^{ij, rs} h_{ij1} h_{rs1}}{\kappa_1} - \sum_{i \in L} \frac{F^{ii} h_{11i}^2}{\kappa_1^2}
\geq  \frac{2 (1 - \theta)}{\kappa_1^2} \sum_{i \in L} f_i h_{11i}^2 - \sum_{i \in L} \frac{F^{ii} h_{11i}^2}{\kappa_1^2} = 0.
\end{aligned}
\end{equation}

By \eqref{eq2G-12}, \eqref{eq2G-10} with $\delta_3 = \frac{a^2}{8}$ and \eqref{eq2G-9} with $\delta_2 = \frac{a}{4}$, \eqref{eq2G-5} reduces to
\[
\begin{aligned}
& \Big( \frac{\beta}{u} + \frac{a}{\nu^{n+1} - a}  - \frac{2 \delta_3}{(\nu^{n+1} - a)^2} - \frac{1}{ \delta_2 (\nu^{n+1} - a)} - \frac{2}{\nu^{n+1} - a} \Big) \sum f_i \\
& + \Big( \kappa_1 - \frac{\beta \nu^{n + 1}}{u} \Big) \psi - C  + \frac{a}{2(\nu^{n+1} - a)} \sum f_i \kappa_i^2  - \Big( 1 + \frac{1}{\delta_3} \Big)  \frac{\beta^2 f_1}{\theta^2 u^2} \leq 0.
\end{aligned}
\]
Taking $\beta$ sufficiently large, we obtain an upper bound for $\kappa_1$.
\end{proof}

\vspace{4mm}

\section{Second order boundary estimate}

\vspace{4mm}

\subsection{Tangential-normal second derivative estimate}
For an arbitrary point on $\Gamma_{\epsilon}$, we may assume it to be the origin of $\mathbb{R}^n$. Choose a coordinate system so that the positive $x_n$ axis points to the interior normal of $\Gamma_{\epsilon}$ at $0$. There exists a uniform constant $r > 0$ such that $\Gamma_{\epsilon} \cap B_r (0)$ can be represented as a graph
\[ x_n = \rho ( x' ) = \frac{1}{2} \sum\limits_{s, t < n} B_{s t} x_{s} x_{t} + O ( |x'|^3 ), \quad x' = (x_1, \ldots, x_{n - 1}).  \]

Let $u \in C^3 (\overline{\Omega_{\epsilon}})$ be an admissible solution to \eqref{eq1-1} satisfying $u \geq \underline{u}$ in $\Omega_{\epsilon}$ and $u = \epsilon$ on $\Gamma_{\epsilon}$. For the tangential-normal second derivative estimate, consider for $t < n$,
\[ W = u_t +  u_n \rho_{t} - \frac{1}{2} \sum_{s < n} u_s^2. \]
By direct calculation,
\begin{equation} \label{eq3.21}
D_i W = u_{t i} + u_{n i} \rho_t + u_n \rho_{t i} - \sum_{s < n} u_s u_{s i},
\end{equation}
\begin{equation} \label{eq3.22}
\begin{aligned}
D_{ij} W =  u_{t i j} + u_{nij} \rho_t + u_{n i} \rho_{t j} + u_{n j} \rho_{t i} + u_n \rho_{tij}
 - \sum_{s < n} u_s u_{sij} - \sum_{s < n} u_{si} u_{sj}.
\end{aligned}
\end{equation}

Following \cite{Ivochkina89,Lin-Trudinger94,Ivoch-Lin-Tru96}, we write equation \eqref{eq1-1} in the following equivalent form
\begin{equation} \label{eq2B-1}
\mathcal{G}( D^2 u, D u, u ) =  F \Big( u \gamma^{l i} u_{ij} \gamma^{j m} + \delta_{l m} \Big) = \psi (x, u) w = \Psi(x, u, D u).
\end{equation}
Denote
\[ \mathcal{G}^{ij} = \frac{\partial\mathcal{G}}{\partial u_{ij}}, \quad \mathcal{G}^i = \frac{\partial\mathcal{G}}{\partial u_i}, \quad \mathcal{G}_u = \frac{\partial\mathcal{G}}{\partial u}, \quad \Psi^i = \frac{\partial \Psi}{\partial u_i},   \]
and
\[ L  =  \mathcal{G}^{ij} D_{ij} - \Psi^i D_i. \]

In order to give an estimation for $LW$, we need to choose a special local frame, which was utilized by Ivochkina \cite{Ivochkina89}. For fixed $x_0 \in \Omega_{\epsilon}$, choose a local frame $\tau_1, \ldots, \tau_n$ around $x_0$ on $\Omega_{\epsilon}$ such that
\[ \tau_{\alpha} + u_{\tau_\alpha} \partial_{n + 1}, \quad \alpha = 1, \ldots, n \]
is a local orthonormal frame around $(x_0, u(x_0))$ on $\Sigma^{\epsilon} = \{ (x, u(x)) \,|\, x \in \Omega_{\epsilon} \}$ and in addition they are principal directions at $(x_0, u(x_0))$ on $\Sigma^{\epsilon}$. In fact, we can choose
\[ \tau_{\alpha} = P_{\alpha l} u  \gamma^{li} \partial_i, \quad\quad \alpha = 1, \ldots, n,  \]
where $P = (P_{ij})$ is a constant orthogonal matrix such that
\[ P_{\alpha l} \, \frac{ u \gamma^{l i} u_{ij} \gamma^{j m} + \delta_{lm} }{w} (x_0) \, P_{\beta m} \]
is diagonal. Then we can verify that
\[\begin{aligned}
& \big\langle \tau_{\alpha} + u_{\tau_{\alpha}} \partial_{n + 1},  \tau_{\beta} + u_{\tau_{\beta}} \partial_{n + 1} \big\rangle =  \frac{1}{u^2} \big( \tau_{\alpha} \cdot \tau_{\beta} + u_{\tau_{\alpha}} u_{\tau_{\beta}} \big) \\
= & P_{\alpha l} \gamma^{l i} \big( \delta_{ij} + u_i u_j \big) \gamma^{j m} P_{\beta m} = \delta_{\alpha \beta},
\end{aligned}\]
where $\langle\, , \, \rangle$ is the inner product in hyperbolic space, and $\cdot$ is the inner product in Euclidean space. In addition,
\[ u_{\tau_\alpha} = u P_{\alpha l} \gamma^{l i} u_i, \quad \quad  u_{\tau_\alpha \tau_\beta} = u^2 P_{\alpha l} P_{\beta m} \gamma^{l i} \gamma^{m j} u_{ij}, \]
\begin{equation} \label{eq2B-7}
\begin{aligned}
& a_{\tau_\alpha \tau_\beta} =  h_{\tau_\alpha \tau_\beta} = P_{\alpha l} u \gamma^{li} P_{\beta m} u \gamma^{m j} h_{ij}  \\
= & P_{\alpha l} \Big( \frac{\delta_{lm}}{w} + \frac{ u \gamma^{l i} \gamma^{m j} u_{ij}}{w} \Big) P_{\beta m} = \frac{\delta_{\alpha \beta}}{w} + \frac{u_{\tau_\alpha \tau_\beta}}{u w},
\end{aligned}
\end{equation}
and $a_{\tau_\alpha \tau_\beta}(x_0)$ is diagonal.

Throughout this subsection, Greek letter $\alpha, \beta, \ldots$ are from $1$ to $n$.
Denote
\[ \mathcal{A}_{\alpha \beta} = P_{\alpha l} \big( \delta_{lm} +  u \gamma^{l i} \gamma^{m j} u_{ij} \big) P_{\beta m} = \delta_{\alpha \beta} + \frac{u_{\tau_\alpha \tau_\beta}}{u}.  \]
Equation \eqref{eq2B-1} can also be expressed as
\begin{equation} \label{eq2B-1-1}
\mathcal{G}( D^2 u, D u, u ) =  F ( \mathcal{A}_{\alpha \beta} ) = f(\lambda) = \Psi(x, u, D u).
\end{equation}
Then denote
\[ F^{\alpha\beta} = \frac{\partial F}{\partial \mathcal{A}_{\alpha\beta}}, \quad f_{\alpha} = \frac{\partial f}{\partial \lambda_{\alpha}}. \]
At $x_0$, we have
\[  \mathcal{A}_{\alpha \beta} = \lambda_{\alpha} \delta_{\alpha \beta}, \quad  F^{\alpha\beta} = f_{\alpha} \delta_{\alpha \beta}. \]
By direct calculation similar to Lemma \ref{Lemma1}, we have the following lemma.
\begin{lemma}  \label{Lemma2B}
At $x_0$, we have
\begin{equation*}
 \mathcal{G}^{ij}  =   u f_{\alpha} P_{\alpha l} \gamma^{l i} P_{\alpha m} \gamma^{m j},
\end{equation*}
\begin{equation*}
\mathcal{G}^i  =  - \frac{2 P_{\alpha l} \gamma^{l i} P_{\alpha q} u_{q} }{w} f_{\alpha} (\lambda_{\alpha} - 1),
\end{equation*}
\begin{equation*}
\mathcal{G}_u  = \frac{1}{u} \Big( \Psi - \sum f_{\alpha} \Big),
\end{equation*}
\[ \Psi^i = \psi(x, u) \frac{u_i}{w}.  \]
\end{lemma}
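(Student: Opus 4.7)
The plan is to view $\mathcal{G} = F\circ\mathcal{A}$, where $\mathcal{A}_{\alpha\beta} = P_{\alpha l}(\delta_{lm}+u\gamma^{li}u_{ij}\gamma^{jm})P_{\beta m}$, and apply the chain rule. The key simplification is that at $x_0$ the matrix $\mathcal{A}$ is diagonal with eigenvalues $\lambda_\alpha$, so $F^{\alpha\beta} = f_\alpha \delta_{\alpha\beta}$ and every double sum over Greek indices collapses to a single sum over $\alpha$.

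I would first dispose of the easy pieces. Since $\partial\mathcal{A}_{\alpha\beta}/\partial u_{st} = u P_{\alpha l}\gamma^{ls}\gamma^{mt}P_{\beta m}$, contracting with $F^{\alpha\beta}$ at $x_0$ gives the formula for $\mathcal{G}^{ij}$ immediately. For $\mathcal{G}_u$, note that $\partial\mathcal{A}_{\alpha\beta}/\partial u = (\mathcal{A}_{\alpha\beta} - \delta_{\alpha\beta})/u$, so $\mathcal{G}_u = u^{-1}\bigl(F^{\alpha\beta}\mathcal{A}_{\alpha\beta} - F^{\alpha\alpha}\bigr)$; Euler's theorem applied to the $1$-homogeneous $F = \sigma_k^{1/k}$ gives $F^{\alpha\beta}\mathcal{A}_{\alpha\beta} = F(\mathcal{A}) = \Psi$, yielding the claim. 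The identity $\Psi^i = \psi u_i/w$ is immediate from $\Psi = \psi w$ and $w = \sqrt{1+|Du|^2}$.

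The main work is in the formula for $\mathcal{G}^i$. Paralleling the calculation of $G^s$ in Lemma \ref{Lemma1}, I would first compute
\[\frac{\partial\gamma^{li}}{\partial u_s} = -\gamma^{lp}\frac{\partial\gamma_{pq}}{\partial u_s}\gamma^{qi} = -\frac{1}{w(1+w)}\bigl(\gamma^{ls}u_i + u_l\tilde{g}^{is}\bigr),\]
using $u_q\gamma^{qi} = u_i/w$ and $\gamma^{qi}\gamma^{qs} = \tilde{g}^{is}$. Writing $v_{\alpha i} := P_{\alpha m}\gamma^{mi}$ and $p_\alpha := P_{\alpha l}u_l$, the symmetry of $u_{ij}$ makes the two mirror-image contributions in $\partial\mathcal{A}_{\alpha\alpha}/\partial u_s$ equal, leaving
\[\frac{\partial\mathcal{A}_{\alpha\alpha}}{\partial u_s} = -\frac{2u}{w(1+w)}\bigl[v_{\alpha s}\bigl(v_{\alpha i}u_{ij}u_j\bigr) + p_\alpha \bigl(v_{\alpha i}u_{ij}\tilde{g}^{js}\bigr)\bigr].\]
The crucial step is to use orthogonality of $P$ together with the diagonalization identity to obtain $u v_{\alpha i} u_{ij}\gamma^{jm} = (\lambda_\alpha - 1)P_{\alpha m}$; contracting with $u_m$ evaluates the first bracketed scalar as $w(\lambda_\alpha-1)p_\alpha/u$, and further contraction with $\gamma^{ms}$ gives the second as $(\lambda_\alpha - 1)v_{\alpha s}/u$. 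A common factor $(\lambda_\alpha - 1)p_\alpha v_{\alpha s}/u$ emerges, the bracket collapses to $w+1$, and the prefactor cancels to yield $-\frac{2}{w}(\lambda_\alpha-1)p_\alpha v_{\alpha s}$, which is exactly the stated expression after relabelling.

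The main obstacle is keeping the tensorial bookkeeping straight: three non-orthonormal objects ($P$, $\gamma^{ij}$, $u_{ij}$) interact, and the clean $-2/w$ prefactor appears only after using the diagonalization identity in two slightly different but symmetric ways, so a mis-indexed substitution would obscure the cancellation. Nothing in the argument is conceptually hard: it is a direct chain-rule calculation exactly analogous to Lemma \ref{Lemma1}.
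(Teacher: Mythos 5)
Your proposal is correct and follows essentially the same route as the paper's proof: chain rule through $\mathcal{A}_{\alpha\beta}$, diagonality of $\mathcal{A}$ and $F^{\alpha\beta}$ at $x_0$, the identity $\partial\gamma_{pq}/\partial u_s = (\delta_{ps}u_q + u_p\gamma^{qs})/(1+w)$ together with $\gamma^{lp}u_p = u_l/w$ for $\mathcal{G}^i$, and Euler's theorem for $\mathcal{G}_u$. The only difference is presentational — you make the collapse to $(\lambda_\alpha-1)p_\alpha v_{\alpha s}(w+1)/u$ in the bracket explicit, whereas the paper records the same cancellation more tersely — so there is no substantive gap.
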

\begin{proof}
\begin{equation*}
 \mathcal{G}^{ij} = \frac{\partial F}{\partial\mathcal{A}_{\alpha\beta}} \frac{\partial \mathcal{A}_{\alpha\beta}}{\partial u_{ij}} = u F^{\alpha\beta} P_{\alpha l} \gamma^{li} \gamma^{m j} P_{\beta m} =  u f_{\alpha} P_{\alpha l} \gamma^{l i} P_{\alpha m} \gamma^{m j}.
\end{equation*}

\begin{equation*}
\mathcal{G}_u = F^{\alpha\beta} P_{\alpha l} \gamma^{l i} u_{ij} P_{\beta m} \gamma^{j m}  = \frac{1}{u} \Big( \Psi - \sum f_{\alpha} \Big).
\end{equation*}

\begin{equation*}
\mathcal{G}^s  =  \frac{\partial F}{\partial \mathcal{A}_{\alpha\beta}} \frac{\partial \mathcal{A}_{\alpha\beta}}{\partial u_s}
         = 2 u F^{\alpha\beta} P_{\alpha l} \frac{ \partial \gamma^{l i}}{\partial u_s} u_{ij} \gamma^{j m} P_{\beta m}.
\end{equation*}
Note that
\[ \frac{ \partial \gamma^{l i}}{\partial u_s} = - \gamma^{l p} \frac{\partial \gamma_{pq}}{\partial u_s}  \gamma^{q i}, \quad \quad
  \frac{\partial \gamma_{p q}}{\partial u_s}
=  \frac{\delta_{p s} u_q + u_p \gamma^{q s}}{1 + w },   \quad \quad
\gamma^{l p} u_p = \frac{u_l}{w}. \]
Therefore,
\begin{equation*}
\begin{aligned}
\mathcal{G}^s =  - 2 f_{\alpha} P_{\alpha l} \frac{\gamma^{ls} u_q w + u_l \gamma^{qs}}{(1 + w) w} P_{\alpha q} (\lambda_{\alpha} - 1)
 =  - \frac{2 P_{\alpha l} \gamma^{l s} P_{\alpha q} u_{q} }{w} f_{\alpha} (\lambda_{\alpha} - 1).
\end{aligned}
\end{equation*}
\end{proof}
Differentiating \eqref{eq2B-1-1}, we have
\begin{equation} \label{eq2B-2}
\mathcal{G}^{ij} u_{ijk} + \mathcal{G}^i u_{ik} + \mathcal{G}_u u_k = (\psi_{x_k} + \psi_u u_k) w + \Psi^i u_{ik}.
\end{equation}

By \eqref{eq3.21}, \eqref{eq3.22}, \eqref{eq2B-2} and Lemma \ref{Lemma2B}, we have
\begin{equation} \label{eq2B-8}
\begin{aligned}
L W = & (\psi_{x_t} + \psi_u u_t ) w + \rho_t (\psi_{x_n} + \psi_u u_n) w - \mathcal{G}_u (u_t + u_n \rho_t) \\
&  + 2 \mathcal{G}^{ij} u_{n i} \rho_{t j} + \mathcal{G}^{ij} u_n \rho_{tij} - \Psi^i u_n \rho_{t i} - \mathcal{G}^i D_i W + \mathcal{G}^i u_n \rho_{t i} \\
& - \sum_{s < n} (\psi_{x_s} w + \psi_u u_s w - \mathcal{G}_u u_s ) u_s  - \sum_{s < n} \mathcal{G}^{ij} u_{si} u_{sj} \\
\leq & C \sum f_{\alpha}  + 2 \mathcal{G}^{ij} u_{n i} \rho_{t j} - \mathcal{G}^i D_i W + \mathcal{G}^i u_n \rho_{t i} - \sum_{s < n} \mathcal{G}^{ij} u_{si} u_{sj}.
\end{aligned}
\end{equation}
By Lemma \ref{Lemma2B},
\begin{equation} \label{eq2B-4}
2 \mathcal{G}^{ij} u_{n i} \rho_{t j} =  2 f_{\alpha} P_{\alpha l} \gamma^{l i} (\lambda_{\alpha} - 1) P_{\alpha m} \gamma_{m n} \rho_{ti}
\leq   \delta_1 \sum f_{\alpha} \lambda_{\alpha}^2 + \frac{C}{\delta_1} \sum f_{\alpha},
\end{equation}
where $\delta_1$ is a positive constant to be determined later,
\begin{equation} \label{eq2B-5}
\begin{aligned}
& - \mathcal{G}^i D_i W + \mathcal{G}^i u_n \rho_{t i} \\
= & \frac{2 P_{\alpha q} u_q }{u w} f_{\alpha} (\lambda_{\alpha} - 1) D_{\tau_{\alpha}} W + \frac{2 P_{\alpha q} u_{q} }{w} f_{\alpha} ( - \lambda_{\alpha} + 1)  u_n P_{\alpha l} \gamma^{l i} \rho_{t i} \\
\leq & \delta_1 \sum f_{\alpha} \lambda_{\alpha}^2 + \frac{C}{\delta_1} \Big( \sum f_{\alpha} (D_{\tau_{\alpha}} W)^2 + \sum f_{\alpha} \Big),
\end{aligned}
\end{equation}
and
\begin{equation} \label{eq2B-6}
\begin{aligned}
& \sum_{s < n} \mathcal{G}^{ij} u_{is}  u_{js} = \frac{1}{u} f_{\alpha} (\lambda_{\alpha} - 1)^2 \sum_{s < n} ( P_{\alpha l} \gamma_{l s})^2 \\
 \geq & \frac{1}{2 u} \sum f_{\alpha} \lambda_{\alpha}^2 \sum_{s < n} ( P_{\alpha l} \gamma_{ls})^2 - C \sum f_{\alpha}.
\end{aligned}
\end{equation}
Taking \eqref{eq2B-4}--\eqref{eq2B-6} into \eqref{eq2B-8},
\begin{equation} \label{eq2B-9}
\begin{aligned}
 L W & \leq 2 \delta_1 \sum f_{\alpha} \lambda_{\alpha}^2 - \frac{1}{2 u} \sum f_{\alpha} \lambda_{\alpha}^2 \sum_{s < n} (P_{\alpha l} \gamma_{ls})^2 \\
& + \frac{C}{\delta_1} \sum f_{\alpha} + \frac{C}{\delta_1} \sum f_{\alpha} (D_{\tau_{\alpha}} W)^2.
\end{aligned}
\end{equation}

Using Ivochkina's method \cite{Ivochkina89}, we divide our discussion into two cases.

{\bf Case (i).}  Suppose for any $\alpha = 1, \ldots, n$,
\[ \sum_{s < n} (P_{\alpha l} \gamma_{ls})^2 \geq \epsilon_1^2,  \]
where $\epsilon_1$ is a positive constant to be determined. Picking $\delta_1 < \frac{\epsilon_1^2}{4 \sup_{\Omega_{\epsilon}} u}$, \eqref{eq2B-9} reduces to
\begin{equation} \label{eq2B-10}
 L W \leq  C \sum f_{\alpha} + C \sum f_{\alpha} (D_{\tau_{\alpha}} W)^2.
\end{equation}

{\bf Case (ii).} If for some $\beta \in \{ 1, \ldots, n \}$,
\[ \sum_{s < n} ( P_{\beta l} \gamma_{ls})^2 < \epsilon_1^2. \]
For any $\alpha \neq \beta$, consider the Laplace expansion along the $\alpha$th row
\[ \begin{aligned}
w = & \det (P_{\alpha l} \gamma_{l j}) \leq  \sum_{s < n} |P_{\alpha l} \gamma_{l s}| (n - 1)! w^{n - 1} + |P_{\alpha l} \gamma_{l n}| (n - 1)! \epsilon_1 w^{n - 2}.
\end{aligned} \]
Thus, we can pick any
\[ 0 < \epsilon_1 < \frac{1}{2 (n - 1)! (\sup{w})^{n - 2}}, \]
and obtain for $\alpha \neq \beta$,
\[  \sum_{s < n} (P_{\alpha l} \gamma_{ls})^2 > \epsilon_2^2 \quad \quad \mbox{with} \quad \epsilon_2 = \frac{1}{2 n! (\sup{w})^{n - 2}}. \]
Consequently, \eqref{eq2B-6} can be estimated as
\begin{equation}  \label{eq2B-3}
\begin{aligned}
 \sum_{s < n} \mathcal{G}^{ij} u_{is}  u_{js} \geq  \frac{\epsilon_2^2}{2 u} \sum_{\alpha \neq \beta} f_{\alpha} \lambda_{\alpha}^2 - C \sum f_{\alpha}.
\end{aligned}
\end{equation}

Next, we shall derive an inequality in place of \eqref{eq2B-4}.
Note that \eqref{eq2B-4} can be replaced by
\[
\begin{aligned}
 2 \mathcal{G}^{ij} u_{n i} \rho_{t j}
\leq \delta_2 \sum_{\alpha \neq \beta} f_{\alpha} \lambda_{\alpha}^2 + \frac{C}{\delta_2} \sum f_{\alpha}  + 2 f_{\beta} P_{\beta l} \gamma^{l i} \lambda_{\beta} P_{\beta m} \gamma_{m n} \rho_{ti},
\end{aligned}
\]
where $\delta_2$ is a positive constant to be determined, and
\[ f_{\beta} \lambda_{\beta} =  \Psi - \sum_{\alpha \neq \beta} f_{\alpha} \lambda_{\alpha}. \]
Therefore, \eqref{eq2B-4} can be replaced by
\begin{equation} \label{eq2B-11}
2 \mathcal{G}^{ij} u_{n i} \rho_{t j}
\leq  2 \delta_2 \sum_{\alpha \neq \beta} f_{\alpha} \lambda_{\alpha}^2 + \frac{C}{\delta_2} \sum f_{\alpha}.
\end{equation}
Similarly, we can replace \eqref{eq2B-5} by the following inequality.
\begin{equation} \label{eq2B-15}
\begin{aligned}
& - \mathcal{G}^i D_i W + \mathcal{G}^i u_n \rho_{t i} \\
\leq \, & \frac{2 P_{\beta q} u_{q} }{u w} f_{\beta} \lambda_{\beta}  D_{\tau_{\beta}} W
+ \frac{3 \delta_2 }{2}  \sum_{\alpha \neq \beta} f_{\alpha} \lambda_{\alpha}^2 + \frac{C}{\delta_2} \Big( \sum f_{\alpha} (D_{\tau_{\alpha}} W)^2 + \sum f_{\alpha} \Big).
\end{aligned}
\end{equation}

Now, we need to give an estimation for $\frac{2 P_{\beta q} u_{q} }{u w} f_{\beta} \lambda_{\beta}  D_{\tau_{\beta}} W$. We use Ivochkina's method \cite{Ivochkina89} to divide the discussion into two subcases.

\vspace{2mm}

{\bf Subcase (i).} Suppose $2 \sigma_{k - 1} (\lambda | \beta) > \sigma_{k - 1}$. Then
\[ \begin{aligned}
 \frac{2 P_{\beta q} u_{q} }{u w} f_{\beta} \lambda_{\beta}  D_{\tau_{\beta}} W
& =   \frac{2 P_{\beta q} u_{q} }{u w} \Psi D_{\tau_{\beta}} W - \frac{2 P_{\beta q} u_{q} }{u w} \sum_{\alpha \neq \beta} f_{\alpha} \lambda_{\alpha} D_{\tau_{\beta}} W \\
& \leq  C  |D W| + \frac{\delta_2}{2} \sum_{\alpha \neq \beta} f_{\alpha} \lambda_{\alpha}^2 + \frac{C}{\delta_2} \sum_{\alpha \neq \beta} f_{\alpha} (D_{\tau_{\beta}} W)^2 \\
& \leq C  |D W| + \frac{\delta_2}{2} \sum_{\alpha \neq \beta} f_{\alpha} \lambda_{\alpha}^2 + \frac{C}{\delta_2} (2 n - 2 k + 1) f_{\beta} (D_{\tau_{\beta}} W)^2.
\end{aligned} \]
Hence, \eqref{eq2B-15} reduces to
\[  - \mathcal{G}^i D_i W + \mathcal{G}^i u_n \rho_{t i}
\leq  C  |D W| + 2 \delta_2  \sum_{\alpha \neq \beta} f_{\alpha} \lambda_{\alpha}^2 + \frac{C}{\delta_2} \Big( \sum f_{\alpha} (D_{\tau_{\alpha}} W)^2 + \sum f_{\alpha} \Big).
\]
Taking this inequality, \eqref{eq2B-11}, \eqref{eq2B-3} into \eqref{eq2B-8}, and choosing $\delta_2 < \frac{\epsilon_2^2}{8 \sup u}$, we obtain
\begin{equation} \label{eq2B-13}
 L W \leq C |D W| + C \Big( \sum f_{\alpha} (D_{\tau_{\alpha}} W)^2 + \sum f_{\alpha} \Big).
\end{equation}

{\bf Subcase (ii).}  Suppose $2 \sigma_{k - 1} (\lambda | \beta) \leq \sigma_{k - 1}$.
Then we have $\lambda_{\beta} > 0$.

If $\sigma_{k} (\lambda | \beta) \geq 0$, then
\[ 0 < f_{\beta} \lambda_{\beta} = \frac{1}{k} \sigma_{k}^{\frac{1}{k} - 1} \Big( \sigma_{k} - \sigma_{k} (\lambda | \beta) \Big) \leq  \frac{1}{k} \sigma_{k}^{\frac{1}{k}}. \]
Consequently,
\begin{equation} \label{eq2B-12}
\Big\vert \frac{2 P_{\beta q} u_{q} }{u w} f_{\beta} \lambda_{\beta}  D_{\tau_{\beta}} W \Big\vert \leq C |D W|.
\end{equation}

Now we assume $\sigma_{k} (\lambda | \beta) < 0$. By \eqref{eq3.21} and $\partial_i = \frac{1}{u} P_{\alpha l} \gamma_{li} \tau_{\alpha}$,
\[\begin{aligned}
& D_{\tau_{\beta}} W =  u_{t \tau_{\beta}} + \rho_{t \tau_{\beta}} u_n + \rho_t u_{n \tau_{\beta}} - \sum_{s < n} u_s u_{s \tau_{\beta}} \\
= & (P_{\beta l} \gamma_{lt} + P_{\beta l} \gamma_{ln} \rho_t) (\lambda_{\beta} - 1) + \rho_{t \tau_{\beta}} u_n -  \sum_{s < n} u_s P_{\beta l} \gamma_{l s} (\lambda_{\beta} - 1).
\end{aligned}\]
It follows that,
\begin{equation} \label{eq2B-16}
\begin{aligned}
& \frac{2 P_{\beta q} u_{q} }{u w} f_{\beta} \lambda_{\beta}  D_{\tau_{\beta}} W =  \frac{1}{k} \sigma_k^{\frac{1}{k} - 1} \Big(\sigma_k - \sigma_{k}(\lambda | \beta) \Big) \frac{2 P_{\beta q} u_{q} }{u w}  D_{\tau_{\beta}} W \\
 \leq & C |D W| -  \frac{1}{k} \sigma_k^{\frac{1}{k} - 1} \sigma_{k}(\lambda | \beta) \frac{2 P_{\beta q} u_{q} }{u w}  D_{\tau_{\beta}} W \\
 \leq & C |D W| -  \frac{1}{k} \sigma_k^{\frac{1}{k} - 1} \sigma_{k}(\lambda | \beta) \Big( C ( \epsilon_1 + |\rho_t| ) \lambda_{\beta} + C \Big).
\end{aligned}
\end{equation}
Note that
\begin{equation} \label{eq2B-17}
\begin{aligned}
- \frac{1}{k} \sigma_k^{\frac{1}{k} - 1} \sigma_{k}(\lambda | \beta) =  - \frac{1}{k} \sigma_k^{\frac{1}{k} - 1} \Big(\sigma_k - \lambda_{\beta} \sigma_{k - 1} (\lambda | \beta)\Big)
=   \frac{k - 1}{k} \sigma_k^{\frac{1}{k}} - \sum_{\alpha \neq \beta} f_{\alpha} \lambda_{\alpha}.
\end{aligned}
\end{equation}
Also, using an inequality of Ivochkina \cite{Ivochkina89} (see also an improved version of Lin-Trudinger \cite{Lin-Trudinger94-1})
\[ \sigma_{k + 1} (\lambda | \beta) \leq C(n, k) \sum_{\alpha \neq \beta} \sigma_{k - 1} (\lambda | \alpha) \lambda_{\alpha}^2, \]
we have
\begin{equation} \label{eq2B-18}
\begin{aligned}
& -  \frac{1}{k} \sigma_k^{\frac{1}{k} - 1} \sigma_{k}(\lambda | \beta) \lambda_{\beta} =
-  \frac{1}{k} \sigma_k^{\frac{1}{k} - 1}  \Big( \sigma_{k + 1} - \sigma_{k + 1} (\lambda | \beta) \Big) \\
= &    \frac{1}{k} \sigma_k^{\frac{1}{k} - 1} \Big( \frac{1}{k} \sum_{\alpha \neq \beta} \sigma_{k - 1} (\lambda | \alpha) \lambda_{\alpha}^2 + \frac{1 + k}{k} \sigma_{k + 1} (\lambda | \beta) - \frac{1}{k} \sigma_k \sigma_1(\lambda | \beta) \Big) \\
\leq &  \frac{1}{k} \sigma_k^{\frac{1}{k} - 1} \Big(  C(n, k) \sum_{\alpha \neq \beta} \sigma_{k - 1} (\lambda | \alpha) \lambda_{\alpha}^2 - \frac{1}{k} \sigma_k \sigma_1(\lambda | \beta) \Big)
\leq  C \sum_{\alpha \neq \beta} f_{\alpha} \lambda_{\alpha}^2  + C,
\end{aligned}
\end{equation}
where the last inequality is true because if $k \geq 2$, then $\sigma_1(\lambda | \beta) > 0$; while if $k = 1$,
\[ C(n, k) \sum_{\alpha \neq \beta} \sigma_{k - 1} (\lambda | \alpha) \lambda_{\alpha}^2 - \frac{1}{k} \sigma_k \sigma_1(\lambda | \beta) \leq C \sum_{\alpha \neq \beta} \lambda_{\alpha}^2 + C. \]
By \eqref{eq2B-17} and \eqref{eq2B-18}, inequality \eqref{eq2B-16} becomes
\begin{equation} \label{eq2B-19}
 \frac{2 P_{\beta q} u_{q} }{u w} f_{\beta} \lambda_{\beta}  D_{\tau_{\beta}} W \leq C |D W| + \Big( \frac{\delta_2}{4} + C ( \epsilon_1 + |\rho_t| ) \Big) \sum\limits_{\alpha \neq \beta} f_{\alpha} \lambda_{\alpha}^2 + \frac{C}{\delta_2} \sum f_{\alpha}.
\end{equation}
Taking \eqref{eq2B-19} (which covers the case \eqref{eq2B-12}) into \eqref{eq2B-15},  then taking the resulting inequality as well as \eqref{eq2B-11}, \eqref{eq2B-3} into \eqref{eq2B-8}, and choosing $\epsilon_1$, $r$ further small depending on $\delta_2$, $\delta_2 < \frac{\epsilon_2^2}{8 \sup u}$, we obtain
\begin{equation} \label{eq2B-14}
L W \leq C \Big( |D W| + \sum f_{\alpha} + \sum f_{\alpha} (D_{\tau_{\alpha}} W)^2 \Big).
\end{equation}
Note that \eqref{eq2B-14} covers the cases \eqref{eq2B-10} and \eqref{eq2B-13}.

Now, take
\[ V = 1 - e^{- a W} - b |x|^2. \]
By direct calculation, Lemma \ref{Lemma2B} and \eqref{eq2B-14}, we can verify that over $\Omega_{\epsilon} \cap B_r (0)$,
\[ \begin{aligned}
L V \leq &  C \Big( |D V| + 2 b r \Big) + a e^{- a W} C \Big( \sum f_{\alpha} + \sum f_{\alpha} (D_{\tau_{\alpha}} W)^2 \Big) \\
& - a^2 e^{- a W}  \frac{1}{u} \sum f_{\alpha} (D_{\tau_{\alpha}} W)^2 - 2 b  \frac{u}{w^2} \sum f_{\alpha} + C b r.
\end{aligned} \]
Choosing $a$ large, then $b$ large, and $r$ small, we have
\begin{equation} \label{eq2B-20}
L V \leq  C |D V|.
\end{equation}

Now, we only need the following linear operator
\[ \mathcal{L}  =  \mathcal{G}^{ij} D_{ij}. \]
By \eqref{eq2B-20}, we have on $\Omega_{\epsilon} \cap B_r (0)$,
\begin{equation} \label{eq2B-21}
\mathcal{L} V  \leq  C |D V|.
\end{equation}

\vspace{2mm}

\subsection{Barrier construction}

Let $d(x)$ be the distance from $x$ to $\Gamma_{\epsilon}$ in $\mathbb{R}^n$. Consider the barrier as in \cite{Lin-Trudinger94},
\[ B(x) = - a_0 |x|^2 + c_0 ( e^{- b_0 d(x)} - 1 ), \]
where $a_0$, $b_0$ and $c_0$ are positive constants to be determined.
By assumption \eqref{eq1-3}, the principal curvatures of $\Gamma_{\epsilon}$ with respect to $\gamma$ satisfy
\[ (\kappa'_1, \ldots, \kappa'_{n - 1}) \in \Gamma'_k \quad \mbox{on} \,\, \Gamma_{\epsilon}. \]
Choose $r$ sufficiently small such that $d$ is $C^4$ within $\{ x \in \overline{\Omega_{\epsilon}} | d(x) \leq r \}$ and
\[ \Big( \frac{\kappa'_1}{1 - \kappa'_1 d}, \cdots, \frac{\kappa'_{n - 1}}{1 - \kappa'_{n - 1} d} \Big) \in \Gamma'_k. \]
Choose $a_0$ sufficiently large (depending on $r$) such that
\begin{equation} \label{eq2B-24}
B \leq V  \quad \mbox{on} \quad \partial(\Omega_{\epsilon} \cap B_r (0)).
\end{equation}

For fixed $x \in \Omega_{\epsilon} \cap B_r (0)$, let $d(x) = |x - y|$ with $y \in \Gamma_{\epsilon}$. We shall use the principal coordinate system at $y$. Denote $\kappa'_1, \ldots, \kappa'_{n - 1}$ the principal curvatures of $\Gamma_{\epsilon}$ at $y$. Then we have
\[ D^2 B =  - 2 a_0 I + c_0 b_0 e^{- b_0 d}  \mbox{diag} \Big( \frac{\kappa'_1}{1 - \kappa'_1 d}, \cdots, \frac{\kappa'_{n - 1}}{1 - \kappa'_{n - 1} d}, b_0 \Big). \]

By concavity of $\mathcal{G} (r, p, z)$ with respect to $r$,
\begin{equation} \label{eq2B-22}
\begin{aligned}
& \mathcal{L} B - C |D B| = \mathcal{G}^{ij} (D_{ij} B - d_0 \delta_{ij} ) + d_0 \sum \mathcal{G}^{ii} - C |D B|  \\
\geq \,& \mathcal{G}(D^2 B - d_0 I, D u, u) - \mathcal{G}(D^2 u, D u, u) + \mathcal{G}^{ij} D_{ij} u + d_0 \sum \mathcal{G}^{ii} - C |D B| \\
\geq \, & \mathcal{G}(D^2 B - d_0 I, D u, u) - \sum f_{\alpha} + \frac{d_0 u}{w^2} \sum f_{\alpha}  - C \Big(2 a_0 r + c_0 b_0 e^{- b_0 d} \Big) \\
\geq \, &  \mathcal{G}(D^2 B - d_0 I, D u, u)  - C c_0 b_0 e^{- b_0 d},
\end{aligned}
\end{equation}
where the last inequality is true when constant $d_0$ is sufficiently large.

Note that if
\begin{equation} \label{eq2B-25}
\lambda\Big( u ( B_{ij} - d_0 \delta_{ij}) + \delta_{ij} + u_i u_j \Big) \in \Gamma_{k + 1},
\end{equation}
then
\[ \begin{aligned}
 \mathcal{G}(D^2 B - d_0 I, D u, u) = & F \Big( \gamma^{\alpha i} \big( u ( B_{ij} - d_0 \delta_{ij}) + \delta_{ij} + u_i u_j  \big) \gamma^{j \beta} \Big) \\
 \geq & \frac{1}{(1 + |D u|^2)^{1/k}} F \Big(  u ( B_{ij} - d_0 \delta_{ij}) + \delta_{ij} + u_i u_j  \Big).
\end{aligned} \]
Take this inequality into \eqref{eq2B-22},
\begin{equation} \label{eq2B-26}
 \mathcal{L} B - C |D B|
 \geq   c_1 F \Big(  u ( B_{ij} - d_0 \delta_{ij}) + \delta_{ij} + u_i u_j  \Big) - C c_0 b_0 e^{- b_0 d},
\end{equation}
where $c_1$ is a fixed positive constant.

Choose $b_0$ sufficiently large such that
\[ \Lambda := \mbox{diag} \Big( \frac{\kappa'_1}{1 - \kappa'_1 d}, \cdots, \frac{\kappa'_{n - 1}}{1 - \kappa'_{n - 1} d}, b_0 \Big) \in \Gamma_{k + 1} \quad \mbox{and} \quad    c_1 F (  u \Lambda ) > C. \]
Then choose $c_0$ sufficiently large such that
\[ \lambda\Big( - (2 a_0 + d_0) I + c_0 b_0 e^{- b_0 d} \Lambda \Big) \in \Gamma_{k + 1} \]
and
\[  c_1 F \Big(  - \frac{u (2 a_0 + d_0) e^{ b_0 d}}{c_0 b_0} I + u \Lambda  \Big) > C. \]
Therefore, \eqref{eq2B-25} is true and \eqref{eq2B-26} reduces to
\begin{equation} \label{eq2B-27}
 \mathcal{L} B  \geq  C |D B|.
\end{equation}
By \eqref{eq2B-21}, \eqref{eq2B-27}, \eqref{eq2B-24}, the maximum principle and $V(0) = B(0)$, we obtain $u_{tn} (0) \geq - \frac{c_0 b_0}{a}$. If we replace $W$ by $- u_t - u_n \rho_{t} - \frac{1}{2} \sum_{s < n} u_s^2$, by the same argument, we will obtain $u_{tn} (0) \leq \frac{c_0 b_0}{a}$.

\vspace{2mm}

\subsection{Double normal derivative estimate}~

\vspace{1mm}

We shall give an upper bound for $D_{\gamma\gamma} u$ on $\Gamma_{\epsilon}$.
For $x \in \Gamma_{\epsilon}$, define
\[ \tilde{d} (x)  = w \, \mbox{dist} ( \kappa'(x), \partial \Gamma'_{k - 1} ), \]
where $\kappa' = (\kappa'_1, \ldots, \kappa'_{n-1})$ are the roots of
\[ \det ( \kappa'_{\zeta}  g_{\alpha\beta} - h_{\alpha \beta}  ) = 0, \]
and $(g_{\alpha \beta})$, $(h_{\alpha \beta})$ are the first $(n - 1) \times (n - 1)$ principal minors of $(g_{ij})$ and $(h_{ij})$ with the indices $\alpha, \beta < n$ running over the tangential directions on $\Gamma_{\epsilon}$ and $n$ indicates the normal direction to $\Gamma_{\epsilon}$. Throughout this subsection, the range for Greek letter $\alpha, \beta, \ldots$ is from $1$ to $n - 1$.

Here in this subsection, $\kappa'$ is different from the one defined in the introduction. Note that $\kappa' \in \Gamma'_{k - 1}$ since $\kappa \in \Gamma_k$.
Assume the minimum of $\tilde{d}(x)$ along $\Gamma_{\epsilon}$ is achieved at $0 \in \Gamma_{\epsilon}$, at which we fix the coordinate system with the positive $x_n$ axis points to the interior normal of $\Gamma_{\epsilon}$ at $0$. We want to prove that $\tilde{d}(0)$ has a uniform positive lower bound.

Choose a local orthonormal frame $e_1, \ldots, e_{n}$ around $0$ on $\Omega_{\epsilon}$, obtained by parallel translation of a local orthonormal frame $e_1, \ldots, e_{n - 1}$ around $0$ on $\Gamma_{\epsilon}$ satisfying
\[ (h_{\alpha \beta})(0)  \text{ is diagonal with }   h_{11}(0) \leq \ldots \leq h_{n - 1, n - 1}(0), \]
and $e_n = \gamma$ along the lines perpendicular to $\Gamma_{\epsilon}$ on $\Omega_{\epsilon}$.
In what follows in this subsection, we may simply write a Greek letter $\alpha$ instead of $e_{\alpha}$ in the subscripts with $\alpha < n$; while use a Latin letter $s$ in the subscripts to represent $\partial_s$. We can check that the local frame
\[ \xi_1 = \epsilon e_1, \ldots, \xi_{n-1} = \epsilon e_{n - 1}\]
around $0$ on $\Gamma_{\epsilon}$ satisfies
\[ g_{\xi_\alpha \xi_\beta} = \delta_{\alpha\beta}, \quad  h_{\xi_\alpha \xi_\beta}(0) = \kappa'_{\alpha}(0) \delta_{\alpha\beta}, \quad \kappa'_1 (0) \leq \ldots \leq \kappa'_{n-1} (0). \]

By Lemma 6.1 of \cite{CNSIII}, there exists $\mu' = (\mu_1, \ldots, \mu_{n-1}) \in \mathbb{R}^{n - 1}$ with
\[ \mu_1 \geq \ldots \geq \mu_{n - 1} \geq 0 \text{ and }  \sum \mu_{\alpha}^2 = 1 \]
such that
$ \Gamma'_{k-1} \subset \{ \kappa' \in \mathbb{R}^{n-1} \,|\, \mu' \cdot \kappa' > 0 \}$ and
\[  \tilde{d}(0) = w \sum \mu_{\alpha} \kappa'_{\alpha} (0) =   \sum \mu_{\alpha} \big( 1 + u u_{\alpha \alpha} \big) (0). \]
We may assume $\tilde{d} (0) \leq  \frac{1}{2}$, for otherwise we are done. Note that $u_{\alpha\beta}  = u_{\gamma} d_{\alpha \beta}$ and $u_{\gamma} \geq \underline{u}_{\gamma} > 0$ on $\Gamma_{\epsilon}$. Hence we obtain
\[ \sum \mu_{\alpha} d_{\alpha \alpha} (0) \leq - c_2
\]
for some positive constant $c_2$.
By continuity of $d_{\alpha \alpha}$ at $0$,
\[    \sum \mu_{\alpha} \,d_{\alpha\alpha} (x) \leq - \frac{c_2}{2}  \quad \quad\mbox{in} \quad
 \Omega_\epsilon \cap B_{r}(0)  \]
for some positive constant $r$.
Also, by Lemma 6.2 of \cite{CNSIII}, for any $x \in \Gamma_{\epsilon}$ near $0$,
\[  \sum \mu_{\alpha} \big( 1 + u u_{\gamma} d_{\alpha \alpha} \big)  =  \sum \mu_{\alpha} \big( 1 + u u_{\alpha \alpha} \big)
\geq w \sum \mu_{\alpha} \kappa'_{\alpha} (x) \geq \tilde{d} (x) \geq \tilde{d} (0). \]
Thus, we can define in $\Omega_\epsilon \cap B_{r}(0)$,
\[ \Phi  =   \frac{1}{\epsilon \sum \mu_{\alpha} d_{\alpha \alpha} } \Big( \tilde{d}(0) - \sum \mu_{\alpha} \Big) - D_{e_n} u - \frac{K}{2} \sum_{s < n} u_s^2. \]
Obviously,  $\Phi +  \frac{K}{2} \sum_{s < n} u_s^2 \geq 0$ on $\Gamma_{\epsilon} \cap B_{r} (0)$ and $\Phi(0) = 0$.
In addition, similar as how we derive \eqref{eq2B-14}, by choosing $K$ sufficiently large we have in $\Omega_\epsilon \cap B_{r}(0)$,
\[ L(\Phi) \leq   C \Big( |D \Phi| + \sum f_{\alpha} + \sum f_{\alpha} (D_{\tau_{\alpha}} \Phi)^2 \Big).  \]
Taking $V = 1 - e^{- a \Phi} - b |x|^2$, and choosing $a$ sufficiently large, then $b$ sufficiently large, we can verify that over $\Omega_{\epsilon} \cap B_r (0)$ for sufficiently small $r$,
$L V \leq  C |D V|$.
Thus, on $\Omega_{\epsilon} \cap B_r (0)$,
$\mathcal{L} V  \leq  C |D V|$.
By the maximum principle, we have $B_n(0) \leq V_n(0)$. Therefore, $u_{nn} (0) \leq C$ and
$|D^2 u (0)| \leq C$. Consequently, we obtain a bound for all principal curvatures of graph of $u$ at $0$. Since $\psi > 0$ on $\Gamma_{\epsilon}$,
$\mbox{dist} ( \kappa(0), \partial\Gamma_k)$ has a uniform positive lower bound. Consequently,
$\tilde{d}(0)$ has a uniform positive lower bound.  By applying Lemma 1.2 of \cite{CNSIII} and similar to the proof in \cite{Sui2019}, we proved
$u_{\gamma\gamma} \leq C$ on $\Gamma_{\epsilon}$.

\vspace{4mm}

\section{The approximating Dirichlet problem \eqref{eqn10}}

\vspace{4mm}

In this section, we write equation \eqref{eq1-1} as
\begin{equation} \label{eqn17}
G( D^2 u, D u, u ) = F ( a_{ij} ) =  f( \lambda ( a_{ij} ) ) = \psi( x, u ).
\end{equation}

\vspace{1mm}

\subsection{Existence}
Motivated by Su \cite{Su16}, we construct a two-step continuity process to prove the existence.
For convenience, denote
\[ G[u] = \, G (D^2 u, D u, u), \quad  G^{ij}[u] = G^{ij} (D^2 u, D u, u), \quad \mbox{etc.}\]
Let $\delta$ be a small positive constant such that
\begin{equation} \label{eq3-14}
G[\underline{u}] =  G( D^2 \underline{u}, D \underline{u}, \underline{u} ) >  \delta \underline{u} \quad\mbox{in}\,\, \Omega_{\epsilon}.
\end{equation}
For $t \in [0, 1]$, consider the following two equations.
\begin{equation} \label{eq3-12}
\left\{ \begin{aligned}
G (D^2 u, D u, u)  =  &  \Big(  ( 1 - t ) \frac{G[\underline{u}](x)}{ \underline{u} } + t \delta \Big)  u \quad & \mbox{in} \,\, \Omega_{\epsilon}, \\
u   = &  \epsilon \quad & \mbox{on} \,\, \Gamma_{\epsilon}.
\end{aligned} \right.
\end{equation}
\begin{equation} \label{eq3-13}
\left\{ \begin{aligned}
G (D^2 u, D u, u)  =  &  ( 1 - t ) \delta u  +  t  \psi(x, u) \quad & \mbox{in} \,\, \Omega_{\epsilon}, \\
u   = &  \epsilon  \quad & \mbox{on} \,\, \Gamma_{\epsilon}.
\end{aligned} \right.
\end{equation}

\begin{lemma} \label{Lemma6-1}
For $x \in \overline{\Omega_{\epsilon}}$ and a positive $C^2$ function $u$ which is admissible near $x$, if
\[G [u] (x) =  F ( a_{ij}[u] )(x) = f (\kappa)(x) = \psi(x) u, \]
then we have
\[ G_u [u] (x) - \psi(x) < 0. \]
\end{lemma}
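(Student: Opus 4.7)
The plan is to do a direct computation using the explicit formula for $G_u$ derived in Lemma \ref{Lemma1}, combined with the homogeneity of $f=\sigma_k^{1/k}$.

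First, recall from Lemma \ref{Lemma1} that
\[ G_u[u](x) = \frac{1}{u}\Big( F^{ij} a_{ij} - \frac{1}{w}\sum f_i \Big). \]
Since $f(\kappa) = \sigma_k^{1/k}(\kappa)$ is positively homogeneous of degree one, Euler's identity gives $F^{ij} a_{ij} = \sum f_i \kappa_i = f(\kappa)$. By the hypothesis $f(\kappa)(x) = \psi(x)\,u$, so
\[ F^{ij} a_{ij} = \psi(x)\,u \quad \text{at } x. \]
Substituting this into the formula for $G_u$ gives
\[ G_u[u](x) - \psi(x) = \psi(x) - \frac{1}{u\,w}\sum f_i - \psi(x) = -\frac{1}{u\,w}\sum f_i. \]

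It remains to check that $\sum f_i > 0$ at $x$. Since $u$ is admissible near $x$, we have $\kappa \in \Gamma_k$, so each
\[ f_i = \frac{1}{k}\sigma_k^{1/k-1}\sigma_{k-1}(\kappa\,|\,i) > 0 \]
by the standard property of Gårding's cones (this is exactly what was used in \eqref{eq2.10}). Together with $u>0$ and $w\ge 1$, this yields $G_u[u](x)-\psi(x) = -\frac{1}{uw}\sum f_i < 0$, which is the desired inequality.

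There is no real obstacle here: the lemma is essentially an algebraic identity made possible by the $1$-homogeneity of $f$, which causes the term $F^{ij}a_{ij}/u$ in $G_u$ to exactly cancel $\psi(x)$, leaving the strictly negative curvature-trace remainder $-\frac{1}{uw}\sum f_i$. The only point requiring care is to keep in mind that in \eqref{eq3-12}--\eqref{eq3-13} the right-hand side depends on $u$ only through a linear factor of the form $\tilde\psi(x)\,u$, so that the partial derivative $G_u-\partial_u(\tilde\psi\,u) = G_u - \tilde\psi$ is exactly what is being estimated; this negativity is precisely what allows the maximum principle / method of continuity argument in the next subsection to go through.
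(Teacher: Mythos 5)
Your proof is correct and follows essentially the same route as the paper: you invoke the formula for $G_u$ from Lemma \ref{Lemma1}, apply Euler's identity for the $1$-homogeneous $f = \sigma_k^{1/k}$ to get $F^{ij}a_{ij} = \sum f_i\kappa_i = \psi(x)u$, and conclude $G_u[u](x) - \psi(x) = -\frac{1}{uw}\sum f_i < 0$. The only cosmetic difference is that you spell out the positivity of $\sum f_i$ via the G\r arding cone, while the paper takes it as known.
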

\begin{proof}
\begin{equation*}
G_u  = F^{ij} \frac{1}{w} \gamma^{ik} u_{k l} \gamma^{lj} = \frac{1}{u} \Big( \sum f_i \kappa_i - \frac{1}{w} \sum f_i \Big).
\end{equation*}
Since $f$ is homogeneous of degree one, thus $\sum f_i \kappa_i = \psi(x) u$. Consequently,
\[ G_u [ u ] (x) - \psi(x)  =   - \frac{1}{w u} \sum f_i  < 0. \]
\end{proof}

\begin{lemma}  \label{Lemma6-2}
For $t \in [0, 1]$,  let $\underline{U}$ and $u$ be any admissible subsolution and solution of \eqref{eq3-12}. Then $u \geq \underline{U}$ in $\Omega_{\epsilon}$. In particular, \eqref{eq3-12} has at most one admissible solution.
\end{lemma}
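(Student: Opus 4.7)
The plan is a contradiction argument that reduces \eqref{eq3-12} at the maximum point to a scalar ODE in the auxiliary variable $s$ obtained by freezing $Du$ and $D^{2}u$. Write the right-hand side of \eqref{eq3-12} as $\tilde{\psi}_{t}(x)\,u$ with $\tilde{\psi}_{t}(x) := (1-t)\,G[\underline{u}](x)/\underline{u}(x) + t\delta > 0$, and suppose for contradiction that $\omega := \underline{U} - u$ attains a positive maximum on $\overline{\Omega_{\epsilon}}$. Since $\underline{U} \leq \epsilon = u$ on $\Gamma_{\epsilon}$, this maximum is attained at an interior point $x_{0} \in \Omega_{\epsilon}$, where
\[ Du(x_{0}) = D\underline{U}(x_{0}), \quad D^{2}u(x_{0}) \geq D^{2}\underline{U}(x_{0}), \quad u(x_{0}) < \underline{U}(x_{0}). \]
At $x_{0}$ I would study the one-variable function
\[ \phi(s) := G\bigl(D^{2}u(x_{0}),\, Du(x_{0}),\, s\bigr) - s\,\tilde{\psi}_{t}(x_{0}), \quad s \in [u(x_{0}),\, \underline{U}(x_{0})], \]
whose associated matrix $A(s) = \tfrac{1}{w}\bigl(\delta_{ij} + s\,\gamma^{ik}\,u_{kl}\,\gamma^{lj}\bigr)(x_{0})$ is affine in $s$.

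The first step is to verify $A(s) \in \Gamma_{k}$ for every such $s$, so that $\phi$ is smooth and $F^{ij}(A(s))$ is positive definite. At $s = u(x_{0})$ this is just $A[u](x_{0}) \in \Gamma_{k}$; at $s = \underline{U}(x_{0})$,
\[ A(\underline{U}(x_{0})) = A[\underline{U}](x_{0}) + \tfrac{\underline{U}(x_{0})}{w}\,\gamma^{ik}(u_{kl} - \underline{U}_{kl})\gamma^{lj}(x_{0}), \]
which is a $\Gamma_{k}$ element plus a positive semidefinite matrix, and hence lies in $\Gamma_{k}$ (an open convex cone plus an element of its closure stays in the cone). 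Since $A(s)$ is a convex combination of the two endpoints, convexity of $\Gamma_{k}$ gives $A(s) \in \Gamma_{k}$ throughout.

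The heart of the argument is a simple ODE for $\phi$. Using Lemma \ref{Lemma1} (the formula for $G_{u}$) together with the $1$-homogeneity of $f = \sigma_{k}^{1/k}$ (so that $F^{ij}a_{ij} = f(\lambda) = G$), a direct calculation gives
\[ \phi'(s) = \frac{\phi(s)}{s} - \frac{\sum_{i} f_{i}(\lambda(A(s)))}{s\,w}, \qquad \mbox{equivalently} \qquad \Bigl(\frac{\phi(s)}{s}\Bigr)' = -\frac{\sum_{i} f_{i}(\lambda(A(s)))}{s^{2}\,w} < 0. \]
Because $u$ solves the equation, $\phi(u(x_{0})) = G[u](x_{0}) - u(x_{0})\,\tilde{\psi}_{t}(x_{0}) = 0$, so $\phi(s)/s$ strictly decreases from $0$ and in particular
\[ G(D^{2}u(x_{0}),\, Du(x_{0}),\, \underline{U}(x_{0})) < \underline{U}(x_{0})\,\tilde{\psi}_{t}(x_{0}). \]

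To close the argument, I would combine this with monotonicity. Since $A(\underline{U}(x_{0}))$ dominates $A[\underline{U}](x_{0})$ in matrix order and both lie in $\Gamma_{k}$, monotonicity of $f$ on $\Gamma_{k}$ together with the subsolution inequality gives
\[ G(D^{2}u,\, Du,\, \underline{U})(x_{0}) \geq G[\underline{U}](x_{0}) \geq \tilde{\psi}_{t}(x_{0})\,\underline{U}(x_{0}), \]
contradicting the strict inequality produced by the ODE. Hence $\omega$ cannot have a positive interior maximum and $u \geq \underline{U}$. Uniqueness then follows at once: any two admissible solutions are each a subsolution for the other. The main technical obstacle is Step 1, the admissibility of $A(s)$ along the path, since without it the ODE identity in Step 2 is not even meaningful and the matrix comparison underlying Step 3 need not hold.
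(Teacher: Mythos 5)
Your argument is correct, and it reaches the conclusion by a genuinely different route than the paper. The paper deforms between the two functions at once, setting $u[s] = s\underline{U} + (1-s)u$ and applying a mean-value argument to $a(s) = G[u[s]](x_0) - \tilde{\psi}_t(x_0)\,u[s](x_0)$: since $a(0)=0$ and $a(1)\ge 0$, some $s_0$ has $a(s_0)=0$, $a'(s_0)\ge 0$, and expanding $a'(s_0)$ (using $D(\underline{U}-u)(x_0)=0$, $D^2(\underline{U}-u)(x_0)\le 0$, and Lemma~\ref{Lemma6-1} for the sign of the zeroth-order coefficient) gives a contradiction. You instead freeze $D^2u(x_0)$ and $Du(x_0)$ and move only the scalar argument, obtaining an exact linear ODE $(\phi(s)/s)' = -\sum f_i/(s^2 w) < 0$ from the $1$-homogeneity identity $F^{ij}a_{ij} = F$ (this is effectively a re-derivation of Lemma~\ref{Lemma6-1} in integrated form rather than a pointwise invocation of it), and then you separately compare $G(D^2u,Du,\underline{U})$ with $G[\underline{U}]$ by the positive-semidefinite perturbation and monotonicity of $F$ on $\Gamma_k$. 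Both proofs hinge on the same structural facts --- admissibility along the interpolation path, positivity of $\sum f_i$ on $\Gamma_k$, and convexity of $\Gamma_k$ --- but yours decouples the zeroth-order deformation from the second-order comparison, which makes the role of degree-one homogeneity especially transparent, at the modest cost of a second monotonicity step at the endpoint. Your verification that $A(s)\in\Gamma_k$ along the affine path (endpoints in the cone, cone convex, and the endpoint at $s=\underline{U}(x_0)$ being a $\Gamma_k$ matrix plus a PSD perturbation) is sound; note the paper's admissibility check for $u[s]$ is organized slightly differently but relies on the same monotonicity and convexity of the cone.
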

\begin{proof}
If not, $\underline{U} - u$ achieves a positive maximum at $x_0 \in \Omega_{\epsilon}$, and
\begin{equation} \label{eq3-15}
\underline{U}(x_0) > u(x_0),\quad D \underline{U}(x_0) = D u(x_0), \quad D^2\underline{U}(x_0) \leq D^2 u(x_0).
\end{equation}
Note that for any $s \in [0, 1]$, the deformation $u[s] = s \underline{U} + (1 - s) u$ is admissible near $x_0$. This is because at $x_0$,
\[ \begin{aligned}
   & \delta_{ij} + u[s]  {\gamma}^{ik}  ( u[s] )_{kl}  \gamma^{lj}
  \geq  \delta_{ij} + u[s] {\gamma}^{ik}  \underline{U}_{kl} \gamma^{lj}  \\
   = & (1 - s) \Big( 1 - \frac{u}{\underline{U}}\Big) \delta_{ij} + \frac{u[s]}{\underline{U}} \Big( \delta_{ij} + \underline{U}  \gamma^{ik} \underline{U}_{kl} \gamma^{lj} \Big).
  \end{aligned}
\]
For $s \in [0, 1]$, define a differentiable function
\[ a(s) = G \Big[ u[s] \Big] (x_0) - \Big(  ( 1 - t ) \frac{G [\underline{u}] (x_0)}{ \underline{u}(x_0) } + t \delta \Big)  u[s](x_0). \]
Since $a(0) = 0$ and $a(1) \geq 0$, there exists $s_0 \in [0, 1]$ such that $a(s_0) = 0$ and $a'(s_0) \geq 0$, that is,
\begin{equation} \label{eq3-16}
 G\big[ u[s_0] \big] (x_0) = \Big(  ( 1 - t ) \frac{G [\underline{u}] (x_0)}{ \underline{u}(x_0) } + t \delta \Big) u[s_0] (x_0),
\end{equation}
and
\begin{equation} \label{eq3-17}
\begin{aligned}
& G^{ij}\big[ u[s_0]  \big](x_0)  D_{ij}  (\underline{U} - u)(x_0)
 + G^i \big[ u[s_0]  \big](x_0)  D_i  (\underline{U} - u)(x_0)
 \\ & +  \Big(G_u \big[ u[s_0] \big](x_0) - \big ( ( 1 - t ) \frac{G [\underline{u}] (x_0)}{ \underline{u}(x_0) } + t \delta \big) \Big)  (\underline{U} - u)(x_0) \geq 0.
\end{aligned}
\end{equation}
However, inequality \eqref{eq3-17} can not hold by \eqref{eq3-15}, \eqref{eq3-16} and Lemma \ref{Lemma6-1}.
\end{proof}

\begin{thm} \label{Theorem6-1}
For $t \in [0, 1]$, \eqref{eq3-12} has a unique admissible solution $u \geq \underline{u}$.
\end{thm}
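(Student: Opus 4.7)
The plan is to prove Theorem 5.5 by the standard method of continuity. Define
\[ T = \{ t \in [0,1] \,\vert\, \text{\eqref{eq3-12}}_t \text{ has an admissible solution } u^t \in C^{4,\alpha}(\overline{\Omega_\epsilon}) \text{ with } u^t \geq \underline{u} \}, \]
and show that $T$ is nonempty, open, and closed in $[0,1]$, hence $T = [0,1]$. Uniqueness is already handed to us by Lemma \ref{Lemma6-2}. For nonemptiness, at $t = 0$ the equation reads $G[u] = (G[\underline{u}]/\underline{u})\, u$, for which $\underline{u}$ itself is a solution and meets the boundary condition $\underline{u} = \epsilon$ on $\Gamma_\epsilon$ by construction of $\Gamma_\epsilon$; so $0 \in T$. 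Moreover, the inequality \eqref{eq3-14} guarantees that for every $t \in [0,1]$, $\underline{u}$ is an admissible subsolution of \eqref{eq3-12}$_t$, so that Lemma \ref{Lemma6-2} will automatically yield $u^t \geq \underline{u}$ whenever a solution exists.

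For openness at $t_0 \in T$, I would apply the implicit function theorem in the Banach space $\{ v \in C^{4,\alpha}(\overline{\Omega_\epsilon}) \,\vert\, v = \epsilon \text{ on } \Gamma_\epsilon\}$ to the map $\mathcal{F}[u,t] = G[u] - c_t(x)\, u$ with $c_t = (1-t) G[\underline{u}]/\underline{u} + t\delta$. The linearization at $u = u^{t_0}$ is
\[ L \phi = G^{ij}[u^{t_0}]\, \phi_{ij} + G^i[u^{t_0}]\, \phi_i + \big( G_u[u^{t_0}] - c_{t_0} \big) \phi, \]
which is elliptic because $u^{t_0}$ is admissible, and has strictly negative zeroth-order coefficient by Lemma \ref{Lemma6-1} applied with $\psi = c_{t_0}$. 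Hence $L$ satisfies the maximum principle and is invertible between the appropriate H\"older spaces with zero Dirichlet data, giving an admissible solution $u^t$ for $t$ near $t_0$. Admissibility is preserved for $t$ sufficiently close to $t_0$ by continuity of $\lambda(a_{ij}[u])$, and Lemma \ref{Lemma6-2} then gives $u^t \geq \underline{u}$.

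For closedness, take $t_k \in T$ with $t_k \to t_\infty$. The a priori estimates in Sections 2--4 apply uniformly to this family once we observe that \eqref{eq3-12}$_t$ has right-hand side of the form $\psi^t(x,u) = c_t(x) u$; this $\psi^t$ satisfies \eqref{eq1-2} with equality ($\psi^t_u = c_t = \psi^t/u$), and $c_t$ is bounded in $C^2(\overline{\Omega_\epsilon})$ uniformly in $t \in [0,1]$ since $G[\underline{u}]/\underline{u}$ is smooth and positive. This furnishes uniform $C^0$, $C^1$ and $C^2$ bounds on $u^{t_k}$ with principal curvatures in a fixed compact subset of $\Gamma_k$; uniform ellipticity plus concavity of $\sigma_k^{1/k}$ on $\Gamma_k$ then yields uniform $C^{2,\alpha}$ bounds via Evans--Krylov, and Schauder theory upgrades these to uniform $C^{4,\alpha}$ bounds. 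Extracting a subsequential limit in $C^{4,\beta}$ for $\beta < \alpha$ produces an admissible $u^{t_\infty} \geq \underline{u}$ solving \eqref{eq3-12}$_{t_\infty}$, proving $t_\infty \in T$.

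The main obstacle I anticipate is the bookkeeping in the closedness step, namely confirming that each estimate in Sections 2--4, originally stated for $G[u] = \psi(x,u)$, transfers to the deformed family with $\psi$ replaced by $c_t(x) u$, uniformly in $t$. The global gradient estimate is the most delicate point because it invoked \eqref{eq1-2}; the computation above shows this hypothesis holds with equality for each $t$, and the constants in the ensuing estimate depend only on $\sup c_t$, $\inf c_t$ and $\|c_t\|_{C^2}$, all controlled independently of $t$. With this verified, the rest of the argument is routine.
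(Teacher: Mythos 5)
Your proposal is correct and follows essentially the same route as the paper: the standard continuity method on $t \in [0,1]$ with nonemptiness verified at $t=0$ via $\underline u$ itself, openness via the implicit function theorem (the linearization being invertible because Lemma \ref{Lemma6-1} gives a strictly negative zeroth-order coefficient), and closedness via the a priori estimates of Sections 2--4, noting that the right-hand side $c_t(x)u$ satisfies \eqref{eq1-2} with equality. The only cosmetic difference is that you run the argument in $C^{4,\alpha}$ spaces whereas the paper uses $C^{2,\alpha}$ (relying directly on the Evans--Krylov bound \eqref{eq3-20}), which is slightly leaner but substantively identical.
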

\begin{proof}
Uniqueness is proved in Lemma \ref{Lemma6-2}. We use standard continuity method to prove the existence. By \eqref{eq3-14},  $\underline{u}$ is a subsolution of \eqref{eq3-12}. The $C^2$ estimate for admissible solution $u \geq \underline{u}$ of \eqref{eq3-12} implies uniform ellipticity of this equation, which further gives $C^{2, \alpha}$ estimate by Evans-Krylov theory
\begin{equation} \label{eq3-20}
\Vert u \Vert_{C^{2, \alpha} ( \overline{ \Omega_{\epsilon} } )}  \leq C,
\end{equation}
where $C$ is independent of $t$.
Denote
\[ C_0^{2, \alpha} ( \overline{ \Omega_{\epsilon} } ) = \{ w \in C^{2, \alpha}(  \overline{ \Omega_{\epsilon} }  ) \,| \,w = 0 \,\, \mbox{on} \,\, \Gamma_{\epsilon} \}, \]

\[ \mathcal{U} = \{ w \in C_0^{2, \alpha} ( \overline{ \Omega_{\epsilon} } ) \,| \, \underline{u} + w \,\,\mbox{is}\,\,\mbox{admissible} \,\, \mbox{in} \,\, \overline{\Omega_{\epsilon}}  \}. \]
Obviously, $C_0^{2, \alpha} ( \overline{ \Omega_{\epsilon} } )$ is a subspace of $C^{2, \alpha}( \overline{ \Omega_{\epsilon} } )$ and
$\mathcal{U}$ is an open subset of $C_0^{2, \alpha} (\overline{ \Omega_{\epsilon} })$.
Define $\mathcal{L}: \mathcal{U} \times [ 0, 1 ] \rightarrow C^{\alpha}( \overline{ \Omega_{\epsilon} } )$,
\[ \mathcal{L} ( w, t ) = G [ \underline{u} + w ]  -  \Big(  ( 1 - t ) \frac{G[\underline{u}]}{ \underline{u} } + t \delta  \Big)  (\underline{u} + w),  \]
and set
\[ \mathcal{S} = \{ t \in [0, 1] \,|\, \mathcal{L}(w, t) = 0 \,\,\mbox{has}\,\,\mbox{a}\,\,\mbox{solution}\,\,w \,\,\mbox{in}\,\,\mathcal{U} \}. \]
Since $\mathcal{L}(0, 0) = 0$, $\mathcal{S} \neq \emptyset$.

$\mathcal{S}$ is open in $[0, 1]$. In fact, for any $t_0 \in \mathcal{S}$, there exists $w_0 \in \mathcal{U}$ such that $\mathcal{L} (w_0, t_0) = 0$. Note that the Fr\'echet derivative of $\mathcal{L}$ with respect to $w$ at $(w_0, t_0)$ is a linear elliptic operator from $C^{2, \alpha}_0 ( \overline{\Omega_{\epsilon}} )$ to $C^{\alpha}( \overline{\Omega_{\epsilon}})$,
\[ \begin{aligned}
\mathcal{L}_w \big|_{(w_0, t_0)} ( h )  =    G^{ij}[\underline{u} + w_0] D_{ij} h  +  G^i [ \underline{u} + w_0 ] D_i h  \\
+ \Big(G_u [ \underline{u} + w_0] - ( 1 - t_0 ) \frac{G[\underline{u}]}{ \underline{u} } - t_0 \delta   \Big) h.
\end{aligned} \]
Lemma \ref{Lemma6-1} implies $\mathcal{L}_w \big|_{(w_0, t_0)}$ is invertible. Thus a neighborhood of $t_0$ is also contained in $\mathcal{S}$ by implicit function theorem.

$\mathcal{S}$ is closed in $[0, 1]$. In fact, let $t_i$ be a sequence in $\mathcal{S}$ converging to $t_0 \in [0, 1]$ and $w_i \in \mathcal{U}$ be the unique (by Lemma \ref{Lemma6-2}) solution to $\mathcal{L} (w_i, t_i) = 0$. Lemma \ref{Lemma6-2} implies $w_i \geq 0$, and \eqref{eq3-20} implies that $u_i = \underline{u} + w_i$ is a bounded sequence in $C^{2, \alpha}(\overline{\Omega_{\epsilon}})$, which possesses a subsequence converging to an admissible solution $u_0$ of \eqref{eq3-12}. Since $w_0 = u_0 - \underline{u} \in \mathcal{U}$ and $\mathcal{L}(w_0, t_0) = 0$, we know that $t_0 \in \mathcal{S}$.
\end{proof}

Now we may assume $\underline{u}$ is not a solution of \eqref{eqn10}, for otherwise we are done.

\begin{lemma} \label{Lemma6-3}
If $u \geq \underline{u}$ is an admissible solution of \eqref{eq3-13}, then
$u > \underline{u}$ in $\Omega_{\epsilon}$ and $(u - \underline{u})_{\gamma} > 0$ on $\Gamma_{\epsilon}$.
\end{lemma}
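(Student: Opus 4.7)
The plan is to set $v := u - \underline{u}$, so that $v \geq 0$ in $\overline{\Omega_\epsilon}$ with $v = 0$ on $\Gamma_\epsilon$, and derive a uniformly elliptic linear inequality $Lv \leq 0$ from which the strong maximum principle yields $v > 0$ in $\Omega_\epsilon$ and Hopf's boundary point lemma gives $v_\gamma > 0$ on $\Gamma_\epsilon$.

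First I would verify that $\underline{u}$ is a subsolution of \eqref{eq3-13}, with strict inequality at least somewhere. From \eqref{eqn8} one has $G[\underline{u}] \geq \psi(x, \underline{u})$, and from \eqref{eq3-14} one has $G[\underline{u}] > \delta\,\underline{u}$. Forming the convex combination with weights $1-t$ and $t$,
\[
G[\underline{u}] - \bigl[(1-t)\delta\,\underline{u} + t\psi(x, \underline{u})\bigr] = (1-t)\bigl(G[\underline{u}] - \delta\,\underline{u}\bigr) + t\bigl(G[\underline{u}] - \psi(x, \underline{u})\bigr) \geq 0,
\]
and this is strictly positive throughout $\Omega_\epsilon$ whenever $t < 1$. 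For $t = 1$ it is strict somewhere because, by the assumption stated just before the lemma, $\underline{u}$ does not solve \eqref{eqn10}.

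Next I would linearize. Subtracting the subsolution inequality from the equation satisfied by $u$ gives
\[
G[u] - G[\underline{u}] \leq (1-t)\delta\,v + t\bigl(\psi(x, u) - \psi(x, \underline{u})\bigr) = \bigl[(1-t)\delta + t\psi_u(x, \tilde u)\bigr] v
\]
by the mean value theorem, where $\tilde u$ lies between $\underline{u}$ and $u$. Along the path $u(s) = \underline{u} + sv$, $s \in [0, 1]$, standard linearization yields
\[
G[u] - G[\underline{u}] = a^{ij} v_{ij} + b^i v_i + c\,v,
\]
with $a^{ij} = \int_0^1 G^{ij}[u(s)]\,ds$ and analogous formulas for $b^i$ and $c$. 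Combining,
\[
a^{ij} v_{ij} + b^i v_i + \bigl[c - (1-t)\delta - t\psi_u(x, \tilde u)\bigr] v \leq 0.
\]
By Lemma \ref{Lemma1}, $(a^{ij})$ is positive definite wherever the interpolant is admissible.

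The main obstacle will be confirming that each interpolant $u(s)$ remains admissible so that $G^{ij}[u(s)]$ is defined; since $A[u(s)]$ depends nonlinearly on $s$, mere convexity of $\Gamma_k$ is insufficient. I plan to handle this in the spirit of the deformation argument used in Lemma \ref{Lemma6-2}, observing at any interior zero of $v$ that $Du = D\underline{u}$ and $D^2 u \geq D^2 \underline{u}$, which forces $A[u(s)] \geq A[\underline{u}]$ at that point and hence admissibility locally by continuity; at the same time this pointwise comparison combined with monotonicity of $F$ gives $G[u] \geq G[\underline{u}]$, directly contradicting the strict subsolution inequality and already ruling out interior zeros of $v$ when $t < 1$. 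After shifting the zero-order coefficient by a large constant $K$ to make it non-positive (permissible since $v \geq 0$), the strong maximum principle applied to the modified operator forces $v > 0$ throughout $\Omega_\epsilon$, and Hopf's lemma on the $C^4$ boundary $\Gamma_\epsilon$ delivers $v_\gamma > 0$, completing the proof.
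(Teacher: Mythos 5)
Your approach is sound in outline but has a genuine gap exactly where you flag the admissibility obstacle. Your patch only establishes admissibility of the interpolant $u(s) = \underline{u} + sv$ in a neighborhood of an \emph{interior} zero of $v$, using $Dv = 0$ and $D^2 v \geq 0$ there. That suffices to run the strong maximum principle for the interior conclusion (for $t = 1$; for $t < 1$ you already obtain a pointwise contradiction). But Hopf's boundary lemma requires the linear inequality $Lv \leq 0$ to hold in an interior ball touching $\Gamma_{\epsilon}$, not near interior zeros of $v$, and you never establish admissibility of $u(s)$ near a boundary point $y_0$ where $v_\gamma(y_0) = 0$ --- which is precisely the case Hopf's lemma is supposed to rule out. (The hole is fillable: if $v_\gamma(y_0) = 0$ then $Dv(y_0) = 0$, and differentiating $v|_{\Gamma_{\epsilon}} \equiv 0$ twice together with $v \geq 0$ forces $D^2 v(y_0) \geq 0$, after which the same continuity argument applies; but you would need to say this explicitly.)

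The paper avoids the issue entirely by interpolating in $A$-space rather than in $u$-space: it writes
\[
F(A[\underline{u}]) - F(A[u]) = \bigl(a_{ij}[\underline{u}] - a_{ij}[u]\bigr) \int_0^1 F^{ij}\bigl((1-s)A[u] + sA[\underline{u}]\bigr)\,ds .
\]
Because the set of symmetric matrices with eigenvalues in $\Gamma_k$ is convex (G\aa rding), the matrix path stays admissible at every point of $\Omega_{\epsilon}$, so the integrated coefficient tensor is globally defined and positive definite. One then telescopes $a_{ij}[\underline{u}] - a_{ij}[u]$ into three differences (in $D^2 u$, $Du$, $u$ separately), each a mean-value expression in $v_{kl}$, $v_s$, $v$ respectively, producing a uniformly elliptic linear inequality for $v$ valid throughout $\Omega_{\epsilon}$, to which the strong maximum principle and Hopf's boundary lemma (Lemma H of Gidas--Ni--Nirenberg) apply directly. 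This buys global well-definedness of the linearized operator for free, which is exactly what your $u$-space interpolation lacks near the boundary; I would recommend switching to it.
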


\begin{proof}
Indeed, we can write \eqref{eq3-13} in a more general form.
\begin{equation}  \label{eq3-18}
\left\{ \begin{aligned}
F( A[u] ) = &  \psi(x, u)  \quad  &\mbox{in} \,\, & \Omega_{\epsilon}, \\
u = &  \varphi  \quad  & \mbox{on} \,\, & \Gamma_{\epsilon}. \end{aligned} \right.
\end{equation}
Since $\underline{u}$ is a subsolution but not a solution of \eqref{eq3-18}, we have
\[ F( A[\underline{u}]) - F(A [u]) \geq \psi (x, \underline{u}) - \psi (x, u). \]
Also,
\[\begin{aligned}
F( A [\underline{u}]) - F( A [u]) =  \int_0^1 \frac{d}{d s} F ( (1 - s) A [u] + s A[ \underline{u} ] ) d s \\
=  (a_{ij}[\underline{u}] - a_{ij}[u])  \int_0^1 F^{ij} ( (1 - s) A [u] + s A[ \underline{u} ] ) d s
\end{aligned} \]
and
\[ \begin{aligned}
& a_{ij}[\underline{u}] - a_{ij}[u] = a_{ij}(D^2 \underline{u}, D \underline{u}, \underline{u}) - a_{ij}(D^2 u, D u, u) \\
= & a_{ij}(D^2 \underline{u}, D \underline{u}, \underline{u}) - a_{ij}(D^2 \underline{u}, D \underline{u}, u) + a_{ij}(D^2 \underline{u}, D \underline{u}, u) - a_{ij}(D^2 \underline{u}, D u, u) \\
& + a_{ij}(D^2 \underline{u}, D u, u) - a_{ij}(D^2 u, D u, u).
\end{aligned} \]
Applying the Maximum Principle and Lemma H (see p. 212 of \cite{GNN}) we proved the lemma.
\end{proof}

\begin{thm} \label{Theorem6-2}
For any $t \in [0, 1]$, there is an admissible solution $u \geq \underline{u}$ to Dirichlet problem \eqref{eq3-13}.
\end{thm}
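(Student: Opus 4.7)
The plan is to run a continuity method in $t \in [0,1]$ directly on \eqref{eq3-13}, paralleling the proof of Theorem \ref{Theorem6-1}. At $t = 0$ equation \eqref{eq3-13} reads $G(D^2 u, D u, u) = \delta u$, which is exactly \eqref{eq3-12} at $t = 1$; Theorem \ref{Theorem6-1} therefore furnishes the starting admissible solution $u_0 \geq \underline{u}$. I would define $\widetilde{\mathcal{L}} : \mathcal{U} \times [0,1] \to C^{\alpha}(\overline{\Omega_\epsilon})$ by
\[ \widetilde{\mathcal{L}}(w, t) = G[\underline{u} + w] - (1-t)\delta(\underline{u} + w) - t\,\psi(x, \underline{u} + w), \]
with $\mathcal{U}$ as in Theorem \ref{Theorem6-1}, and set $\widetilde{\mathcal{S}} = \{ t \in [0,1] : \widetilde{\mathcal{L}}(\cdot, t) = 0 \text{ has a solution in } \mathcal{U}\}$, which is nonempty by the above observation.

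For openness, the Fr\'echet derivative $\widetilde{\mathcal{L}}_w$ at a solution pair $(w_0, t_0)$ is a linear elliptic operator with zero-order coefficient $G_u[u_0] - (1-t_0)\delta - t_0\,\psi_u(x, u_0)$, where $u_0 = \underline{u} + w_0$. Applying the homogeneity identity from the proof of Lemma \ref{Lemma6-1} to the equation $G[u_0] = (1-t_0)\delta u_0 + t_0\,\psi(x, u_0)$ gives
\[ G_u[u_0] \;=\; (1-t_0)\delta + t_0\,\frac{\psi}{u_0} - \frac{1}{u_0 w}\sum f_i, \]
so the zero-order coefficient simplifies to $t_0(\psi/u_0 - \psi_u) - \frac{1}{u_0 w}\sum f_i$, which is strictly negative by \eqref{eq1-2}. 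Combined with ellipticity, this yields invertibility of $\widetilde{\mathcal{L}}_w|_{(w_0,t_0)} : C_0^{2,\alpha}(\overline{\Omega_\epsilon}) \to C^{\alpha}(\overline{\Omega_\epsilon})$, and the implicit function theorem produces openness.

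For closedness, I would first observe that the right-hand side $\Psi_t(x, u) := (1-t)\delta u + t\,\psi(x, u)$ remains smooth and positive, and inherits $(\Psi_t)_u - \Psi_t/u = t(\psi_u - \psi/u) \geq 0$ from \eqref{eq1-2}; moreover $\underline{u}$ is an admissible subsolution of \eqref{eq3-13} for every $t$, since \eqref{eqn8} and \eqref{eq3-14} give $G[\underline{u}] \geq \max\{\psi(x,\underline{u}),\,\delta\underline{u}\} \geq \Psi_t(x,\underline{u})$. Consequently every $C^1$ and $C^2$ estimate from Sections 2--4 applies uniformly in $t$, and Evans--Krylov then delivers a $t$-independent bound $\|u\|_{C^{2,\alpha}(\overline{\Omega_\epsilon})} \leq C$. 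The comparison $\underline{u} + w \geq \underline{u}$ along the path is preserved by repeating the deformation argument of Lemma \ref{Lemma6-2} verbatim with $\Psi_t$ in place of the original right-hand side; the essential ingredient is again the strict inequality $G_u[u] - (\Psi_t)_u(x,u) < 0$ derived above. Extracting a convergent subsequence gives closedness, and connectedness of $[0,1]$ forces $\widetilde{\mathcal{S}} = [0,1]$; evaluating at $t = 1$ yields the desired admissible solution of \eqref{eqn10}. The main obstacle is confirming that the global gradient and curvature estimates of Sections 2--3 go through uniformly in $t$ with $\Psi_t$ replacing $\psi$, but since \eqref{eq1-2} is precisely preserved by $\Psi_t$ the arguments adapt without modification.
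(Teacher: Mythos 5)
Your proof is correct, but it takes a genuinely different route from the paper. The paper proves this theorem by Leray--Schauder degree theory: it bootstraps to a $C^{4,\alpha}$ bound, defines a bounded open set $\mathcal{O}\subset C^{4,\alpha}_0(\overline{\Omega_{\epsilon}})$ carved out by the strict inequalities $w>0$ in $\Omega_{\epsilon}$, $w_\gamma>0$ on $\Gamma_{\epsilon}$ (which is why Lemma~\ref{Lemma6-3} is needed), and the uniform admissibility bound \eqref{eq3-22}; it then argues that $\mathcal{M}_t$ has no zeros on $\partial\mathcal{O}$, that the degree is $t$-independent, and that the degree at $t=0$ is nonzero because $\mathcal{M}_{0,w}|_{w^0}$ is invertible via Lemma~\ref{Lemma6-1}. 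Crucially, the paper only needs nondegeneracy of the linearization at the single endpoint $t=0$, where the right-hand side $\delta u$ is linear in $u$ and the zero-order coefficient reduces to $G_u-\delta=-\frac{1}{uw}\sum f_i<0$ with no appeal to \eqref{eq1-2}.

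You instead run a direct continuity method in $C^{2,\alpha}$. The key extra step you supply is the invertibility of $\widetilde{\mathcal{L}}_w$ at \emph{every} interior $t$, which requires the computation $G_u[u_0]-(\Psi_{t_0})_u=t_0\bigl(\psi/u_0-\psi_u\bigr)-\frac{1}{u_0 w}\sum f_i<0$; the $(1-t_0)\delta$ terms cancel by the linearity of $\delta u$, and \eqref{eq1-2} handles the remainder. This is exactly right, and it also makes the deformation argument of Lemma~\ref{Lemma6-2} transferable to $\Psi_t$, so you get the needed $u\geq\underline{u}$ along the whole path. What you buy is a lighter proof: no $C^{4,\alpha}$ Schauder bootstrap, no Lemma~\ref{Lemma6-3}, no degree machinery. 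What you pay is the pointwise structure condition \eqref{eq1-2} at interior $t$, which the paper's degree argument never invokes for this step (it is already imposed in Theorem~\ref{Theorem1}, so in this setting it is free). Both arguments are valid; yours is arguably the more economical one given the standing hypotheses of Theorem~\ref{Theorem1}, while the paper's degree formulation would survive even if one dropped \eqref{eq1-2} and lost the intermediate nondegeneracy.
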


\begin{proof}
By classical Schauder theory, the $C^{2, \alpha}$ estimate for admissible solution $u \geq \underline{u}$ of \eqref{eq3-13} further implies $C^{4, \alpha}$ estimate
\begin{equation} \label{eq3-21}
\Vert u \Vert_{C^{4,\alpha}(\overline{\Omega_{\epsilon}})} < C_4.
\end{equation}
In addition,
\begin{equation} \label{eq3-22}
\mbox{dist}(\kappa[u], \partial\Gamma_k) > c_2 > 0 \quad \mbox{in} \,\, \overline{\Omega_{\epsilon}},
\end{equation}
where $C_4$ and $c_2$ are independent of $t$. Denote
\[ C_0^{4, \alpha} (\overline{\Omega_{\epsilon}}) = \{ w \in C^{4, \alpha}( \overline{\Omega_{\epsilon}}) \,|\, w = 0 \,\, \mbox{on} \,\, \Gamma_{\epsilon} \}\]
and
\[ \mathcal{O} = \Big\{ w \in C_0^{4, \alpha} (\overline{\Omega_{\epsilon}}) \left\vert \begin{footnotesize}\begin{aligned} & w > 0 \,\,\mbox{in}\,\,\Omega_{\epsilon}, \,\,  w_{\gamma} > 0 \,\,\mbox{on}\,\, \Gamma_{\epsilon},  \,\, \Vert w {\Vert}_{C^{4,\alpha}(\overline{\Omega_{\epsilon}})} < C_4 + \Vert\underline{u}\Vert_{C^{4,\alpha}(\overline{\Omega_{\epsilon}})}\\ & \underline{u} + w  \,\, \mbox{is} \,\, \mbox{admissible} \,\, \mbox{in} \,\, \overline{\Omega_{\epsilon}}, \,\, \mbox{dist}(\kappa[\underline{u} + w], \partial\Gamma_k) > c_2  \,\, \mbox{in} \,\, \overline{\Omega_{\epsilon}}  \end{aligned}\end{footnotesize} \right.\Big\}. \]
We know that $\mathcal{O}$ is a bounded open subset of $C_0^{4, \alpha} (\overline{\Omega_{\epsilon}})$.

Define a map
$\mathcal{M}_t (w):  \mathcal{O} \times [ 0, 1 ] \rightarrow C^{2,\alpha}(\overline{\Omega_{\epsilon}})$,
\[ \mathcal{M}_t (w) = G [ \underline{u} + w ]  - ( 1 - t ) \delta (\underline{u} + w)  -  t  \psi(x, \underline{u} + w). \]
By Theorem \ref{Theorem6-1} and Lemma \ref{Lemma6-2}, there is a unique admissible solution $u^0$ of \eqref{eq3-12} at $t = 1$, which is also the unique admissible solution of \eqref{eq3-13} for $t = 0$. By Lemma \ref{Lemma6-2}, $w^0 = u^0 - \underline{u} \geq 0$ in $\Omega_{\epsilon}$. Consequently, $w^0 > 0$ in $\Omega_{\epsilon}$ and ${w^0}_{\gamma} > 0$ on $\Gamma_{\epsilon}$ by Lemma \ref{Lemma6-3}. Meanwhile, $\underline{u} + w^0$ satisfies \eqref{eq3-21} and \eqref{eq3-22}. Thus, $w^0 \in \mathcal{O}$. In view of Lemma \ref{Lemma6-3}, \eqref{eq3-21} and \eqref{eq3-22}, $\mathcal{M}_t(w) = 0$ has no solution on $\partial\mathcal{O}$ for any $t \in [0, 1]$. Note that $\mathcal{M}_t$ is uniformly elliptic on $\mathcal{O}$ independent of $t$. Hence we can define the degree of $\mathcal{M}_t$ on $\mathcal{O}$ at $0$, which is independent of $t$. It suffices to show this degree is nonzero at $t = 0$.
We have known that $\mathcal{M}_0 ( w ) = 0$ has a unique solution $w^0 \in \mathcal{O}$. The Fr\'echet derivative of $\mathcal{M}_0$ with respect to $w$ at $w^0$ is a linear elliptic operator from $C^{4, \alpha}_0 (\overline{\Omega_{\epsilon}})$ to $C^{2, \alpha}(\overline{\Omega_{\epsilon}})$,
\begin{equation*}
\mathcal{M}_{0, w} |_{w^0} ( h )  =  G^{ij}[ u^0 ] D_{ij} h  + G^i [ u^0 ] D_i h   + ( G_u [ u^0 ]  - \delta ) h.
\end{equation*}
By Lemma \ref{Lemma6-1}, $G_u [ u^0 ]  - \delta  < 0$ in $\overline{\Omega_{\epsilon}}$. Hence $\mathcal{M}_{0, w} |_{w^0}$ is invertible. By degree theory in \cite{Li89} we can conclude that the degree at $t = 0$ is nonzero, which implies that \eqref{eq3-13} has at least one admissible solution $u \geq \underline{u}$ for any $t \in [0, 1]$.
\end{proof}

\vspace{2mm}

\subsection{Comparison principle, uniqueness and monotonicity}~

\vspace{2mm}

We have the following comparison principle.
\begin{thm} \label{TheoremComparison}
Under assumption \eqref{eq1-2}, let $\underline{U}$ and $u$ be any admissible subsolution and solution of
\eqref{eqn17} in $\Omega_{\epsilon}$
and $u \geq \underline{U}$ on $\Gamma_{\epsilon}$.
Then $u \geq \underline{U}$ in $\Omega_{\epsilon}$.
\end{thm}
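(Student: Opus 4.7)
The plan is to follow the contradiction scheme of Lemma \ref{Lemma6-2}, but with the $u$-dependent right-hand side $\psi(x,u)$, closing the argument by means of hypothesis \eqref{eq1-2} in place of Lemma \ref{Lemma6-1}.

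Assume for contradiction that $\underline{U}-u$ attains a positive maximum at an interior point $x_0\in\Omega_{\epsilon}$, so that
\[
\underline{U}(x_0)>u(x_0),\qquad D\underline{U}(x_0)=Du(x_0),\qquad D^2\underline{U}(x_0)\le D^2u(x_0).
\]
Introduce the convex combination $u[s]=s\underline{U}+(1-s)u$, $s\in[0,1]$. Since $D^2u[s]\ge D^2\underline{U}$ at $x_0$, the same matrix manipulation as in Lemma \ref{Lemma6-2} gives
\[
\delta_{ij}+u[s]\,\gamma^{ik}u[s]_{kl}\gamma^{lj}\ \ge\ (1-s)\Bigl(1-\tfrac{u}{\underline{U}}\Bigr)\delta_{ij}+\tfrac{u[s]}{\underline{U}}\bigl(\delta_{ij}+\underline{U}\,\gamma^{ik}\underline{U}_{kl}\gamma^{lj}\bigr)
\]
at $x_0$, so $A[u[s]](x_0)\in\Gamma_k$; admissibility in a whole neighborhood of $x_0$ then follows by continuity.

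Next, define $a(s)=G[u[s]](x_0)-\psi(x_0,u[s](x_0))$ on $[0,1]$. Since $u$ solves and $\underline{U}$ subsolves \eqref{eqn17}, one has $a(0)=0$ and $a(1)\ge 0$. Picking $s_0=0$ when $a'(0)\ge 0$, and otherwise taking the smallest $s_0\in(0,1]$ at which $a$ returns to zero, I obtain $s_0\in[0,1]$ with $a(s_0)=0$ and $a'(s_0)\ge 0$. Differentiating at $x_0$ and using $Du[s](x_0)=Du(x_0)$,
\[
0\le a'(s_0)=G^{ij}[u[s_0]]\,D_{ij}(\underline{U}-u)(x_0)+\bigl(G_u[u[s_0]]-\psi_u(x_0,u[s_0](x_0))\bigr)(\underline{U}-u)(x_0),
\]
and the first summand is $\le 0$ by $D^2\underline{U}(x_0)\le D^2u(x_0)$ together with the ellipticity of $\{G^{ij}\}$ on the admissible cone.

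It remains to verify that $G_u[u[s_0]]-\psi_u<0$ at $x_0$, which contradicts the previous inequality since $(\underline{U}-u)(x_0)>0$. Using Lemma \ref{Lemma1}, the homogeneity of $f$ and $a(s_0)=0$, I compute
\[
G_u[u[s_0]]-\psi_u=\Bigl(\frac{\psi(x_0,u[s_0])}{u[s_0]}-\psi_u(x_0,u[s_0])\Bigr)-\frac{1}{u[s_0]\,w}\sum f_i.
\]
The parenthesized term is $\le 0$ by \eqref{eq1-2}, while $\sum f_i>0$ on $\Gamma_k$ forces the last term to be strictly negative. I expect the main subtlety to be not this final algebra but the two auxiliary steps: confirming admissibility of $u[s]$ on an entire neighborhood of $x_0$ (so that $G[u[s]]$ is well defined along the deformation) and the elementary intermediate-value argument selecting $s_0$. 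Both are routine adaptations of Lemma \ref{Lemma6-2} but must be executed carefully because the genuinely nonlinear $u$-dependence of $\psi$ is what makes assumption \eqref{eq1-2} enter precisely at the right place.
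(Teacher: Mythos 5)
Your proposal is correct and follows essentially the same route as the paper: the deformation $u[s]=s\underline{U}+(1-s)u$, the intermediate-value choice of $s_0$ with $a(s_0)=0$, $a'(s_0)\ge 0$, and the final sign computation combining Lemma~\ref{Lemma1}, homogeneity of $f$, and hypothesis~\eqref{eq1-2} to show $G_u[u[s_0]](x_0)-\psi_u(x_0,u[s_0])<0$. The only cosmetic difference is that you drop the $G^i\,D_i(\underline{U}-u)$ term from the outset (justified since the gradients agree at $x_0$), whereas the paper writes it and lets it vanish; otherwise the arguments coincide.
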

\begin{proof}
If not, $\underline{U} - u$ achieves a positive maximum at $x_0 \in \Omega_{\epsilon}$, at which,
\begin{equation} \label{eq5-5}
\underline{U}(x_0) > u(x_0),\quad D \underline{U}(x_0) = D u(x_0), \quad D^2\underline{U}(x_0) \leq D^2 u(x_0).
\end{equation}
Note that for any $s \in [0, 1]$, the deformation $u[s] = s \underline{U} + (1 - s) u$ is admissible near $x_0$.
For $s \in [0, 1]$, define a differentiable function
\[ a(s) = G \big[ u[s] \big] (x_0) - \psi\big(x_0, u[s] \big). \]
Since $a(0) = 0$ and $a(1) \geq 0$, there exists $s_0 \in [0, 1]$ such that $a(s_0) = 0$ and $a'(s_0) \geq 0$, that is,
\begin{equation} \label{eq5-6}
 G\big[ u[s_0] \big] (x_0) = \psi\big(x_0, u[s_0] \big),
\end{equation}
and
\begin{equation} \label{eq5-7}
\begin{aligned}
& G^{ij}\big[ u[s_0]  \big](x_0)  D_{ij}  (\underline{U} - u)(x_0)
 + G^i \big[ u[s_0]  \big](x_0)  D_i  (\underline{U} - u)(x_0)
 \\ & +  \Big(G_u \big[ u[s_0] \big](x_0) - \psi_u (x_0, u[s_0]) \Big)  (\underline{U} - u)(x_0) \geq 0.
\end{aligned}
\end{equation}
However, inequality \eqref{eq5-7} can not hold by \eqref{eq5-5}, \eqref{eq5-6} and the fact that
\[
G_u \big[ u[s_0] \big](x_0) - \psi_u (x_0, u[s_0]) = \frac{1}{u[s_0]} \Big( \psi\big(x_0, u[s_0] \big) - \frac{1}{w} \sum f_i \Big) - \psi_u \big(x_0, u[s_0] \big) < 0.
\]
\end{proof}
By Theorem \ref{TheoremComparison}, we obtain the uniqueness part of Theorem \ref{Theorem1}.  Besides, we can deduce the following monotonicity property of $u^{\epsilon}$ with respect to $\epsilon$.
\begin{cor}
Under the assumptions of Theorem \ref{Theorem1}, for $0 < \epsilon_1 < \epsilon_2$, we have $u^{\epsilon_1} \geq u^{\epsilon_2}$ in $\Omega_{\epsilon_2}$.
\end{cor}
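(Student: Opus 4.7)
The plan is to derive this monotonicity property as a direct application of the comparison principle in Theorem \ref{TheoremComparison}. The key observation is that the level-set domains are nested: since $\underline{u} \geq \epsilon_2 > \epsilon_1$ on $\overline{\Omega_{\epsilon_2}}$, we have the inclusion $\overline{\Omega_{\epsilon_2}} \subset \Omega_{\epsilon_1}$ (with $\Gamma_{\epsilon_2} \subset \Omega_{\epsilon_1}$). Consequently, $u^{\epsilon_1}$ is defined and is an admissible smooth solution of \eqref{eqn17} on the smaller domain $\Omega_{\epsilon_2}$.

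Next I would compare the two functions along $\Gamma_{\epsilon_2}$. By Theorem \ref{Theorem1}, the solution $u^{\epsilon_1}$ satisfies $u^{\epsilon_1} \geq \underline{u}$ throughout $\overline{\Omega_{\epsilon_1}}$. Since $\underline{u} \equiv \epsilon_2$ on $\Gamma_{\epsilon_2}$ by definition, this yields
\[
u^{\epsilon_1} \geq \underline{u} = \epsilon_2 = u^{\epsilon_2} \quad \text{on } \Gamma_{\epsilon_2}.
\]

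Finally, I would invoke Theorem \ref{TheoremComparison} on the domain $\Omega_{\epsilon_2}$, taking $\underline{U} = u^{\epsilon_2}$ (an admissible solution, hence trivially an admissible subsolution) and $u = u^{\epsilon_1}\big|_{\Omega_{\epsilon_2}}$ (an admissible solution of \eqref{eqn17}). The boundary inequality established in the previous step is exactly the hypothesis required by Theorem \ref{TheoremComparison}, so the comparison principle yields $u^{\epsilon_1} \geq u^{\epsilon_2}$ throughout $\Omega_{\epsilon_2}$, which is the conclusion of the corollary.

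There is no real obstacle here; the only subtlety worth flagging is that Theorem \ref{TheoremComparison} relies on condition \eqref{eq1-2}, which is indeed included in the assumptions of Theorem \ref{Theorem1} (outside the $k = n$ exceptional case). If one wishes to record a version of the monotonicity statement for the $k = n$ case, a separate argument would be needed, since uniqueness (and hence the straightforward comparison) fails there; but under the stated hypotheses of Theorem \ref{Theorem1} in the $k < n$ range the argument above is complete.
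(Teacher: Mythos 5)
Your argument is correct and is exactly the route the paper has in mind: the corollary is stated immediately after Theorem \ref{TheoremComparison} as a deduction from it, and your nesting of domains, the observation that $u^{\epsilon_1} \geq \underline{u} = \epsilon_2 = u^{\epsilon_2}$ on $\Gamma_{\epsilon_2}$, and the application of the comparison principle with $\underline{U} = u^{\epsilon_2}$ reproduce that deduction. Your remark about the $k = n$ case, where \eqref{eq1-2} and uniqueness are dropped, is a sensible caveat that the paper leaves implicit.
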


\vspace{4mm}

\section{Interior estimates}

\vspace{2mm}
\,

\subsection{\large Interior gradient estimate}~

\vspace{2mm}

Let $u^{\epsilon} \geq \underline{u}$ be an admissible solution over $\Omega_{\epsilon}$ to the Dirichlet problem \eqref{eqn10}. For any fixed $\epsilon_0 > 0$, we want to establish the uniform $C^1$ estimate for $u^{\epsilon}$ for any $0 < \epsilon < \frac{\epsilon_0}{2}$ on $\overline{\Omega_{\epsilon_0}}$,  namely,
\begin{equation} \label{eq4-1}
\Vert u^{\epsilon} \Vert_{C^1(\overline{\Omega_{\epsilon_0}})} \leq C, \quad \forall \,\, 0 < \epsilon < \frac{\epsilon_0}{2}.
\end{equation}
Hereinafter, $C$ represents a positive constant which is independent of $\epsilon$, but may depend on $\epsilon_0$.

By Lemma \ref{lemma5-1}, we obtain uniform $C^0$ estimate:
\[ u^{\epsilon} \leq C \quad \mbox{on} \,\, \overline{\Omega_{\epsilon}}, \quad \forall\, \epsilon > 0. \]
In particular, we have
\begin{equation} \label{eq4-2}
\frac{\epsilon_0}{2} \leq  u^{\epsilon}  \leq  C \quad \mbox{on} \quad \overline{\Omega_{\epsilon_0/2}}, \quad\forall \,\, 0 < \epsilon < \frac{\epsilon_0}{2}.
\end{equation}
Choose $r = \mbox{dist}(\overline{\Omega_{\epsilon_0}}, \Gamma_{\epsilon_0 / 2})$, and cover $\overline{\Omega_{\epsilon_0}}$ by finitely many open balls $B_{\frac{r}{2}}$ with radius $\frac{r}{2}$ and centered in $\Omega_{\epsilon_0}$. Note that the number of such open balls depends on $\epsilon_0$. In addition, the corresponding balls $B_r$ are all contained in $\Omega_{\epsilon_0 / 2}$, over which, we are able to apply \eqref{eq4-2}.
Now we want to establish interior gradient estimate on each $B_r$ by applying Wang's idea \cite{WangXJ98}.
Since the gradient $D u^{\epsilon}$ are invariant under change of Euclidean coordinate system, we may assume the center of $B_r$ is $0$. For convenience, we also omit the superscript in $u^{\epsilon}$ and write as $u$.

For $x \in B_r(0)$ and $\xi \in \mathbb{S}^{n-1}$, consider the test function
\begin{equation*}
\Theta (x, u, \xi) =  \,  \ln \rho(x) + \varphi(u) +  \ln \ln  u_{\xi},
\end{equation*}
where $\rho(x) = (r^2 - |x|^2)^2$ with $|x|^2 = \sum_{i = 1}^n x_i^2$ and $\varphi(u) = \ln u$.

By the definition of the test function, we know that the maximum value of $\Theta$ must be attained in an interior point $x^0 = (x_1, \ldots, x_n) \in B_r(0)$. We choose the Euclidean coordinate frame ${\partial}_1, \ldots, {\partial}_n$ around $x^0$ such that the direction obtaining the maximum is $\xi = {\partial}_1$. Then at $x^0$,
\[ u_1 = |D u| \quad \mbox{and} \quad u_i = 0 \quad \mbox{for} \quad i = 2, \ldots, n. \]
Therefore, \eqref{eq3.4} holds.
Rotate ${\partial}_2, \ldots, {\partial}_n$ such that at $x^0$, $\big\{ u_{\alpha \beta} \big\}_{\alpha, \beta \geq 2}$ is diagonal and $u_{22} \geq \ldots \geq u_{nn}$. Consequently, we have
\begin{equation} \label{eq3.5}
 a_{ij} = \,\frac{1}{w} \big( \delta_{ij} + u \gamma^{ik} u_{kl} \gamma^{lj} \big) = \left\{ \begin{aligned}
 & \frac{1}{w} \Big( 1 + \frac{u u_{11}}{w^2} \Big), \quad \mbox{if} \quad i = j = 1, \\
 & \frac{u u_{ij}}{w^2}, \quad \mbox{if} \quad i = 1 \,\,\mbox{or} \,\, j = 1, \,\, \mbox{and} \,\, i + j > 2, \\
 &  \frac{1}{w} \big( 1 + u u_{ii} \big) \delta_{ij}, \quad \mbox{otherwise}.
  \end{aligned} \right.
\end{equation}

Since the function
\[    \ln \rho(x) + \varphi(u) +  \ln \ln  u_1    \]
achieves its maximum at $x^0$, we have at $x^0$,
\begin{equation} \label{eq3.1}
\frac{{\rho}_i}{\rho} + \varphi'(u) u_i + \frac{u_{1i}}{u_1 \ln u_1} = 0,
\end{equation}

\vspace{2mm}

\begin{equation} \label{eq3.2}
\begin{aligned}
& \frac{G^{ij} \rho_{ij}}{\rho} - \frac{G^{ij} \rho_i \rho_j}{\rho^2}  +  \varphi'(u) G^{ij} u_{ij} + \varphi''(u) G^{ij} u_i u_j \\
& + \frac{G^{ij} u_{1ij}}{u_1 \ln u_1} - \frac{\ln u_1 + 1}{(u_1 \ln u_1)^2} G^{ij} u_{1i} u_{1j} \leq 0.
\end{aligned}
\end{equation}

By Lemma \ref{Lemma1}, \eqref{eq3.4}, \eqref{eq3.5}, we can compute
\begin{equation}  \label{eq3.6}
 - G^s u_{s1} - G_u u_1
=   F^{ij} b_{ij} + \Big( \frac{u_1 u_{11}}{w^2} - \frac{u_1}{u} \Big) \psi  + \frac{u_1}{u w} \sum f_i,
\end{equation}
where
\begin{equation}   \label{eq3.7}
b_{ij} = b_{ji} = \left\{ \begin{aligned}
&  \frac{2 u u_1}{w^5} u_{11}^2 + \frac{2 u u_1}{w^3 (1 + w)} \sum_{k > 1} u_{1 k}^2, \quad i = j = 1, \\
& \frac{u u_1 (1 + 2 w)}{w^4 (1 + w)} u_{11} u_{1j} + \frac{u u_1}{w^2 (1 + w)} u_{1j} u_{jj}, \quad i = 1,\,\, j > 1,  \\
& \frac{2 u u_1}{w^2 (1 + w)} u_{1i} u_{1j},  \quad i,\,\, j > 1.
\end{aligned} \right.
\end{equation}

Combining \eqref{eq2.4}, \eqref{eq3.6} and \eqref{eq3.7} yields,
\begin{equation} \label{eq3.8}
\begin{aligned}
& \frac{G^{ij} u_{1ij}}{u_1 \ln u_1} - \frac{\ln u_1 + 1}{(u_1 \ln u_1)^2} G^{ij} u_{1i} u_{1j}  \\
\geq\, & F^{11} \Big( \frac{2 u}{w^5 \ln u_1} - \frac{u}{w^3} \frac{\ln u_1 + 1}{(u_1 \ln u_1)^2} \Big) u_{11}^2 + \frac{2 u}{w^2 (1 + w) \ln u_1} \sum\limits_{j > 1} F^{1j} u_{1j} u_{jj}\\
& + \sum\limits_{j > 1} F^{1j} u_{11} u_{1j} \frac{2 u}{w^2 (w + 1) \ln u_1} \Big( \frac{1 + 2 w}{w^2} - \frac{\ln u_1 + 1}{(w - 1) \ln u_1} \Big) \\
& + \frac{1}{u w \ln u_1} \sum f_i + \frac{\psi_{x_1} + \psi_u u_1}{u_1 \ln u_1} + \Big( \frac{u_1 u_{11}}{w^2} - \frac{u_1}{u} \Big) \frac{\psi}{u_1 \ln u_1}
\end{aligned}
\end{equation}
when $u_1$ is sufficiently large.

From \eqref{eq3.1}, we have
\begin{equation}  \label{eq3.16}
\frac{u_{11}}{u_1 \ln u_1} = - \frac{\rho_1}{\rho} - \varphi'(u) u_1.
\end{equation}
We may assume $\vert\frac{\rho_1}{\rho}\vert \leq \frac{1}{2} \varphi'(u) u_1$, for otherwise, we are done. Then
\begin{equation} \label{eq3.10}
u_{11} \leq - \frac{1}{2} \varphi' u_1^2 \ln u_1 < 0.
\end{equation}

Also, note that for $j = 2, \ldots, n$,
\[ F^{1j} = - \frac{1}{k} \sigma_k^{\frac{1}{k} - 1} a_{j1} \sigma_{k - 2} ( a_{22}, \ldots, a_{nn} \vert a_{jj} ). \]
Therefore, in view of \eqref{eq3.5},
\begin{equation} \label{eq3.12}
F^{1j} u_{1j} \leq 0, \quad \quad j = 2, \ldots, n.
\end{equation}

Denote $J = \{ 2 \leq j \leq n \,\vert \,  u_{jj} \geq 0 \}$.
By \eqref{eq3.10} and \eqref{eq3.12}, when $u_1$ is sufficiently large, \eqref{eq3.8} reduces to
\begin{equation} \label{eq3.9}
\begin{aligned}
& \frac{G^{ij} u_{1ij}}{u_1 \ln u_1} - \frac{\ln u_1 + 1}{(u_1 \ln u_1)^2} G^{ij} u_{1i} u_{1j}  \\
\geq \, & \frac{u}{2 w^5 \ln u_1} F^{11} u_{11}^2 + \frac{2 u}{w^2 (1 + w) \ln u_1} \sum\limits_{j \in J} F^{1j} u_{1j} u_{jj} \\
& + \frac{1}{u w \ln u_1} \sum f_i + \frac{\psi_{x_1} + \psi_u u_1}{u_1 \ln u_1} + \Big( \frac{u_1 u_{11}}{w^2} - \frac{u_1}{u} \Big) \frac{\psi}{u_1 \ln u_1}.
\end{aligned}
\end{equation}

By \eqref{eq3.5} and \eqref{eq3.10}, we further obtain
\[ a_{11} =  \frac{1}{w} \Big( 1 + \frac{u u_{11}}{w^2} \Big) \leq  \frac{1}{w} \Big( 1 - \frac{u  \varphi' u_1^2 \ln u_1}{2 w^2} \Big) < 0  \]
as $u_1$ is sufficiently large. It follows that
\begin{equation} \label{eq3.13}
\begin{aligned}
& F^{11} =    \frac{1}{k} \sigma_k^{\frac{1}{k} - 1}  \sigma_{k-1}(a_{22}, \ldots, a_{nn})  \\
=  & \frac{1}{k} \sigma_k^{\frac{1}{k} - 1} \Big(\sigma_{k-1} + \sum_{j = 2}^n a_{1j}^2 \sigma_{k - 3}(a_{22}, \ldots, a_{nn} | a_{jj})
 - a_{11} \sigma_{k-2}(a_{22}, \ldots, a_{nn}) \Big)  \\
\geq &   \frac{1}{k} \sigma_k^{\frac{1}{k} - 1} \sigma_{k-1}.
\end{aligned}
\end{equation}

For $j \in J$, by \eqref{eq3.5},
\begin{equation} \label{eq3.14}
\begin{aligned}
& F^{1j} u_{1j} u_{jj} =  - \frac{1}{k} \sigma_k^{\frac{1}{k} - 1} a_{1j} \sigma_{k - 2} ( a_{22}, \ldots, a_{nn} \vert a_{jj} ) u_{1j} u_{jj}  \\
= &  - \frac{1}{k} \sigma_k^{\frac{1}{k} - 1} \frac{u u_{1j}}{w^2} \sigma_{k - 2} ( a_{22}, \ldots, a_{nn} \vert a_{jj} ) u_{1j} \frac{w a_{jj} - 1}{u} \\
\geq &  - \sigma_k^{\frac{1}{k} - 1} \frac{u_{1j}^2}{w} C(n, k) a_{22} \cdots a_{kk}
+ \frac{1}{k} \sigma_k^{\frac{1}{k} - 1} \frac{u_{1j}^2}{w^2} \sigma_{k - 2} ( a_{22}, \ldots, a_{nn} \vert a_{jj} )  \\
\geq & - \frac{u_{1j}^2}{w}  C(n, k) F^{11},
\end{aligned}
\end{equation}
where in the last line, we have applied
$\sigma_{k - 1} (a_{22}, \ldots, a_{nn}) \geq  a_{22} \cdots a_{kk}$ (see formula (19) in \cite{Lin-Trudinger94-1}).

Also by \eqref{eq3.1}, we have
\begin{equation} \label{eq3.11}
u_{1j} = - u_1 \ln u_1 \frac{\rho_j}{\rho}, \quad \quad j = 2, \ldots, n.
\end{equation}

By \eqref{eq3.10}, \eqref{eq3.14} and \eqref{eq3.11},  the inequality \eqref{eq3.9} reduces to
\begin{equation} \label{eq3.15}
\begin{aligned}
& \frac{G^{ij} u_{1ij}}{u_1 \ln u_1} - \frac{\ln u_1 + 1}{(u_1 \ln u_1)^2} G^{ij} u_{1i} u_{1j}  \\
\geq \, & \frac{u}{8 w^5} F^{11} \varphi'^2 u_1^4  \ln u_1 - \frac{C(n, k) u |D \rho|^2 u_1^2 \ln u_1}{\rho^2 w^3 (1 + w)} F^{11} \\
& + \frac{1}{u w \ln u_1} \sum f_i + \frac{\psi_{x_1} + \psi_u u_1}{u_1 \ln u_1} + \Big( \frac{u_1 u_{11}}{w^2} - \frac{u_1}{u} \Big) \frac{\psi}{u_1 \ln u_1}.
\end{aligned}
\end{equation}
We may assume that
\[  \frac{\varphi'^2 u_1^2}{16 w^2}    \geq \frac{C(n, k)  |D \rho|^2}{\rho^2  (1 + w)},   \]
for otherwise we are done.  Also in view of \eqref{eq3.16}, inequality \eqref{eq3.15} further reduces to
\begin{equation} \label{eq3.17}
\begin{aligned}
& \frac{G^{ij} u_{1ij}}{u_1 \ln u_1} - \frac{\ln u_1 + 1}{(u_1 \ln u_1)^2} G^{ij} u_{1i} u_{1j}  \\
\geq \, & \frac{u_1^4 \ln u_1}{16 u w^5} F^{11}
+ \frac{\sum f_i}{u w \ln u_1} + \frac{\psi_{x_1} + \psi_u u_1}{u_1 \ln u_1} - \frac{\rho_1 u_1 \psi}{\rho w^2} - \frac{u_1^2 \psi}{u w^2} - \frac{\psi}{u \ln u_1}.
\end{aligned}
\end{equation}

For the rest terms in \eqref{eq3.2}, by Lemma \ref{Lemma1} and \eqref{eq3.4} we have
\begin{equation} \label{eq3.18}
\begin{aligned}
& \frac{G^{ij} \rho_{ij}}{\rho} - \frac{G^{ij} \rho_i \rho_j}{\rho^2} = G^{ij} \Big( - \frac{4 \delta_{ij} (r^2 - |x|^2)}{\rho} - \frac{8 x_i x_j}{\rho} \Big) \\ \geq & - \frac{8 r^2 \sum G^{ii}}{\rho} =  - \frac{8 r^2 u}{\rho w} \Big( \frac{1}{w^2} F^{11} + \sum\limits_{i > 1} F^{ii} \Big) \geq  - \frac{8 r^2 u}{\rho w} \sum F^{ii},
\end{aligned}
\end{equation}
and
\begin{equation} \label{eq3.19}
\begin{aligned}
\varphi'(u) G^{ij} u_{ij} + \varphi''(u) G^{ij} u_i u_j
= \, & \varphi' \big( \psi - \frac{1}{w} \sum F^{ii} \big) + \varphi'' \frac{u}{w^3} F^{11} u_1^2 \\
= \, & \frac{1}{u} \big( \psi - \frac{1}{w} \sum F^{ii} \big) - \frac{1}{u w^3} F^{11} u_1^2.
\end{aligned}
\end{equation}
Taking \eqref{eq3.17}--\eqref{eq3.19} into \eqref{eq3.2} yields,
\begin{equation} \label{eq3.20}
\begin{aligned}
& \Big( \frac{u_1^4 \ln u_1}{16 u w^5} - \frac{u_1^2}{u w^3} \Big) F^{11} - \Big( \frac{1}{u w} + \frac{8 r^2 u}{\rho w} - \frac{1}{u w \ln u_1} \Big) \sum F^{ii} \\
& - \frac{\rho_1 u_1 \psi}{\rho w^2} - \frac{C}{u_1 \ln u_1} + \frac{\psi}{u w^2} + \Big( \psi_u - \frac{\psi}{u} \Big) \frac{1}{\ln u_1} \leq 0.
\end{aligned}
\end{equation}
By \eqref{eq3.13} and Newton-Maclaurin inequality,
\[ c(n, k) \leq \sum F^{ii} = \frac{n - k + 1}{k} \sigma_k^{\frac{1}{k} - 1} \sigma_{k - 1} \leq (n - k + 1) F^{11}, \]
where $c(n, k)$ is a positive constant. Therefore by assumption \eqref{eq1-2}, we can deduce $\rho \ln u_1 \leq C$ from \eqref{eq3.20}.

\begin{rem}
In \cite{Weng2019}, Weng also derived the interior gradient estimate. Our test function is slightly different from Weng and the resulting estimate depends on $n$, $k$, $r$, $\Vert u \Vert_{C^0( B_r )}$ and $\Vert \psi \Vert_{C^1(B_r)}$. Our calculation may be easier.
\end{rem}

\vspace{2mm}

\subsection{A remark on second order interior estimates}~

\vspace{2mm}

In \cite{Sui2019}, we generalized Guan-Qiu's interior curvature estimate for convex solutions to prescribed scalar curvature equations to hyperbolic space. However, for $k \geq 3$, there is no such estimate. Hence it is natural to think of Pogorelov type interior curvature estimate. In this subsection, we first formulate some possible domains on which we wish to establish Pogorelov interior curvature estimate, but then we observe an obstruction.

For $0 < \epsilon \leq \epsilon_0$, define
\[ \Omega^{\epsilon}_{\epsilon_0} := \{ x\in \Omega_{\epsilon} \,\big\vert\, u^{\epsilon}(x) > \epsilon_0 \}. \]
It is easy to check the following properties of $\Omega^{\epsilon}_{\epsilon_0}$.

\vspace{2mm}

\begin{prop}~ \label{Prop1}
Under the assumptions of Theorem \ref{Theorem1},
\begin{enumerate} [(a)]
  \item For $0 < \epsilon < \epsilon_1 < \epsilon_2$, we have $\Omega_{\epsilon_2}^{\epsilon} \subset \Omega_{\epsilon_1}^{\epsilon}$;
  \item For $0 < \epsilon_1 < \epsilon_2 < \epsilon_0$, we have $\Omega_{\epsilon_0}^{\epsilon_2} \subset \Omega_{\epsilon_0}^{\epsilon_1}$;
  \item For any $\epsilon > 0$, we have $\Omega_{\epsilon}^{\epsilon} = \Omega_{\epsilon}$;
  \item For any $0 < \epsilon < \epsilon_0$, we have $\Omega_{\epsilon_0} \subset \Omega_{\epsilon_0}^{\epsilon} \subset \Omega_{\epsilon}$;
  \item For any $0 < \epsilon < \epsilon_0$, we have $u^{\epsilon} = \epsilon_0$ on $\partial \Omega_{\epsilon_0}^{\epsilon}$.
\end{enumerate}
\end{prop}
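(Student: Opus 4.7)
The plan is to verify each of (a)--(e) directly from the definition $\Omega^{\epsilon}_{\epsilon_0}=\{x\in\Omega_{\epsilon}\mid u^{\epsilon}(x)>\epsilon_0\}$, combined with two inputs already available in the paper: the fact that $\underline{u}>\epsilon$ precisely on $\Omega_{\epsilon}$ (with $\underline{u}=\epsilon$ on $\Gamma_{\epsilon}$), and the monotonicity corollary $u^{\epsilon_1}\geq u^{\epsilon_2}$ in $\Omega_{\epsilon_2}$ for $\epsilon_1<\epsilon_2$. None of the five statements appears to require new estimates; they are set-theoretic consequences of the comparison results already established in Section 5.

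First I would dispose of (a) purely from the definition: if $x\in\Omega_{\epsilon_2}^{\epsilon}$ then $u^{\epsilon}(x)>\epsilon_2>\epsilon_1$, so $x\in\Omega_{\epsilon_1}^{\epsilon}$. For (c) I would use that any admissible solution $u^{\epsilon}\geq\underline{u}$ to \eqref{eqn10} satisfies $u^{\epsilon}(x)\geq\underline{u}(x)>\epsilon$ at every $x\in\Omega_{\epsilon}$ (the last strict inequality being the defining property of $\Omega_{\epsilon}$). Hence $\Omega_{\epsilon}\subset\Omega_{\epsilon}^{\epsilon}$, and the reverse inclusion is automatic from $\Omega_{\epsilon}^{\epsilon}\subset\Omega_{\epsilon}$. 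For (d), the inclusion $\Omega_{\epsilon_0}^{\epsilon}\subset\Omega_{\epsilon}$ is the definition, while $\Omega_{\epsilon_0}\subset\Omega_{\epsilon_0}^{\epsilon}$ follows because $x\in\Omega_{\epsilon_0}$ gives $\underline{u}(x)>\epsilon_0>\epsilon$ (so $x\in\Omega_{\epsilon}$) and then $u^{\epsilon}(x)\geq\underline{u}(x)>\epsilon_0$.

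The only step requiring more than bookkeeping is (b), which is where I would invoke the monotonicity corollary. Given $x\in\Omega_{\epsilon_0}^{\epsilon_2}$, one has $x\in\Omega_{\epsilon_2}\subset\Omega_{\epsilon_1}$ (nestedness of the $\underline{u}$-level sets) and $u^{\epsilon_2}(x)>\epsilon_0$; applying $u^{\epsilon_1}\geq u^{\epsilon_2}$ in $\Omega_{\epsilon_2}$ from the corollary at the end of Section 5 then yields $u^{\epsilon_1}(x)\geq u^{\epsilon_2}(x)>\epsilon_0$, so $x\in\Omega_{\epsilon_0}^{\epsilon_1}$.

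For (e), I would argue by continuity together with a separation remark. Since $u^{\epsilon}=\epsilon<\epsilon_0$ on $\Gamma_{\epsilon}$, continuity of $u^{\epsilon}\in C^0(\overline{\Omega_{\epsilon}})$ gives a neighborhood $U$ of $\Gamma_{\epsilon}$ in $\overline{\Omega_{\epsilon}}$ on which $u^{\epsilon}<\epsilon_0$; hence $U\cap\overline{\Omega_{\epsilon_0}^{\epsilon}}=\emptyset$ and in particular $\partial\Omega_{\epsilon_0}^{\epsilon}\subset\Omega_{\epsilon}$. On such an interior boundary point $x$ one cannot have $u^{\epsilon}(x)>\epsilon_0$ (else $x$ would be an interior point of $\Omega_{\epsilon_0}^{\epsilon}$) nor $u^{\epsilon}(x)<\epsilon_0$ (else a neighborhood of $x$ would be disjoint from $\Omega_{\epsilon_0}^{\epsilon}$, contradicting $x\in\partial\Omega_{\epsilon_0}^{\epsilon}$), so $u^{\epsilon}(x)=\epsilon_0$. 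I do not expect any real obstacle here; the one point to be careful with is confirming in (e) that $\Gamma_{\epsilon}$ is disjoint from $\overline{\Omega_{\epsilon_0}^{\epsilon}}$, which is exactly what the continuity argument provides.
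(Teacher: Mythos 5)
Your proof is correct, and it uses exactly the ingredients the paper intends (the definitions of $\Omega_\epsilon$ and $\Omega_{\epsilon_0}^\epsilon$, the fact that $u^\epsilon \geq \underline{u}$, and the monotonicity corollary $u^{\epsilon_1}\geq u^{\epsilon_2}$ on $\Omega_{\epsilon_2}$ from the end of Section 5); the paper simply states ``It is easy to check'' and omits the verification, which matches your assessment that these are set-theoretic consequences of results already in hand. Your treatment of (e) --- separating $\overline{\Omega_{\epsilon_0}^\epsilon}$ from $\Gamma_\epsilon$ by continuity and then trichotomizing $u^\epsilon(x)$ at an interior boundary point --- is the right level of care for the one item that isn't pure bookkeeping.
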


In order to find a domain containing all $\Omega_{\epsilon_0}^{\epsilon}$ for sufficiently small $\epsilon$ which also stays away from $\Gamma$, we want to find a supersolution to the asymptotic Plateau problem \eqref{eqn9} and utilize its level set.

Note that by Newton-Maclaurin inequality,
\[  \sigma_1 (\kappa [\underline{u}]) \geq \sigma_k^{1/k} ( \kappa [\underline{u}] ) \geq \psi(x, \underline{u}) \quad \mbox{in} \,\, \Omega. \]
Thus, $\underline{u}$ is a subsolution to the mean curvature equation
\begin{equation} \label{eq7-2}
\left\{ \begin{aligned}
\sigma_1 ( \kappa [ u ] ) =  & \, \psi(x, u) \quad & \mbox{in} \,\, \Omega, \\
u  = & \,  0 \quad & \mbox{on} \,\, \Gamma.
\end{aligned} \right.
\end{equation}
By the estimates in the previous sections, we can find a unique smooth solution $\overline{u} \geq \underline{u}$ to \eqref{eq7-2}. Again by Newton-Maclaurin inequality, we have
\[ \sigma_k^{1/k} ( \kappa [\overline{u}] ) \leq \sigma_1 ( \kappa [ \overline{u} ] ) =   \psi(x, \overline{u}) \]
or $\kappa [ \overline{u} ] \notin \Gamma_k$,
which means $\overline{u}$ is a smooth supersolution to \eqref{eqn9}.

Now for $\epsilon_0 > 0$, define
\[  \hat{\Omega}_{\epsilon_0} := \{ x \in \Omega \,\big\vert\, \overline{u}(x) \geq \epsilon_0 \}.  \]

\vspace{2mm}

\begin{prop} \label{Prop2}~
Under the assumptions of Theorem \ref{Theorem1}, denote
$\delta_{\epsilon_0} = \min\limits_{\hat{\Omega}_{\epsilon_0}} \underline{u}$.
\begin{enumerate} [(a)]
  \item $0 < \delta_{\epsilon_0} \leq \epsilon_0$;
  \item For $0 < \epsilon < \delta_{\epsilon_0}$, we have
$\Omega_{\epsilon_0}^{\epsilon} \subset \hat{\Omega}_{\epsilon_0} \subset \overline{\Omega_{\delta_{\epsilon_0}}}$.
\end{enumerate}
\end{prop}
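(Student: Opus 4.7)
Since $\overline{u}\in C(\overline{\Omega})$ vanishes on $\Gamma$, the set $\hat{\Omega}_{\epsilon_0}=\{\overline{u}\geq\epsilon_0\}$ is closed in $\overline{\Omega}$ and disjoint from $\Gamma$, hence a compact subset of $\Omega$. Meanwhile $\underline{u}$, being admissible with $\underline{u}|_\Gamma=0$, is positive throughout $\Omega$ (as follows from the interior ball-barrier argument behind Lemma \ref{lemma5-1} and Remark \ref{Remark1}, applied to the graph of $\underline{u}$). A continuous positive function on a compact set attains a positive minimum, so $\delta_{\epsilon_0}>0$. For the upper bound, take any $x_0$ in the nonempty level set $\partial\hat{\Omega}_{\epsilon_0}=\{\overline{u}=\epsilon_0\}$; the ordering $\underline{u}\leq\overline{u}$ (which comes from $\overline{u}\geq\underline{u}$ by construction) gives $\delta_{\epsilon_0}\leq\underline{u}(x_0)\leq\overline{u}(x_0)=\epsilon_0$.

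\textbf{Part (b), the easy inclusion.} For $x\in\hat{\Omega}_{\epsilon_0}$, the minimality defining $\delta_{\epsilon_0}$ gives $\underline{u}(x)\geq\delta_{\epsilon_0}$, so $x\in\{\underline{u}\geq\delta_{\epsilon_0}\}=\overline{\Omega_{\delta_{\epsilon_0}}}$, proving the second inclusion.

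\textbf{Part (b), the main inclusion.} The plan for $\Omega_{\epsilon_0}^{\epsilon}\subset\hat{\Omega}_{\epsilon_0}$ is to establish the pointwise comparison $u^{\epsilon}\leq\overline{u}$ on $\Omega_{\epsilon}$ by viewing both functions through the mean curvature equation $\sigma_1(\kappa)=\psi$ that was used to build $\overline{u}$ in the proof of Proposition \ref{Prop1}. Because $u^{\epsilon}$ is $\Gamma_k$-admissible, the Newton--Maclaurin inequality gives $\sigma_1(\kappa[u^{\epsilon}])\geq\sigma_k^{1/k}(\kappa[u^{\epsilon}])=\psi(x,u^{\epsilon})$, so $u^{\epsilon}$ is an admissible subsolution of the mean curvature equation on $\Omega_{\epsilon}$, while $\overline{u}$ is its admissible solution. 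On $\Gamma_{\epsilon}$ one has $u^{\epsilon}=\epsilon=\underline{u}\leq\overline{u}$. Since condition \eqref{eq1-2} is a property of $\psi$ alone and is therefore preserved, the comparison principle Theorem \ref{TheoremComparison} (applied with $k=1$) yields $u^{\epsilon}\leq\overline{u}$ throughout $\Omega_{\epsilon}$. Consequently $x\in\Omega_{\epsilon_0}^{\epsilon}$ forces $\overline{u}(x)\geq u^{\epsilon}(x)>\epsilon_0$, i.e.\ $x\in\hat{\Omega}_{\epsilon_0}$. The hypothesis $\epsilon<\delta_{\epsilon_0}$ is used only to guarantee $\overline{\Omega_{\delta_{\epsilon_0}}}\subset\Omega_{\epsilon}$, keeping the entire chain of inclusions inside the domain where $u^{\epsilon}$ is defined.

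\textbf{Main obstacle.} The delicate step is the comparison $u^{\epsilon}\leq\overline{u}$: Theorem \ref{TheoremComparison} is stated for sub- and supersolutions of the \emph{same} curvature equation, whereas $u^{\epsilon}$ and $\overline{u}$ satisfy equations of different orders. Newton--Maclaurin is exactly the bridge that demotes the $\Gamma_k$-solution $u^{\epsilon}$ to a $\Gamma_1$-subsolution of the mean curvature equation, after which the comparison principle applies directly. A minor point one must still check is that $\partial\hat{\Omega}_{\epsilon_0}$ is nonempty and that $\hat{\Omega}_{\epsilon_0}\subset\Omega$ (not touching $\Gamma$), both of which follow from continuity of $\overline{u}$ up to $\Gamma$ and from $\overline{u}|_\Gamma=0$.
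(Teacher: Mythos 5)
The paper states Proposition \ref{Prop2} without proof, so there is no written argument to compare against; however, your reconstruction is correct and is evidently the route the authors intend, since the whole point of constructing $\overline{u}$ in the lines just before the proposition is to produce a supersolution that dominates every $u^{\epsilon}$. Your key step — demoting the $\Gamma_k$-admissible solution $u^{\epsilon}$ to a $\Gamma_1$-admissible subsolution of the mean curvature equation via Newton--Maclaurin, then invoking Theorem \ref{TheoremComparison} with $f=\sigma_1$ (whose proof relies only on concavity, degree-one homogeneity, and ellipticity of $f$, all of which $\sigma_1$ satisfies) — is exactly the right bridge, and the boundary comparison $\overline{u}\geq\underline{u}=\epsilon=u^\epsilon$ on $\Gamma_\epsilon$ is correctly used. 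Two small remarks: the positivity $\underline{u}>0$ in $\Omega$ is immediate from the requirement that the graph of $\underline{u}$ lie in $\mathbb{H}^{n+1}=\{x_{n+1}>0\}$, so no barrier argument is needed there; and the identification $\overline{\Omega_{\delta_{\epsilon_0}}}=\{\underline{u}\geq\delta_{\epsilon_0}\}$ relies on the standing assumption that the level sets of $\underline{u}$ are regular for small levels, which is available here since $\delta_{\epsilon_0}\leq\epsilon_0$ can be taken small.
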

By Proposition \ref{Prop1}, \ref{Prop2} and estimate \eqref{eq4-1}, we have
\[ \overline{\Omega_{\epsilon_0}} \subset \Omega_{\epsilon_0 / 2} \subset \Omega_{\epsilon_0 / 2}^{\epsilon} \subset  \overline{\Omega_{\delta_{\epsilon_0/2}}} \]
and
\[ \Vert u^{\epsilon} \Vert_{C^1(\overline{\Omega_{\delta_{\epsilon_0/2}}})} \leq C, \quad \forall \,\, 0 < \epsilon < \frac{1}{2} \delta_{\epsilon_0/2}. \]
Thus we wish to establish Pogorelov type interior curvature estimate
\begin{equation} \label{eq7-5}
\big| \kappa_i [ u^{\epsilon}] (x) \big| \,\leq \, \frac{C}{(u^{\epsilon} - \frac{\epsilon_0}{2})^b}, \quad \forall \,\, x \in \Omega_{\epsilon_0 / 2}^{\epsilon}, \quad \forall \,\, 0 < \epsilon < \frac{1}{2} {\delta}_{\epsilon_0 / 2}
\end{equation}
for some positive constants $b$ and $C$ independent of $\epsilon$ (may depend on $\epsilon_0$), because then
we would obtain uniform $C^2$ bound
\[ \max\limits_{\overline{\Omega_{\epsilon_0}}} \big| \kappa_i [ u^{\epsilon}] \big|\,\leq \, C, \quad \forall \,\, 0 < \epsilon <  \frac{1}{2} {\delta}_{\epsilon_0 / 2}, \]
which would imply the existence of a smooth solution to asymptotic Plateau problem \eqref{eqn9}.

However, it is impossible to establish interior Pogorelov type estimate \eqref{eq7-5}.
In fact, by \eqref{eqC-3}, \eqref{eq0-2}, \eqref{eq2G-13} and \eqref{eq0-3}, we have
\[ \begin{aligned}
 b \frac{F^{ii} \nabla_{ii} u}{u - \frac{\epsilon_0}{2}} - b F^{ii} \frac{u_i^2}{( u - \frac{\epsilon_0}{2} )^2}
 = \frac{b u (u - \epsilon_0)}{ (u - \frac{\epsilon_0}{2})^2} \sum f_i \frac{u_i^2}{u^2} + \frac{b \psi \nu^{n+1} u}{u - \frac{\epsilon_0}{2}} - \frac{b  u }{u - \frac{\epsilon_0}{2}} \sum F^{ii}.
\end{aligned} \]
Because of the term $- \frac{b  u }{u - \frac{\epsilon_0}{2}} \sum F^{ii}$, we are unable to use Sheng-Urbas-Wang's method \cite{Sheng-Urbas-Wang} to establish the estimate \eqref{eq7-5}. This term comes out due to the ambient space $\mathbb{H}^{n+1}$.

\vspace{4mm}

\section{Viscosity solutions}

\vspace{4mm}

In this section, we verify that $u$ in Theorem \ref{Theorem2} is indeed a viscosity solution of
\begin{equation} \label{eq8-1}
G( D^2 u, D u, u ) = F ( a_{ij} ) =  f( \lambda ( a_{ij} ) ) = \psi( x, u ).
\end{equation}
We first give the definition of viscosity solutions of \eqref{eq8-1}, according to the definitions given by Trudinger \cite{Trudinger90} and Urbas \cite{Ur90} in Euclidean space.
\begin{defn}
A function $0 < u \in C^0(\Omega)$ is a viscosity subsolution of \eqref{eq8-1} in $\Omega$ if for any function $\phi \in C^2(\Omega)$, any $x_0 \in \Omega$ satisfying $u(x_0) = \phi(x_0)$ and $u \leq \phi$ in a neighborhood $\Omega_{x_0} \subset \Omega$ of $x_0$, we have $G(D^2 \phi, D \phi, \phi)(x_0) \geq \psi(x_0, \phi(x_0))$.
A function $0 < u \in C^0(\Omega)$ is a viscosity supersolution of \eqref{eq8-1} in $\Omega$ if for any function $\phi \in C^2(\Omega)$, any $x_0 \in \Omega$ satisfying $u(x_0) = \phi(x_0)$ and $u \geq \phi$ in a neighborhood $\Omega_{x_0} \subset \Omega$ of $x_0$, we have either $\phi$ is not admissible at $x_0$, or $G(D^2 \phi, D \phi, \phi)(x_0) \leq \psi(x_0, \phi(x_0))$.
A function $u$ is a viscosity solution of \eqref{eq8-1} if it is both a viscosity subsolution and supersolution.
\end{defn}

By this definition, we can verify the following fact.
\begin{prop} \label{Prop3}
A function $0 < u \in C^2(\Omega)$ is a viscosity solution of \eqref{eq8-1} if and only if it is an admissible classical solution.
\end{prop}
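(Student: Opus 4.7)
The plan is to prove both implications essentially by inspection, using the explicit formula
\[ a_{ij} \,=\, \frac{1}{w}\bigl(\delta_{ij} + u\,\gamma^{ik} u_{kl}\,\gamma^{lj}\bigr), \]
in which the first-order data $(u, Du)$ enter only through $w$ and $\gamma$ while the Hessian appears solely inside the conjugation $\gamma^{ik} u_{kl} \gamma^{lj}$. The two elementary ingredients are: (i) the G\r arding cone $\Gamma_k$ is preserved under adding a positive semidefinite matrix to an already admissible one, and (ii) $F$ is monotone with respect to the matrix order on the admissible set. Both are standard consequences of $F = \sigma_k^{1/k}$ being concave and monotone on $\Gamma_k$.

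For the ``if'' direction, suppose $u \in C^2(\Omega)$ is admissible with $G[u] = \psi(x, u)$. If $\phi \in C^2$ touches $u$ from above at $x_0$, then $u(x_0) = \phi(x_0)$, $Du(x_0) = D\phi(x_0)$, and $D^2 u(x_0) \leq D^2\phi(x_0)$. Since $u(x_0) > 0$ and conjugation by $\gamma^{ik}(x_0)$ (which depends only on the common first-order data at $x_0$) preserves the matrix order, we deduce $a[u](x_0) \leq a[\phi](x_0)$ as symmetric matrices. Applying (i) shows $a[\phi](x_0)$ is admissible, and by (ii)
\[ G[\phi](x_0) \,=\, F(a[\phi])(x_0) \,\geq\, F(a[u])(x_0) \,=\, \psi(x_0, u(x_0)) \,=\, \psi(x_0, \phi(x_0)), \]
which is the subsolution inequality. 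The supersolution case is symmetric: if $\phi$ touches $u$ from below at $x_0$ and is admissible there, the reversed matrix inequality combined with (ii) yields $G[\phi](x_0) \leq \psi(x_0, \phi(x_0))$; if instead $\phi$ fails to be admissible at $x_0$, the alternative permitted in the definition is trivially satisfied.

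For the ``only if'' direction, the natural test function is $\phi = u$ itself, which touches $u$ from above and from below at every $x_0 \in \Omega$. The subsolution condition requires $G[u](x_0)$ to be defined, forcing $u$ to be admissible throughout $\Omega$, and yields $G[u] \geq \psi(x, u)$. With admissibility now in hand, the ``not admissible'' alternative in the supersolution definition is excluded, leaving $G[u] \leq \psi(x, u)$. Combining gives $G[u] = \psi(x, u)$, so $u$ is an admissible classical solution.

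There is no genuine obstacle; the whole argument is a routine consistency check between \eqref{eq8-1} and its viscosity formulation. The one point worth flagging is the implicit admissibility clause in the subsolution inequality (so that $F$ makes sense), which is precisely what makes $\phi = u$ force $u$ into the admissible cone in the forward direction and mirrors the asymmetric viscosity theory of Trudinger and Urbas in $\mathbb{R}^{n+1}$.
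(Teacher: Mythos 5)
The ``if'' direction of your argument (admissible classical solution $\Rightarrow$ viscosity solution) is correct and runs along the same lines as the paper: touching from above gives $a[\phi](x_0) \geq a[u](x_0)$ (the first-order data coincide, so $w$, $\gamma$ are the same and $u(x_0)>0$), and then monotonicity of $F$ together with $\Gamma_k + \{\text{PSD}\} \subset \Gamma_k$ gives the subsolution inequality; the supersolution case is symmetric.

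The ``only if'' direction has a real gap. You assert that plugging $\phi = u$ into the subsolution condition ``requires $G[u](x_0)$ to be defined, forcing $u$ to be admissible,'' but this reads an admissibility requirement into the definition rather than proving it. As the definition is used throughout the paper (and as is standard in Trudinger--Urbas viscosity theory), the subsolution inequality is only imposed for test functions $\phi$ at which $G[\phi]$ actually makes sense; if $\phi = u$ is not admissible at $x_0$, the condition gives you nothing. So admissibility of a $C^2$ viscosity subsolution is a genuine theorem, not a bookkeeping consequence of the definition, and it is precisely the nontrivial part of Proposition \ref{Prop3}. The paper proves it by contradiction: assuming $u$ is not admissible at some $x_0$, one picks the threshold $\alpha_0 \geq 0$ for which $\lambda\bigl(\tfrac{1}{w}(\delta_{ij} + u\gamma^{ik}(u_{kl}+\alpha_0\delta_{kl})\gamma^{lj})\bigr)(x_0) \in \partial\Gamma_k$, builds the admissible quadratic test function $\phi(x) = u(x_0) + Du(x_0)\cdot(x-x_0) + \tfrac12 (x-x_0)(D^2u(x_0) + \alpha I)(x-x_0)^T$ touching from above for each $\alpha > \alpha_0$, invokes the subsolution inequality to get $G(D^2u+\alpha I, Du, u)(x_0) \geq \psi(x_0, u(x_0))$, and then lets $\alpha \searrow \alpha_0$; since $\sigma_k^{1/k}$ extends continuously to $\overline{\Gamma_k}$ and vanishes on $\partial\Gamma_k$, the left side tends to $0$, contradicting $\psi > 0$. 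This argument uses both $\psi > 0$ and the degeneracy of $\sigma_k$ on $\partial\Gamma_k$ in an essential way, and you cannot bypass it by taking $\phi = u$.
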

\begin{proof}
First, let $0< u \in C^2(\Omega)$ be a viscosity solution of \eqref{eq8-1}. We claim that $u$ is admissible in $\Omega$. Suppose not, say $u$ is not admissible at some $x_0 \in \Omega$. There exists a unique $\alpha_0 \geq 0$ such that
\begin{equation} \label{eq8-2}
\lambda\Big(  \frac{1}{w} \big( \delta_{ij} + u \gamma^{ik} (u_{kl} + \alpha_0 \delta_{kl}) \gamma^{lj} \big) \Big)(x_0) \in \partial\Gamma_k
\end{equation}
and
\[ \lambda\Big(  \frac{1}{w} \big( \delta_{ij} + u \gamma^{ik} (u_{kl} + \alpha \delta_{kl}) \gamma^{lj} \big) \Big)(x_0) \in \Gamma_k, \quad \forall \,\,\alpha > \alpha_0. \]
For any $\alpha > \alpha_0$, consider the function
\[ \phi(x) = u(x_0) + D u(x_0) \cdot (x - x_0) + \frac{1}{2}(x - x_0) (D^2 u(x_0) + \alpha I) (x - x_0)^{T}. \]
It is easy to verify that $\phi$ is admissible at $x_0$, $\phi(x_0) = u(x_0)$ and $\phi \geq u$ in a neighborhood of $x_0$.
Since $u$ is a viscosity subsolution of \eqref{eq8-1}, we have
\[  G(D^2 u + \alpha I, D u, u)(x_0) = G(D^2 \phi, D \phi, \phi)(x_0) \geq \psi(x_0, \phi) = \psi(x_0, u). \]
However, as $\alpha \rightarrow \alpha_0$,
\[ G \big(D^2 u + \alpha_0 I, D u, u \big)(x_0) \geq \psi \big(x_0, u(x_0)\big) > 0, \]
contradicting \eqref{eq8-2}. Hence $u$ is admissible in $\Omega$. By definition of viscosity solution, taking $\phi = u$, we can verify that $u$ is a classical solution.

The converse direction can be easily proved by definition of viscosity solution.
\end{proof}

Now we prove a kind of stability result.
\begin{prop}
The solution $u$ in Theorem \ref{Theorem2} is a viscosity solution of \eqref{eq8-1} in $\Omega$.
\end{prop}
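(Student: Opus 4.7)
The plan is to apply the classical stability argument for viscosity solutions, exploiting the locally uniform convergence $u^\epsilon \to u$ on compact subsets of $\Omega$ provided by Theorem \ref{Theorem2} together with the fact that each $u^\epsilon$ is an admissible classical solution of \eqref{eq8-1} on $\Omega_\epsilon$. The structural input that makes everything work is that, in the representation $a_{ij}[v] = \tfrac{1}{w}(\delta_{ij} + v\,\gamma^{ik} v_{kl} \gamma^{lj})$, the matrix $A[v]$ depends monotonically on the Hessian $D^2 v$ (as a symmetric matrix) when $v>0$ and $Dv$ are held fixed; thus a pointwise matrix inequality of Hessians at an extremum point transfers directly to a matrix inequality of the arguments entering $F$.

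For the subsolution property, I would take $\phi \in C^2(\Omega)$ and $x_0 \in \Omega$ with $u(x_0)=\phi(x_0)$ and $u\leq\phi$ on some ball $B_r(x_0)\subset\Omega$, and introduce the perturbation
\[ \phi_\eta(x) = \phi(x) + \eta\bigl(|x-x_0|^2 - r^2/2\bigr), \qquad \eta>0, \]
so that $u-\phi_\eta$ is strictly positive at $x_0$ and strictly negative on $\partial B_r(x_0)$. Locally uniform convergence then forces $u^\epsilon-\phi_\eta$ to attain an interior maximum at some $x_\epsilon\in B_r(x_0)$ with $x_\epsilon\to x_0$ as $\epsilon\to 0$, at which $Du^\epsilon(x_\epsilon)=D\phi_\eta(x_\epsilon)$ and $D^2u^\epsilon(x_\epsilon)\leq D^2\phi_\eta(x_\epsilon)$. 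Setting
\[ A^*_\epsilon \,:=\, \tfrac{1}{w(D\phi_\eta(x_\epsilon))}\bigl(I + u^\epsilon(x_\epsilon)\,\gamma\,D^2\phi_\eta(x_\epsilon)\,\gamma\bigr), \]
one has $A^*_\epsilon \geq A[u^\epsilon](x_\epsilon)$ as symmetric matrices; since $\Gamma_k$ is closed under addition of positive semidefinite matrices, $A^*_\epsilon\in\Gamma_k$, and monotonicity of $F=\sigma_k^{1/k}$ yields $F(A^*_\epsilon)\geq F(A[u^\epsilon](x_\epsilon)) = \psi(x_\epsilon, u^\epsilon(x_\epsilon))$. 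Sending $\epsilon\to 0$ (using $u^\epsilon(x_\epsilon)\to \phi(x_0)$) and then $\eta\to 0$, continuity of $F$ on $\overline{\Gamma_k}$ gives $G(D^2\phi, D\phi,\phi)(x_0)\geq \psi(x_0,\phi(x_0))$, which moreover certifies $A[\phi](x_0)\in\Gamma_k$ a posteriori since $\psi>0$.

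The supersolution property follows from a symmetric construction. Given $\phi\in C^2$ with $u\geq\phi$ near $x_0$, $u(x_0)=\phi(x_0)$, and $\phi$ admissible at $x_0$ (the non-admissible case being vacuous), take $\phi_\eta(x)=\phi(x)-\eta(|x-x_0|^2-r^2/2)$ to produce interior minima $x_\epsilon\to x_0$ of $u^\epsilon-\phi_\eta$ with $D^2u^\epsilon(x_\epsilon)\geq D^2\phi_\eta(x_\epsilon)$. The admissibility $\lambda(A[\phi](x_0))\in\Gamma_k$ combined with continuity and the openness of $\Gamma_k$ ensures that the analogous $A^*_\epsilon$ lies in $\Gamma_k$ for $\epsilon,\eta$ small, so the reversed matrix inequality together with monotonicity of $F$ gives $\psi(x_\epsilon,u^\epsilon(x_\epsilon)) = F(A[u^\epsilon](x_\epsilon))\geq F(A^*_\epsilon)$; the same double limit then produces $\psi(x_0,\phi(x_0))\geq G[\phi](x_0)$.

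The main technical delicacy is bookkeeping around the $u$-dependence of $A[u]$: at the extremum $x_\epsilon$ the value $u^\epsilon(x_\epsilon)$ does not agree with $\phi_\eta(x_\epsilon)$, so the intermediate comparison matrix $A^*_\epsilon$ naturally uses $u^\epsilon(x_\epsilon)$. Uniform convergence, however, guarantees $u^\epsilon(x_\epsilon)\to \phi(x_0)$, so after the two successive limits $A^*_\epsilon\to A[\phi](x_0)$ by continuity and the $u$-dependence is absorbed. Beyond this careful limit tracking I do not anticipate a real obstacle, since locally uniform convergence of admissible approximants together with monotonicity of $G$ in the Hessian is precisely the input the viscosity framework is designed to exploit.
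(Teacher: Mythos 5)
Your proposal is correct and follows the same Lions-type stability argument as the paper: approximate by $u^\epsilon$, transfer the touching point to nearby interior extrema $x_\epsilon\to x_0$ via locally uniform convergence, and pass to the limit. The paper assumes strict touching without the explicit quadratic perturbation and delegates the pointwise inequality at $x_\delta$ to Proposition~\ref{Prop3} (classical admissible $\Leftrightarrow$ viscosity), whereas you introduce $\phi_\eta$ and inline the matrix-monotonicity argument for $F=\sigma_k^{1/k}$ directly; the two implementations are equivalent in substance.
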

\begin{proof}
The proof follows the idea of Lions \cite{Lions83}. First, we show that $u$ is a viscosity subsolution of \eqref{eq8-1} in $\Omega$. For any $\phi \in C^2(\Omega)$ and any $x_0 \in \Omega$ satisfying $u(x_0) = \phi(x_0)$ and $u < \phi$ in a neighborhood $\Omega_{x_0} \setminus \{ x_0 \}$, let $\delta > 0$ be sufficiently small such that $\overline{B_{\delta}(x_0)} \subset \Omega_{x_0}$, where $B_{\delta}(x_0)$ is an open ball centered at $x_0$ with radius $\delta$. Then
\[ \max\limits_{\partial B_{\delta}(x_0)} (u - \phi) < 0.  \]
Since $u^{\epsilon}$ locally uniformly converges to $u$ in $\Omega$, we have
\[  \max\limits_{\overline{B_{\delta}(x_0)}} (u^{\epsilon} - \phi) >  \max\limits_{\partial B_{\delta}(x_0)} (u^{\epsilon} - \phi) \quad  \mbox{with} \quad \overline{B_{\delta}(x_0)} \subset \Omega_{\epsilon}  \]
as $\epsilon$ sufficiently small. We may in addition choose $\epsilon = \epsilon(\delta)$ in such a way that $\epsilon \rightarrow 0^+$ as $\delta \rightarrow 0^+$.
Therefore, there exists $x_{\delta} \in B_{\delta} (x_0)$ such that
\[  \max\limits_{\overline{B_{\delta}(x_0)}} (u^{\epsilon} - \phi) = (u^{\epsilon} - \phi)(x_{\delta}). \]
Since $u^{\epsilon}$ is a classical admissible solution of \eqref{eq8-1} in $\Omega_{\epsilon}$, by Proposition \ref{Prop3}, it is certainly a viscosity solution of \eqref{eq8-1} in $\Omega_{\epsilon}$. Hence
\[ \phi_{\delta} = \phi + \max\limits_{\overline{B_{\delta}(x_0)}} (u^{\epsilon} - \phi) \]
satisfies
\[ G\big(D^2 \phi_{\delta}, D \phi_{\delta}, \phi_{\delta} \big)(x_{\delta}) \geq \psi\big(x_{\delta}, \phi_{\delta}(x_{\delta})\big). \]
Letting $\delta \rightarrow 0^+$, we have $x_{\delta} \rightarrow x_0$ and $u^{\epsilon}(x_{\delta}) \rightarrow u(x_0)$, or equivalently, $\phi_{\delta} (x_{\delta}) \rightarrow \phi(x_0)$. Since $\phi$ is $C^2$, we have $D \phi_{\delta} (x_{\delta}) = D \phi (x_{\delta}) \rightarrow D \phi(x_0)$ and $D^2 \phi_{\delta} (x_{\delta}) = D^2 \phi (x_{\delta}) \rightarrow D^2 \phi (x_0)$ as $\delta \rightarrow 0^+$. Consequently,
\[ G\big(D^2 \phi, D \phi, \phi \big)(x_0) \geq \psi\big(x_0, \phi(x_0)\big). \]
This implies that $u$ is a viscosity subsolution of \eqref{eq8-1} in $\Omega$. Similarly, we can verify that $u$ is a viscosity supersolution.
\end{proof}

\vspace{4mm}

\end{document}